%% IMPORTANT NOTE: Please read carefully all information preceded by % sign
%% This is a Sample File showing the right templates
%% for the AIMS-JMD journal  Prepared in AMS-LaTeX
%% You need to download from http://aimsciences.org
%%   the file "jmd.cls"
%% and then replace "\documentclass{amsart}" with "\documentclass{jmd}"
%% at the very beginning of your amslatex file

\documentclass{jmd}
\usepackage{amscd}
\usepackage[colorlinks]{hyperref}
\hypersetup{linkcolor=blue, citecolor=red}
%\usepackage{graphicx} %% pictures should be in eps format
 %For pictures: screened artwork should be set up with an 85 or 100 line screen

 %% Volume, number, year, firstpage are set here:

\issueinfo[P]{5}{2}{285} % volume, issue, startpage
% To retain copyright use \issueinfo[A]{1}{2}{4}
% To put into public domain use \issueinfo[P]{1}{2}{4}

%% It is required to enter MSC and Keywords.

\subjclass{Primary: 37D25, 37D40; Secondary: 14F05}
\keywords{Teichm\"uller geodesic flow, Kontsevich--Zorich cocycle, square-tiled surfaces}

\date{July, 24, 2010}
\revised{June 1, 2011}
\commby{Rapha\"el Krikorian}

%% Place the the running title of the paper with 50 letters or less in []
 %% and the title of the paper in { }.
\title[Square-tiled cyclic covers]
      {Square-tiled cyclic covers}

%% Place all authors' names in [ ] to be shown as a shorter running head;

\author[Giovanni Forni]{Giovanni Forni}
\address{Department of Mathematics, University of Maryland, College Park, MD 20742-4015, USA}
%\curraddr{Department of Mathematics, CBA University, College City, BB 54321}
\email{gforni@math.umd.edu}
%\urladdr{http://www.math.aaa.edu/smith}
\thanks{GF: Partially supported by the National Science Foundation under grant
DMS 0800673.}

\author[Carlos Matheus]{Carlos Matheus}
\address{Coll\`ege de France, 3 Rue d'Ulm, Paris, CEDEX 05, France}
\curraddr{CNRS, LAGA, Institut Galil\'ee, Universit\'e Paris 13, 99, Av. Jean-Baptiste Cl\'ement, 93430,
Villetaneuse, France}
\email{matheus@impa.br}
\urladdr{http://w3.impa.br/ \~ cmateus}
\thanks{CM: Partially supported by Coll\`ege de France and French ANR (grant 0864 Dynamique dans l'espace de Teichm\"uller).}

\author[Anton Zorich]{Anton Zorich}
\address{IRMAR, Universit\'e de Rennes 1, Campus de Beaulieu, 35042, Rennes, France}
%\curraddr{Department of Mathematics, FED University, College Town, BB 54321}
\email{Anton.Zorich@univ-rennes1.fr.}
\urladdr{http://perso.univ-rennes1.fr/anton.zorich/}
%\thanks{The second author is affiliated with PSU}

 %% Please minimize the usage of "newtheorem", "newcommand", and use
 %% equation numbers only situation when they provide essential convenience
 %% Try to avoid defining your own macros

\newtheorem{theorem}{Theorem}
\newtheorem{corollary}[theorem]{Corollary}

\newtheorem{lemma}[theorem]{Lemma}
\newtheorem{proposition}[theorem]{Proposition}
\newtheorem{conjecture}[theorem]{Conjecture}
\newtheorem*{problem}{Problem}
\theoremstyle{definition}
\newtheorem{definition}[theorem]{Definition}
\newtheorem{remark}[theorem]{Remark}
\newtheorem{example}[theorem]{Example}

\newcommand{\ep}{\varepsilon}
\newcommand{\eps}[1]{{#1}_{\varepsilon}}

\begin{document}

\begin{abstract}
A  cyclic  cover  of  the  complex  projective  line branched at four
appropriate points has a natural structure of a square-tiled surface.
We  describe  the  combinatorics  of such a square-tiled surface, the
geometry  of  the  corresponding Teichm\"uller curve, and compute the
Lyapunov  exponents  of the determinant bundle over the Teichm\"uller
curve  with  respect  to the geodesic flow. This paper includes a new
example  (announced  by G. Forni and C. Matheus in \cite{Forni:Matheus})
of  a Teichm\"uller curve of a square-tiled cyclic cover in a stratum
of  Abelian  differentials  in genus four with a maximally degenerate
Kontsevich--Zorich  spectrum (the only known example found previously
by Forni in genus three also corresponds to a square-tiled cyclic cover \cite{ForniSurvey}).
 We  present  several new examples of Teichm\"uller curves in
strata  of  holomorphic  and meromorphic quadratic differentials with
maximally  degenerate  Kontsevich--Zorich spectrum. Presumably, these
examples  cover  all  possible  Teichm\"uller  curves  with maximally
degenerate spectrum. We prove that this is indeed the case within the
class of square-tiled cyclic covers.
\end{abstract}

%\begin{abstract}
%This fictitious paper is meant to serve as the template for the AIMS
%journal {\sl\JournalOfModernDynamics}.
%\end{abstract}

\maketitle

\section{Introduction}
The  Kontsevich--Zorich  cocycle  is  a dynamical system on the total
space  of  the  Hodge  bundle  over  the  moduli  space of Abelian or
quadratic  differentials.  It  is  a  continuous-time  cocycle in the
standard  sense: it is a flow which acts linearly on the fibers
of  the  bundle.  Its  projection to the moduli space is given by the
Teichm\"uller  geodesic flow. Since the Kontsevich--Zorich cocycle is
closely related to the tangent cocycle of the Teichm\"uller flow, its
Lyapunov  structure determines that of the Teichm\"uller flow and has
implications  for  its dynamics. The Kontsevich--Zorich spectrum also
plays  a  crucial role in applications of \mbox{Teichm\"uller} theory to the
dynamics  of translation flows and interval exchange transformations ---
in  particular,  to  results  on  the  deviation  of  ergodic averages
(see \cite{Zorich1},  \cite{Zorich2},  \cite{Zorich3},  \cite{Kontsevich},
\cite{Forni}),   on  existence  and  nature  of  the limit distributions
(see \cite{Bufetov1},   \cite{Bufetov2}) and on the weak mixing
property (see \cite{Avila:Forni}).

M. Kontsevich and A. Zorich conjectured that the Lyapunov spectrum of
the  cocycle is simple (in particular, that all the exponents are non-zero)
for $\textrm{SL}(2,\mathbb{R})$-invariant canonical absolutely continuous measures on
all connected components of strata  of  the  moduli spaces of Abelian
(and quadratic) holomorphic differentials.  G. Forni \cite{Forni}  proved that,
for the canonical measures on strata of \emph{Abelian differentials}, the
exponents are all non-zero.   A. Avila   and   M. Viana \cite{Avila:Viana}  later
completed  the  proof  of  the  Kontsevich--Zorich conjecture in this
case.  Recently, G. Forni developed his approach  to  give  a
general criterion for the non-vanishing of the Kontsevich--Zorich  exponents
for  $\textrm{SL}(2,\mathbb{R})$-invariant  measures  on the moduli  space  of  Abelian  differentials \cite{ForniCriterion}.
Based on this criterion (and on a standard construction of an orienting double cover), R. Trevi\~ no
\cite{Trevino} proved the nonvanishing of the exponents for all canonical  measures
on  strata  of quadratic differentials. The full Kontsevich--Zorich   conjecture   is  still  open
for  strata  of nonorientable quadratic differentials.

The  aforementioned results lead to the natural questions as to whether or not it
is possible for the Kontsevich--Zorich cocycle to have zero exponents
with respect to other invariant measures. It is well-known to experts (\textit{cf}. \cite{Veech82})
that  it  is  possible  to construct invariant measures \emph{for the
Teichm\"uller  geodesic  flow  }(for  instance, supported on periodic
orbits)  with  maximally degenerate Kontsevich--Zorich spectra, that
is,  with  all  exponents  equal  to  zero  with the exception of the
``trivial'' ones. Answering a question of W. Veech, G. Forni found in
\cite{ForniSurvey}  the  first  example  of a $\textrm{SL}(2,\mathbb{R})$-invariant measure
with a maximally  degenerate  spectrum.  The  example  is given by the
measure  supported  on  the $\textrm{SL}(2,\mathbb{R})$-orbit of a genus three square-tiled
cyclic cover, that is, a branched cover of the four-punctured Riemann
sphere,  endowed with a quadratic differential with four simple poles
at  the  punctures. Any cover of this type is ``parallelogram-tiled''
in  the  sense  that  it is also a branched cover of the torus with a
single  branching  point.  The flat surface in Forni's example, found
independently  by  F. Herrlich,  M. M\"oller  and G. Schmithuesen, is
very  peculiar, very symmetric, and has so many remarkable properties
that       has      been      aptly      named      \emph{Eierlegende
Wollmilchsau} \cite{Herrlich:Schmithuesen}.    Later  G. Forni   and
C. Matheus  announced  in  the preprint \cite{Forni:Matheus} a second
example    of    the    same   kind   in   genus   four   (see   also
\cite{Matheus:Yoccoz}).  The  present article has, in fact, grown out
of that announcement.

M. M\"oller   conjectured in \cite{Moeller} that  these  two  examples  are  the  only
Teichm\"uller  curves  with  maximally  degenerate Kontsevich--Zorich
spectra.  He  was able to prove his conjecture up to a few strata in
genus  five  where  the  arithmetic conditions he derives to rule out
maximally  degenerate  spectra could not be verified \cite{Moeller}.
M\"oller  result  naturally  led  to  the  more general conjecture on
whether  the Teichm\"uller curves of the Eierlegende Wollmilchsau and
the   Forni--Matheus   curve   indeed   give   the  only  examples  of
$\textrm{SL}(2,\mathbb{R})$-invariant  measures  (or  even  of $\textrm{SL}(2,\mathbb{R})$-orbits) with maximally
degenerate   Kontsevich--Zorich   spectra   on   strata  of  Abelian
differentials.  For  sufficiently high genus the conjecture is proved
in \cite{Eskin:Kontsevich:Zorich} for $\textrm{SL}(2,\mathbb{R})$-invariant suborbifolds in
the  moduli space of holomorphic Abelian and quadratic differentials,
as  a  corollary of the key formula for the sum of the
exponents.  Other  results  in  this  direction  for moduli spaces of
holomorphic  (Abelian  or quadratic) differentials in all genera have
been  announced  by  A. Avila  and  M. M\"oller and, independently,
by  D. Aulicino. Similar conjectures  for strata of \emph{meromorphic }quadratic
differentials are at the moment wide open, to the authors' best knowledge.

In  this  paper, we systematically investigate these questions within
the  class of all square-tiled cyclic covers. We remark that the idea
of  the  construction of a square-tiled cyclic cover already appeared
in  \cite{ForniSurvey}  (and  in  \cite{Forni:Matheus}) but only in a
particular  case.  Here we generalize the construction and derive the
main  topological,  geometric,  and  combinatorial  properties  of the
resulting  translation  and  half-translation  surfaces.  We then
classify  all  the  examples  of  Teichm\"uller  curves  derived  from
square-tiled  cyclic  covers  with  maximally  degenerate spectra in
strata   of   Abelian  holomorphic  differentials  and  of  quadratic
holomorphic  and  meromorphic  differentials.  The  main  tool in our
investigation    of    the spectrum of Lyapunov exponents is   the
formula from \cite{Eskin:Kontsevich:Zorich} for the sum
of   the   non-negative  Lyapunov  exponents (that is, for the exponent
of the determinant bundle).  The  formula  takes  a particularly simple,
\emph{explicit} form in the case of square-tiled cyclic  covers.  In fact, our paper
can be considered as a companion to    the    paper   by   \mbox{A. Eskin},   M. Kontsevich
and   A. Zorich \cite{Eskin:Kontsevich:Zorich:cyclic}  in  which  the  authors derive
a completely explicit formula for each individual Lyapunov exponent
of a square-tiled cyclic cover. In a related paper D. Chen \cite{Chen}
computes   the   Lyapunov  exponent  of  the  determinant  bundle for
square-tiled cyclic covers by different methods and relates it to the slope
of the corresponding Teichm\"uller curve.

Finally,   we  remark  that  the  Eierlegende  Wollmilchsau  and  the
Forni--Matheus  example were originally found in
\cite{ForniSurvey}  and  \cite{Forni:Matheus},  respectively, by a completely
different method  based  on  the  analysis of the action of the cyclic group of
deck  transformations  on  the  second  fundamental form of the Hodge
bundle   (related  to  the  Kontsevich--Zorich  spectrum  by  the
variational  formulas of \cite{Forni}, \cite{ForniSurvey}). This symmetry
approach  has  led  us  to  conduct a  systematic investigation of the
spectrum  of the Kontsevich--Zorich cocycle on equivariant subbundles
of   the   Hodge   bundle,   which will appear  in  a  forthcoming
paper \cite{CMFZ-survey}.

\smallskip
\noindent
\textbf{Additional bibliographic remarks.}
Cyclic covers  were already  studied by I. Bouw and M. M\"oller  in
\cite{Bouw:Moeller}  in  a similar context,  but with respect to
completely  different  (not square-tiled) flat  structures. The
papers \cite{Bouw}  of  I. Bouw and \cite{McMullen}  of  \mbox{C. McMullen}
investigate more general cyclic covers, but without any relation to
flat metrics.  The paper \cite{Wright} of A. Wright   studies general
square-tiled  \textit{Abelian}  (versus \textit{cyclic}) covers.

%--------------------------------------------------------------------
\subsection{Reader's guide}

Cyclic  covers  are  defined  in Section \ref{ss:Cyclic:covers}. In Section
 \ref{ss:Square:tiled:four:Cyclic:cover} we introduce a square-tiled flat structure
on any appropriate cyclic cover. A reader interested in the main
results can then choose to pass directly to Section \ref{ss:Sum:of:exponents}.
In Section \ref{ss:Singularity:pattern} we characterize square-tiled
cyclic  covers defined by holomorphic one-forms. We
determine  the  corresponding  ambient  strata of holomorphic $1$-forms
(of quadratic differentials in the general situation). In Section \ref{ss:Combinatorics:of:square-tiled} we describe in detail
how  to  explicitly  construct the square-tiled surface $(M_N(a_1,a_2,a_3,a_4),p^\ast
q_0)$, and in Section \ref{ss:Teichmuller:curve:associated:to:a:cyclic:cover}
we characterize   its  Veech  group  and  the  corresponding  arithmetic
Teichm\"uller curve. In Section \ref{sec:symmetries} we describe the automorphism group
of a cyclic cover.  In Section \ref{ss:sum:general}  we recall a general formula from \cite{Eskin:Kontsevich:Zorich}  for the sum of positive Lyapunov exponents of the Hodge bundle
over an arithmetic Teichm\"uller curve. From this formula we derive in
Section \ref{ss:Sum:of:exponents} an explicit  expression  for  the  sum  of  exponents
in the case of an arbitrary  square-tiled  cyclic  cover.  We  apply  these  results  in
Section \ref{ss:degenerate:spectrum} to determine square-tiled cyclic
covers  giving  rise  to arithmetic Teichm\"uller curves with a maximally
degenerate Kontsevich--Zorich spectrum.

In Appendix \ref{a:parity:of:spin} we present an analytic computation
of  the  spin-structure  (different from the  original  computation
in \cite{Matheus:Yoccoz}),  which  allows  us to determine the connected
component  of  the  ambient  stratum corresponding to the exceptional
square-tiled cyclic cover in genus four. Appendix \ref{a:exercise} is provided for the
sake of completeness: it is a technical exercise related to the proof
of one of the main Theorems (namely Theorem \ref{thm:1prime}) .
\smallskip

%--------------------------------------------------------------------
\section{Square-tiled flat structure on a cyclic cover}
\label{s:Square:tiled:structure}
\subsection{Cyclic covers}
\label{ss:Cyclic:covers}

Consider  an  integer $N$ such that $N>1$ and a $4$-tuple of integers
$(a_1,\dots,a_4)$ satisfying the following conditions:
\begin{equation}
\label{eq:a1:a4}
0<a_i\le N\,;\quad
\gcd(N, a_1,\dots,a_4) =1\,;\quad
\sum\limits_{i=1}^4 a_i\equiv 0 \ (\textrm{ mod } N)\ .
\end{equation}
Let    $z_1,z_2,z_3,z_4\in    \mathbb{C}$    be    four    distinct   points.
Conditions \eqref{eq:a1:a4}    imply    that,    possibly   after   a
desingularization, a Riemann surface  $M_N(a_1,a_2,a_3,a_4)$
defined by equation
\begin{equation}
\label{eq:wN:z14}
w^N=(z-z_1)^{a_1}(z-z_2)^{a_2}(z-z_3)^{a_3}(z-z_4)^{a_4}
\end{equation}
is  closed, connected and nonsingular. By construction,
$M_N(a_1,a_2,a_3,a_4)$ is a  ramified cover over the Riemann sphere
$\mathbb{P}^1(\mathbb{C})$ branched over the  points  $z_1, \dots$, $
z_4$. By puncturing the ramification points we obtain a regular
$N$-fold cover over
$\mathbb{P}^1(\mathbb{C})\setminus\{z_1,z_2,z_3,z_4\}$.

\begin{remark}
It  is  easy  to see that quadruples $(a_1,a_2,a_3,a_4)$ and $(\tilde
a_1,\tilde a_2,\tilde a_3,\tilde a_4)$ with $a_i=\tilde a_i\pmod N$
for  $i=1,2,3,4$, define isomorphic cyclic covers, which explains the
first   condition   in formula \eqref{eq:a1:a4}.   The  condition  on  $\gcd$
in \eqref{eq:a1:a4}  is  a  necessary  and  sufficient  condition  of
connectedness  of  the  resulting  cyclic  cover. The third condition
in formula \eqref{eq:a1:a4} implies that there is no branching at infinity.
\end{remark}

A  group  of  deck  transformations of this cover is the cyclic group
$\mathbb{Z}/N\mathbb{Z}$ with a generator $T:M\to M$ given by
\begin{equation}
\label{eq:T}
T(z,w)=(z,\zeta w) \,,
\end{equation}
where  $\zeta$  is  a  primitive  $N$th  root  of unity, $\zeta^N=1$.
Throughout  this  paper we will use the term \textit{cyclic cover} when referring to a Riemann
surface $M_N(a_1,\dots,a_4)$, with parameters $N,a_1,\dots,a_4$
satisfying relations \eqref{eq:a1:a4}.

%---------------------------------------------------------------
\subsection{Square-tiled surface associated to a cyclic cover}
\label{ss:Square:tiled:four:Cyclic:cover}

Any  meromorphic  quadratic  differential  $q(z)(dz)^2$  with at most
simple poles on a Riemann surface defines a flat metric $g(z)=|q(z)|$
with  conical  singularities  at  zeroes and poles of $q$. Let us consider
a meromorphic quadratic differential $q_0$ on $\mathbb{P}^1(\mathbb{C})$ of the form
\begin{equation}
\label{eq:q:on:CP1}
q_0:=\frac{
c_0
(dz)^2}{(z-z_1)(z-z_2)(z-z_3)(z-z_4)}\,,
\quad\text{where } c_0\in\mathbb{C}\setminus\{0\}\,.
\end{equation}
 It  has  simple poles at $z_1,z_2,z_3,z_4$ and no
other  zeroes  or  poles.  The quadratic differential $q_0$ defines a
flat  metric  on  a  sphere  obtained  by the following construction.
Consider  an appropriate flat cylinder. On each boundary component of
the  cylinder  mark  a pair of opposite points and glue the resulting
pairs  of  cords by isometries. The four marked points become conical
points  of  the  flat  metric.  For a convenient choice of parameters
$c_0$,
$z_1,\dots,z_4$   the  resulting  flat  sphere  can  be  obtained  by
identifying   the   boundaries  of  two  copies  of  a  unit  square.
Metrically, we get a square pillow with four corners corresponding to
the four poles of $q_0$, see Figure \ref{fig:pillow}.

Now  consider  some  cyclic cover $M_N(a_1,a_2,a_3,a_4)$ and the canonical projection
$p:M_N(a_1,a_2,a_3,a_4)\to\mathbb{P}^1(\mathbb{C})$.  Consider  an  induced quadratic differential
$q=p^\ast  q_0$  on  $M_N(a_1,a_2,a_3,a_4)$  and  the  corresponding  flat metric. By
construction,  the resulting flat surface is tiled with unit squares.
In   other   words,   we   get   a   \textit{square-tiled   surface},
see \cite{Eskin:Okounkov}, \cite{Zorich:square:tiled} (also called an
\textit{origami},  \cite{Lochak},  \cite{Schmithusen}; also called an
\textit{arithmetic translation surface}, see \cite{Gutkin:Judge}).

  In    this    paper    we    mostly focus on
\textit{square-tiled  cyclic  covers},  which are pairs
$$
(M_N(a_1,a_2,a_3,a_4)\ ,\ p^\ast q_0)\,,
$$
where the meromorphic quadratic differential $q_0$ on the underlying $\mathbb{P}^1(\mathbb{C})$
defines  a ``unit square pillow with vertical and horizontal sides''
as in Figure \ref{fig:pillow}.

%---------------------------------------------------------------
\subsection{Singularity pattern of a square-tiled cyclic cover}
\label{ss:Singularity:pattern}

The Riemann surface
$M_N(a_1,a_2,a_3,a_4)$  has  $\gcd(N,a_i)$  ramification  points over
each branching point $z_i\in \mathbb{P}^1(\mathbb{C})$,
where $i=1,2,3,4$, on the base sphere.
Each  ramification point has degree $N/\gcd(N,a_i)$. The flat
metric  has  four  conical  points $z=z_i$, $i=1,\dots,4$, on the base
sphere  with  a  cone  angle  $\pi$ at each conical point. Hence, the
induced  flat  metric  on  $M_N(a_1,a_2,a_3,a_4)$  has  $\gcd(N,a_i)$
conical  points  over  $z_i$;  each  conical  point  has  cone  angle
$\big(N/\gcd(N,a_i)\big)\pi$.

If  one of the cone angles is an odd multiple of $\pi$, then the flat
metric  has  nontrivial  holonomy; in other words, the quadratic
differential  $q=p^\ast  q_0$  \textit{is  not}  a global square of a
holomorphic  $1$-form.  Note, however, that although the condition that all cone
angles be even  multiples  of  $\pi$  is  necessary, it is not
a sufficient condition for triviality of the holonomy of the flat metric.

Denote  by  $\mathcal{H}(m_1,\dots,m_n)$ the stratum of Abelian differentials
with  zeroes  of  degrees $m_1,\dots,m_n$ and by $\mathcal{Q}(d_1,\dots,d_n)$
the stratum of meromorphic quadratic differentials with singularities
of  degrees  $d_1,\dots,d_n$. Here $m_i\in\mathbb{N}$, for $i=1,\dots,n$, and
$\sum_{i=1}^n  m_i  =  2g-2$.  We do not allow poles of orders higher
than    one    for    meromorphic    quadratic    differentials,   so
$d_i\in\{-1,1,2,3,\dots\}$,  for $i=1,\dots,n$, and the
sum of degrees of singularities satisfies the equality
$\sum_{i=1}^n d_i =4g-4$.

\begin{lemma}
\label{lm:singularity:pattern}
If  $N$  is  even  and all $a_i$, $i=1,2,3,4$, are odd, the quadratic
differential  $q=p^\ast  q_0$  is  a  global  square of a holomorphic
$1$-form $\omega$ on $M_N(a_1,a_2,a_3,a_4)$, where $\omega$ belongs to the stratum
\begin{equation}
\label{eq:singularities:one:form}
\omega\in\mathcal{H}\Bigg(
\underbrace{\frac{N}{2\gcd(N,a_1)}-1,\dots\,}_{\gcd(N,a_1)},
\dots,
\underbrace{\,\dots,\frac{N}{2\gcd(N,a_4)}-1}_{\gcd(N,a_4)}\Bigg)\ .
\end{equation}
The  associated flat metric on such a square-tiled cyclic cover has a
trivial linear holonomy.

If  $N$  is  odd,  or  if  $N$  is  even  but  at least one of $a_i$,
$i=1,2,3,4$,  is also even, the quadratic differential $q=p^\ast q_0$
is not a global square of a holomorphic $1$-form on $M_N(a_1,a_2,a_3,a_4)$. In
this case $q$ belongs to the stratum
\begin{equation}
\label{eq:singularities:quadratic:differential}
q\in\mathcal{Q}\Bigg(
\underbrace{\frac{N}{\gcd(N,a_1)}-2,\dots\,}_{\gcd(N,a_1)},
\dots,
\underbrace{\,\dots,\frac{N}{\gcd(N,a_4)}-2}_{\gcd(N,a_4)}
\Bigg)\ ,
\end{equation}
and  the  flat  metric on such a square-tiled cyclic cover $M_N(a_1,a_2,a_3,a_4)$ has
nontrivial linear holonomy.
\end{lemma}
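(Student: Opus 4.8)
The plan is to split the two assertions of the lemma into (i) the \emph{dichotomy} deciding whether $q=p^\ast q_0$ is a global square of a holomorphic $1$-form, and (ii) the identification of the ambient stratum, and to observe that (ii) is essentially a formal consequence of the cone-angle computation already carried out above. Indeed, at a ramification point over $z_i$ the cone angle of the flat metric equals $(N/\gcd(N,a_i))\,\pi$. A zero of degree $d$ of a quadratic differential has cone angle $(d+2)\pi$, while a zero of order $m$ of an Abelian differential has cone angle $2\pi(m+1)$; matching $(N/\gcd(N,a_i))\pi$ against these gives the local degree $N/\gcd(N,a_i)-2$ in the quadratic case and the order $N/(2\gcd(N,a_i))-1$ in the Abelian case, with $\gcd(N,a_i)$ points over each $z_i$. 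A short local computation in a coordinate $\zeta$ with $z-z_i=\zeta^{N/\gcd(N,a_i)}$ shows that $q$ has a zero (not a pole) of exactly this order at each such point, and that $q$ is holomorphic and nonzero over $z=\infty$; hence $q$ is a holomorphic quadratic differential and, whenever it is a global square, its square root is automatically a \emph{holomorphic} $1$-form with the stated zeros. Thus \eqref{eq:singularities:one:form} and \eqref{eq:singularities:quadratic:differential} follow once the dichotomy in (i) is established.

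For (i) I would first reduce the geometric statement to an algebraic one. Writing $z$ also for $z\circ p$ and regarding $p^\ast(dz)$ as a meromorphic $1$-form on $M_N(a_1,\dots,a_4)$, formula \eqref{eq:q:on:CP1} gives $q=\frac{c_0\,(p^\ast dz)^2}{\prod_{i=1}^4(z-z_i)}$. Hence $q=\omega^2$ for some meromorphic $\omega$ if and only if $\prod_{i=1}^4(z-z_i)$ is a square in the function field $\mathbb{C}(M_N(a_1,\dots,a_4))^\ast$, equivalently if and only if the $\mathbb{Z}/2$-valued monodromy of the locally defined square root $\sqrt q$ is trivial. Triviality of this monodromy is exactly the triviality of the linear holonomy of the flat metric, so (i) and the holonomy assertions are one and the same statement.

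To compute this monodromy I would use the explicit branched cover. A loop in the base encircling $z_i$ once multiplies $z-z_i$ by $e^{2\pi i}$, hence multiplies the local $\sqrt{z-z_i}$, and therefore $\sqrt q$, by $-1$; it also acts on the fiber by the deck transformation $T^{a_i}$. Consequently a closed loop in $M_N(a_1,\dots,a_4)$ whose projection has winding numbers $(n_1,\dots,n_4)$ about $(z_1,\dots,z_4)$ is genuinely closed precisely when $\sum_i n_i a_i\equiv 0\pmod N$, and the holonomy of $\sqrt q$ along it equals $(-1)^{\sum_i n_i}$. The holonomy is therefore trivial if and only if $\sum_i n_i$ is even for every integer vector $(n_i)$ with $\sum_i n_i a_i\equiv 0\pmod N$. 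Using that the condition $\gcd(N,a_1,\dots,a_4)=1$ from \eqref{eq:a1:a4} makes the map $\mathbb{Z}^4\to\mathbb{Z}/N$, $e_i\mapsto a_i$, surjective, this parity condition factors through $\mathrm{Hom}(\mathbb{Z}/N,\mathbb{Z}/2)$: it holds if and only if there is a homomorphism $\mathbb{Z}/N\to\mathbb{Z}/2$ sending every $a_i$ to $1$, i.e.\ if and only if $N$ is even and all $a_i$ are odd. In that case I would exhibit the square root explicitly as $\omega=\sqrt{c_0}\,w^{N/2}\prod_{i=1}^4(z-z_i)^{-(a_i+1)/2}\,dz$, which is single-valued and meromorphic on $M_N(a_1,\dots,a_4)$ because $N/2$ and each $(a_i+1)/2$ are integers, and which satisfies $\omega^2=q$ by virtue of $w^N=\prod_i(z-z_i)^{a_i}$; combined with the holomorphy established in (ii) this proves the first half of the lemma.

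The main obstacle is the direction $N$ even with at least one $a_i$ even. Here the \emph{local} obstruction disappears: as the remark before the lemma warns, all cone angles $(N/\gcd(N,a_i))\pi$ can still be even multiples of $\pi$ (for instance $N=4$, $(a_1,\dots,a_4)=(2,2,1,3)$ yields only cone angles $2\pi$ and $4\pi$), so one cannot rule out triviality of the holonomy by inspecting individual singularities. What saves the argument is precisely the \emph{global} monodromy computation above: the parity functional $\sum_i n_i \bmod 2$ fails to vanish on the subgroup $\{\,(n_i):\sum_i n_i a_i\equiv 0\ (N)\,\}$ as soon as some $a_i$ is even, and producing an explicit witness $(n_i)$ (or, equivalently, invoking the $\mathrm{Hom}(\mathbb{Z}/N,\mathbb{Z}/2)$ argument) is the crux. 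The case $N$ odd is comparatively easy, since then every cone angle $(N/\gcd(N,a_i))\pi$ is an odd multiple of $\pi$, so the local obstruction alone already forbids a holomorphic square root.
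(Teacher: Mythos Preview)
Your proof is correct and rests on the same underlying idea as the paper's: both compute the linear holonomy of the flat metric by factoring the $\mathbb{Z}/2$-valued holonomy representation of $\pi_1$ of the four-punctured sphere through the deck-transformation representation, using that a loop with exponent sums $(n_1,\dots,n_4)$ in the $\sigma_i$ lifts closed iff $\sum n_ia_i\equiv 0\pmod N$ and carries holonomy $(-1)^{\sum n_i}$. The packaging, however, is genuinely different. The paper argues case by case: for $N$ odd it exhibits the loop $\sigma_i^{N/\gcd(N,a_i)}$; for $N$ even with some $a_i$ even it first observes that exactly two of the $a_i$ must be even (from $\sum a_i\equiv 0\pmod N$ and the gcd condition) and then writes down an explicit witness $\rho=\sigma_1^{a_2+a_3}\sigma_2^{-a_1}\sigma_3^{-a_1}$; for $N$ even with all $a_i$ odd it runs the parity computation you also give. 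Your route replaces these explicit witnesses by the single observation that the parity map descends to $\mathbb{Z}/N$ iff there is a homomorphism $\mathbb{Z}/N\to\mathbb{Z}/2$ sending each $a_i$ to $1$, which forces $N$ even and all $a_i$ odd at once; this is cleaner and avoids the somewhat ad hoc construction of $\rho$. You also add two things the paper omits: an explicit rational expression $\omega=\sqrt{c_0}\,w^{N/2}\prod_i(z-z_i)^{-(a_i+1)/2}\,dz$ for the square root, and a direct local check that $q$ is holomorphic at the ramification points and over $\infty$ (the paper infers the orders of zero purely from the cone angles). The trade-off is that the paper's explicit loops could in principle be reused for other computations, whereas your $\mathrm{Hom}(\mathbb{Z}/N,\mathbb{Z}/2)$ argument is nonconstructive in the nontrivial-holonomy direction.
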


\begin{remark}
In  the  case,  when  $\gcd(N,a_i)=\frac{N}{2}$,  the corresponding ``conical
points''  have  cone  angles  $2\pi$ and so are actually regular
points  of  the  metric.  Depending on the context we either consider
such points as marked points or simply ignore them.
\end{remark}

\begin{remark}
\label{rm:holomorphic:meromorphic}
Lemma \ref{lm:singularity:pattern}  implies,  in particular, that the
quadratic  differential $q=p^\ast q_0$ is \textit{holomorphic} if and
only  if  inequalities  $a_i\le N$ are strict for all $i=1,2,3,4$. If $a_i=N$
for   at   least  one  index  $i$, then the  quadratic
differential  $q=p^\ast  q_0$ is \textit{meromorphic}; that is, it has
simple poles.
\end{remark}

\begin{proof}[Proof of the Lemma]
Let  $\sigma_i$  be  a  small  contour  around  $z_i$ in the positive
direction  on  the  sphere (see  Figure \ref{fig:pillow}).  The paths
$\sigma_i$,  $i=1,2,3$  generate  the fundamental group of the sphere
punctured at the four ramification points.

Since  the  cone  angle  at  each  cone singularity of the underlying
``flat  sphere'' is $\pi$ (whether it is glued from squares
or  not),  the parallel transport along each loop $\sigma_i$ brings a
tangent vector $\vec v$ to $-\vec v$. Let $z\in\mathbb{P}^1(\mathbb{C})$ be a point of the
loop $\sigma_i$, and let $(w,z)$ be one of its preimages in the cover
$M_N(a_1,a_2,a_3,a_4)$.  By lifting  the  loop  $\sigma_i$ to a path on the cover which
starts at the point $(w,z)$, we land at the point $(\zeta^{a_i} w,z)$,
where $\zeta$ is the primitive $N$th root of unity. Thus
we  get  the following representation of the fundamental group of the
punctured   sphere   in   the   cyclic   group   $\mathbb{Z}/N\mathbb{Z}$   of   deck
transformations \eqref{eq:T}  and  the holonomy group $\mathbb{Z}/2\mathbb{Z}$ of the
flat metric on the sphere:
\begin{equation}
\label{eq:representations}
\operatorname{Deck}: \sigma_i\mapsto a_i \in\mathbb{Z}/N\mathbb{Z}\qquad
\operatorname{Hol}: \sigma_i\mapsto 1\in \mathbb{Z}/2\mathbb{Z}
\end{equation}
Since  the metric on $M_N(a_1,a_2,a_3,a_4)$ is induced from the metric on the sphere,
the  holonomy  representation $\textrm{hol}$ of  the fundamental group of the covering surface
$M_N(a_1,a_2,a_3,a_4)$ factors through the one of the sphere, \textit{i.e.}, $\textrm{hol}=\textrm{Hol}\circ p_{\ast}$.

Let us suppose that $N$ is odd. Let $\alpha_i=N/\gcd(N,a_i)$. Then
$$
\operatorname{Deck}(\sigma_i^{\alpha_i})=a_i\cdot\frac{N}{\gcd(N,a_i)}=
\frac{a_i}{\gcd(N,a_i)}\,N=0\pmod N\ .
$$
Hence $\sigma_i^{\alpha_i}$ can be lifted to a closed path on $M_N(a_1,a_2,a_3,a_4)$.
On                  the                  other                  hand,
$\operatorname{Hol}(\sigma_i^{\alpha_i})=\alpha_i=N/\gcd(N,a_i)=1\pmod   2$.  Thus,
the  flat metric on
the cover
$M_N(a_1,a_2,a_3,a_4)$ has nontrivial holonomy, and our quadratic
differential  $q$  is  \textit{not}  a global square of a holomorphic
$1$-form.

Let us suppose that $N$  is  even,  but  some  $a_i$  is  also even. Since
$$
\gcd(N,a_1,\dots,a_4)=1\,,
$$
at  least  one of $a_j$ is odd. Since $a_1+\dots+a_4$ is divisible by
$N$,  this  sum  is even, and hence we have exactly two even entries.
By relabeling the ramification points on the sphere, if necessary, we
may assume that $a_1, a_3$ are odd while $a_2,a_4$ are even. Let us consider
a path
$$
\rho:=\sigma_1^{(a_2+a_3)}\sigma_2^{-a_1}\sigma_3^{-a_1}
$$
For the induced deck transformation we get
$$
\operatorname{Deck}(\rho)=\left((a_2+a_3)a_1-a_1 a_2-a_1 a_3\right)=0\ .
$$
Hence,  a lift of $\rho$ is closed on $M_N(a_1,a_2,a_3,a_4)$. On the other hand, this
path  acts  as  an element $\operatorname{Hol}(\rho)=(a_2+a_3)-a_1-a_1=1\pmod 2$ in
the  holonomy group $\mathbb{Z}/2\mathbb{Z}$. Hence, the holonomy of the metric along
this closed path on the cover $M_N(a_1,a_2,a_3,a_4)$ is nontrivial, and our quadratic
differential  $q$  is  \textit{not}  a global square of a holomorphic
$1$-form.

Finally, suppose that $N$ is even and all $a_i$, $i=1,2,3,4$ are odd.
Any element of the fundamental group of the underlying four-punctured
sphere can be represented by a product
$$
\tau=\prod_{j=1}^n\sigma_{i_j}^{p_j}\,,
\quad\text{where } p_j=\pm 1\text{ and } i_j\in\{1,2,3,4\}\,.
$$
Let  $k_i$  be  the  algebraic  number  of  entries of the ``letter''
$\sigma_i$  in  the  word  as above, where $\sigma^{-1}_i$ is counted
with the sign minus, and $i=1,2,3,4$. The loop $\tau$ as above can be
lifted  to a closed loop on the cover if and only if $\operatorname{Deck}(\tau)=0$,
that is,
if  and  only  if  $k_1  a_1+\dots  +k_4  a_4=0\pmod N$. In this case
$\operatorname{Hol}(\tau)=k_1+\dots+k_4$.   Since   all   $a_i$  are  odd  we  have
$k_1+\dots+k_4=k_1  a_1+\dots +k_4 a_4\pmod 2$. Finally, since $N$ is
even  and  $k_1  a_1+\dots  +k_4  a_4=0\pmod  N$,  we  also have $k_1
a_1+\dots  +k_4  a_4=0\pmod  2$.  Thus, in this case, the flat metric
on $M_N(a_1,a_2,a_3,a_4)$ has trivial linear holonomy.

Recall  that  a  flat  metric  associated  to a meromorphic quadratic
differential with at most simple poles has cone angle $(d+2)\pi$ at a
zero  of  order  $d$ (we consider a simple pole as a ``zero of degree
$-1$'').  A  flat  metric associated to a holomorphic $1$-form has cone
angle  $2(k+1)\pi$ at a zero of degree $k$. We have already evaluated
the  number  of  ramification  points  and their ramification degrees
which  define cone angles at all conical singularities, and hence the
degrees  of  the corresponding quadratic (Abelian) differential. This
completes the proof of the Lemma.
\end{proof}

\begin{lemma}
\label{lm:minus:omega}
In   the   case   when  the  quadratic  differential  $q=p^\ast  q_0$
determining  a  square-tiled  flat  structure is the global square of a
holomorphic  $1$-form, that is, $q=\omega^2$, the form $\omega$ is anti-invariant
with  respect  to  the  action  of  a  generator of the group of deck
transformations,
\begin{equation}
\label{eq:omega:antiinvariant}
T^\ast\omega=-\omega\,.
\end{equation}
\end{lemma}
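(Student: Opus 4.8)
The plan is to first observe that $q=\omega^2$ is automatically invariant under the deck group, reducing the statement to the determination of a single sign, and then to pin that sign down using the nontrivial $\mathbb{Z}/2\mathbb{Z}$ holonomy of the base differential $q_0$ recorded in \eqref{eq:representations}.

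First I would note that, since $T$ is a deck transformation of $p\colon M_N(a_1,\dots,a_4)\to\mathbb{P}^1(\mathbb{C})$, we have $p\circ T=p$ and hence
\[
T^\ast q=T^\ast p^\ast q_0=(p\circ T)^\ast q_0=p^\ast q_0=q.
\]
Therefore $(T^\ast\omega)^2=T^\ast(\omega^2)=T^\ast q=q=\omega^2$, so the meromorphic function $f:=T^\ast\omega/\omega$ satisfies $f^2\equiv 1$. As $M_N(a_1,\dots,a_4)$ is connected and $\omega\not\equiv0$, the function $f$ is constant, equal to some $\epsilon\in\{+1,-1\}$, i.e. $T^\ast\omega=\epsilon\,\omega$. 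It then remains only to exclude $\epsilon=+1$.

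To determine $\epsilon$ I would exploit the holonomy representation \eqref{eq:representations}. Away from the ramification points $p$ is a local biholomorphism, and locally $\omega=\pm\,p^\ast\omega_0$, where $\omega_0=\sqrt{q_0}$ is the multivalued square root of $q_0$ on the punctured sphere. By the form \eqref{eq:q:on:CP1} of $q_0$, analytic continuation of $\omega_0$ once around a small loop $\sigma_i$ multiplies it by $-1$; this is precisely the statement $\operatorname{Hol}(\sigma_i)=1\in\mathbb{Z}/2\mathbb{Z}$ in \eqref{eq:representations}. Now fix a generic base point $z_0$ on $\sigma_i$ and a preimage $x=(z_0,w)$, and lift $\sigma_i$ to a path starting at $x$; as computed in the proof of Lemma \ref{lm:singularity:pattern}, this lift ends at $T^{a_i}x=(z_0,\zeta^{a_i}w)$. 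Using $z$ as a common local coordinate near both $x$ and $T^{a_i}x$ (legitimate since $T$ preserves the $z$-coordinate, so that $T^{a_i}$ acts as the identity $z\mapsto z$ in these charts), I would compare the two local expressions of $\omega$: continuation of $\omega_0$ along $\sigma_i$ relates the local coefficients by the factor $-1$, while $T$-equivariance relates them by the factor $(T^{a_i})^\ast\omega/\omega=\epsilon^{a_i}$. Equating the two gives $\epsilon^{a_i}=-1$, and since in the present case every $a_i$ is odd this forces $\epsilon=-1$, which is exactly \eqref{eq:omega:antiinvariant}.

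The formal step $(T^\ast\omega)^2=q$ is routine; the crux — and the main thing to get right — is the sign computation, namely the careful translation of the $\mathbb{Z}/2\mathbb{Z}$ holonomy of $q_0$ (the monodromy $-1$ of $\omega_0$ around $\sigma_i$) into the transformation law of $\omega$ under the deck group. Conceptually this amounts to checking that the character $T\mapsto\epsilon$ of $\mathbb{Z}/N\mathbb{Z}$ governing $\omega$ is the composition of $\operatorname{Deck}$ with the reduction $\mathbb{Z}/N\mathbb{Z}\to\mathbb{Z}/2\mathbb{Z}$, which is well defined precisely because $N$ is even; evaluating on the generator and using $a_i\equiv1\ (\mathrm{mod}\ 2)$ is what makes the bookkeeping with the common coordinate $z$ precise.
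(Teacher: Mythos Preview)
Your argument is correct, but it takes a more computational route than the paper. The paper dispatches the sign with a one-line contradiction: if $T^\ast\omega=\omega$, then $\omega$ is invariant under the full deck group and hence descends to a single-valued $1$-form on $\mathbb{P}^1(\mathbb{C})$ whose square is $q_0$; but $q_0$ is manifestly not a global square of a $1$-form (it has simple poles), so $\epsilon=+1$ is impossible. Your approach instead pins down $\epsilon$ directly by tracking the monodromy of $\sqrt{q_0}$ around a pole $z_i$ and comparing with $(T^{a_i})^\ast\omega=\epsilon^{a_i}\omega$, obtaining $\epsilon^{a_i}=-1$ and hence $\epsilon=-1$ since $a_i$ is odd. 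The paper's argument is slicker and avoids any local bookkeeping; yours has the virtue of making explicit \emph{why} the sign is $-1$ in terms of the holonomy data \eqref{eq:representations} already computed, and it shows more transparently how the hypotheses ``$N$ even, all $a_i$ odd'' enter (via $\epsilon^{a_i}=-1$ and $\epsilon^N=1$).
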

\begin{proof}
By   construction,  the  quadratic  differential  $q=p^\ast  q_0$  is
invariant  under the action of deck transformations on $M_N(a_1,a_2,a_3,a_4)$. Hence,
when  $N$  is  even,  all  $a_i$,  $i=1,2,3,4$ are odd, and $q=p^\ast
q_0=\omega^2$, the holomorphic $1$-form $\omega$ is either invariant or
anti-invariant  under  the  action of a generator of the group of deck
transformations \eqref{eq:T}.

Invariance of $\omega$ under $T^\ast$ would mean that $\omega$ can be
pushed forward to $\mathbb{P}^1(\mathbb{C})$, which would imply in turn that $q_0$
is  a  global  square  of  a  holomorphic $1$-form. This is not true.
Hence, $\omega$ is anti-invariant.
\end{proof}

By Riemann--Hurwitz formula, the genus $g$ of
$M_N(a_1,a_2,a_3,a_4)$ satisfies
\begin{equation*}
\begin{aligned}
2-2g&=2N-
\sum_{\substack{\text{ramification}\\
                \text{points}}}
(\text{degree of ramification}-1)
\\&=\
2N-\sum_{i=1}^4 \gcd(N,a_i)\cdot\big(N/\gcd(N,a_i)-1\big) \\
&=\ \sum_{i=1}^4 \gcd(N,a_i)-2N
\end{aligned}
\end{equation*}
and hence,
\begin{equation}
\label{eq:genus}
g=N+1- \frac{1}{2} \sum\limits_{i=1}^4\textrm{gcd}(a_i,N)\,.
\end{equation}

The  same  result can be obtained by summing up the degrees of zeroes,
which gives $2g-2$ for a holomorphic $1$-form in \eqref{eq:singularities:one:form},
and  $4g-4$  for  a  quadratic differential in \eqref{eq:singularities:quadratic:differential}.

%---------------------------------------------------------------
\subsection{Combinatorics of a square-tiled  cyclic cover}
\label{ss:Combinatorics:of:square-tiled}

It  is convenient to define a square-tiled surface corresponding to a
holomorphic  $1$-form  by a pair of permutations on the set of all squares, $\pi_h$ and $\pi_v$, indicating for each square its neighbor to the right and its neighbor on top respectively.
Let us evaluate these permutations for a square-tiled surface defined  by  a  holomorphic
$1$-form  $\omega$  on $M_N(a_1,a_2,a_3,a_4)$ (and in addition to the conditions in formula \eqref{eq:a1:a4}  we  assume  that  $N$ is even and all $a_i$ are odd).

We start with an appropriate enumeration of the squares.
By  construction, our ``square-tiled pillow'' $(\mathbb{P}^1(\mathbb{C}),q_0)$ in the base
of the cover
$$
M_N(a_1,a_2,a_3,a_4)\to\mathbb{P}^1(\mathbb{C})
$$
is tiled with two unit squares. Since the above cover has degree $N$,
our square-tiled cyclic cover gets tiled with $2N$ squares.

Assume  that  the  branch points  $z_1,z_2,z_3,z_4$  are  associated  to the
corners  of  the  pillow  as in Figure \ref{fig:pillow}. We associate the
letters  $A,B,C,D$  to  the  four  corners  respectively.  We also
associate  the corresponding letters to the corners of each square on
the surface $M_N(a_1,a_2,a_3,a_4)$.

Let  us  color one of the faces of our pillow in white, and the other
one  in black. Let us lift this coloring to $M_N(a_1,a_2,a_3,a_4)$. Choose some white
square, and associate  the  number  $0$  to it. Take a black square
adjacent  to  the  side  $[CD]$  of  the first one and associate the
number  $1$  to  it. Acting by deck transformations we associate to a
white  square  $T^k(S_0)$  the  number  $2k$,  and  to a black square
$T^k(S_1)$  the  number $2k+1$. As usual, $k$ is taken modulo $N$, so
we may assume that $0\le k< N$.

\begin{figure}[hbt]
\includegraphics{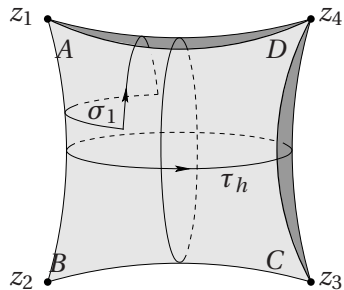}
\begin{picture}(0,0)(100,0)
\put(20,-10){$z_1$}
\put(20,-110){$z_2$}
\put(137,-110){$z_3$}
\put(137,-10){$z_4$}
\put(37,-24){$A$}
\put(35,-105){$B$}
\put(117,-104){$C$}
\put(117,-24){$D$}
\put(50,-47){$\sigma_1$}
\put(100,-73){$\tau_h$}
\end{picture}
\vspace{110bp}
\caption{
\label{fig:pillow}
Flat sphere glued from two squares
}
\end{figure}

Let us consider  a  small  loop  $\sigma_i$  encircling  $z_i$ in a positive
direction; we assume that $\sigma_i$ does not have other ramification
points  inside  an  encircled  domain,  see  Figure \ref{fig:pillow}.
Let us then consider  a  lift of $\sigma_i$ to $M_N(a_1,a_2,a_3,a_4)$. The end-point of the lifted
path is the image of the action of $T^{a_i}$ on the starting point of
the lifted path. Hence, starting at a square number $j$ and ``going
around   a   corner''   on  $M_N(a_1,a_2,a_3,a_4)$  in  the  positive 
(counterclockwise)  direction  we  get to a square number $j+2a_j\pmod
{2N}$ (see Figure \ref{fig:local:moves}).

Let us consider  a  horizontal  path $\tau_h$ as in Figure \ref{fig:pillow}
and a lift of $\tau_h$ to
the surface
$M_N(a_1,a_2,a_3,a_4)$. The end-point of the resulting
path  is  the  image of the action of $T^{a_1+a_4}=T^{-(a_2+a_3)}$ on
the starting point of the lifted path. Hence, ``moving two squares to
the  right''  on  $M_N(a_1,a_2,a_3,a_4)$  we move from a square number $j$ to a square
number  $j+2(a_1+a_4)\pmod{2N}$  if  vertices $B$ and $C$ are at the
bottom    of    the    squares,    and    to    a    square    number
$j-2(a_1+a_4)\pmod{2N}$  if  vertices  $B$ and $C$ are on top of the
squares (see Figure \ref{fig:cartoon}).

\begin{figure}[htb]
\includegraphics{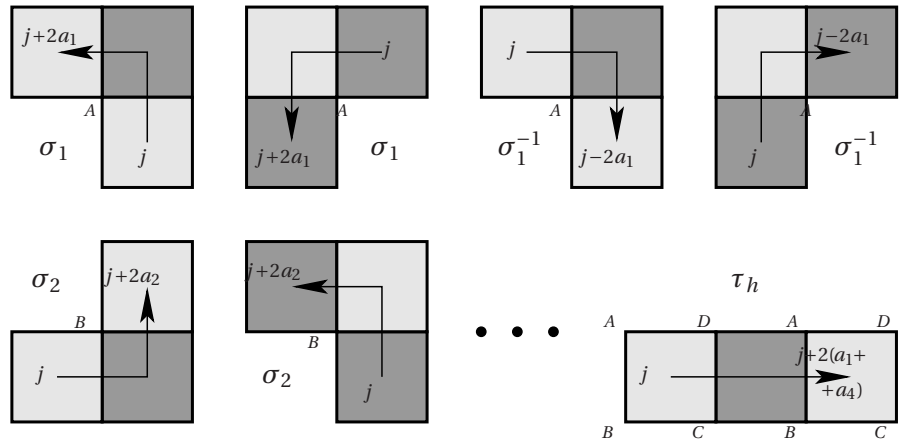}
\begin{picture}(0,0)(100,0)
\begin{picture}(0,0)(0,0)
\put(-55,-63){$\sigma_1$}
\put(-38,-48){\tiny\textit A}
\put(-18,-66){\scriptsize $j$}
\put(-62,-20){\scriptsize $j\!+\!2a_1$}
\end{picture}
\begin{picture}(0,0)(-100,0)
\put(-32,-63){$\sigma_1$}
\put(-45,-48){\tiny\textit A}
\put(-76,-66){\scriptsize $j\!+\!2a_1$}
\put(-28,-25){\scriptsize $j$}
\end{picture}
\begin{picture}(0,0)(-171,0)
\put(-57,-63){$\sigma^{-1}_1$}
\put(-37.5,-48){\tiny\textit A}
\put(-27,-66){\scriptsize $j\!-\!2a_1$}
\put(-54,-25){\scriptsize $j$}
\end{picture}
\begin{picture}(0,0)(-271,0)
\put(-32,-63){$\sigma^{-1}_1$}
\put(-45,-48){\tiny\textit A}
\put(-65,-66){\scriptsize $j$}
\put(-40,-20){\scriptsize $j\!-\!2a_1$}
\end{picture}
\begin{picture}(0,0)(0,88)
\put(-67,-25){$\sigma_2$}
\put(-51,-40.5){\tiny\textit B}
\put(-66,-60){\scriptsize $j$}
\put(-40,-23){\scriptsize $j\!+\!2a_2$}
\end{picture}
\begin{picture}(0,0)(-100,89)
\put(-82,-60){$\sigma_2$}
\put(-65,-47){\tiny\textit B}
\put(-44,-66){\scriptsize $j$}
\put(-89,-20){\scriptsize $j\!+\!2a_2$}
\end{picture}
\begin{picture}(0,0)(-200,88)
\put(-7,-25){$\tau_h$}
\put(-55,-40){\tiny\textit A}
\put(-20,-40){\tiny\textit D}
\put(14,-40){\tiny\textit A}
\put(48,-40){\tiny\textit D}
\put(-56,-82){\tiny\textit B}
\put(-22,-82){\tiny\textit C}
\put(13,-82){\tiny\textit B}
\put(47,-82){\tiny\textit C}
\put(-42,-60){\scriptsize $j$}
\put(17,-53){\scriptsize $j\!\!+\!2(\!a_1\!+$}
\put(27,-66){\scriptsize $+a_4)$}
\end{picture}
\end{picture}
\vspace{180bp}
\caption{
\label{fig:local:moves}
Local moves on a square-tiled cyclic cover}
\end{figure}

Using  these  rules, it is easy to determine the permutations $\pi_h$
and  $\pi_v$.  Start with two neighboring squares numbered by $0$ and
$1$.  By  iterating  the operation $\tau_h$, we can determine all the
squares  to  the  right  of  $0$  and  $1$ till we close up and get a
cylinder.  Recall  that we associate letters $A,B,C,D$ to the corners
of the squares. By applying appropriate operations $\sigma_i$ we find
the  squares  located atop of those which are already constructed. By
applying  appropriate  operations  $\sigma^{-1}_i$  we  find a direct
neighbor  to  the right for every square which does not belong to one
of  the  previously  constructed  horizontal  cylinders.  Having  two
horizontally  adjacent  squares,  we  apply iteratively the operation
$\tau_h$   to   obtain   all   $2N/\gcd(N,a_1+a_4)$  squares  in  the
corresponding cylinder (row), etc.

\begin{figure}[htb]
\includegraphics{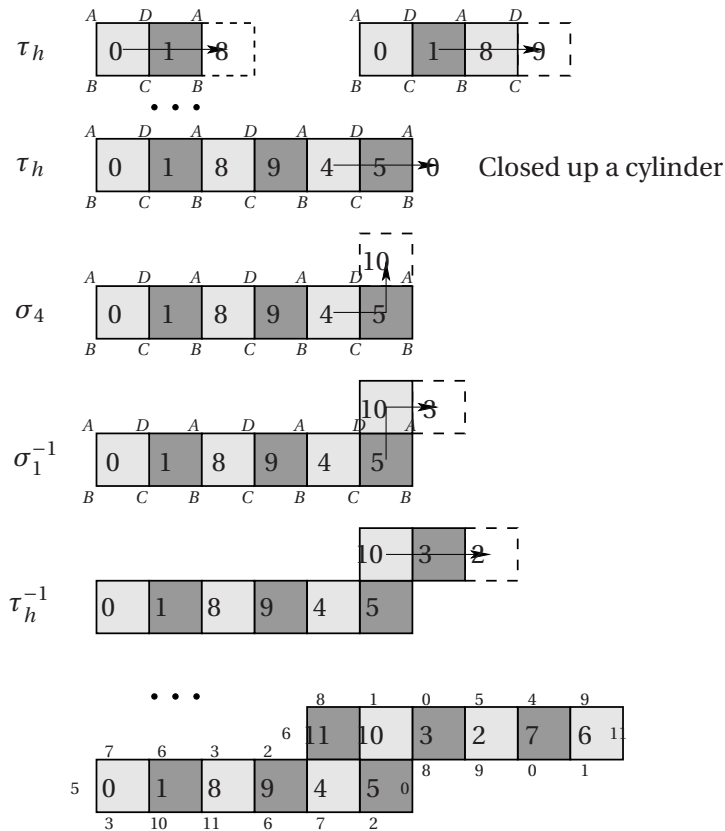}
\begin{picture}(0,0)(100,0)
\begin{picture}(0,0)(0,0.5)
\put(-25,-23){$\tau_h$}
\put(10,-25){0}
\put(30,-25){1}
\put(50,-25){8}
\put(110,-25){0}
\put(130,-25){1}
\put(150,-25){8}
\put(170,-25){9}
\put(1,-10){\tiny\textit A}
\put(21,-10){\tiny\textit D}
\put(41,-10){\tiny\textit A}
\put(1,-37){\tiny\textit B}
\put(21,-37){\tiny\textit C}
\put(41,-37){\tiny\textit B}
\put(101,-10){\tiny\textit A}
\put(121,-10){\tiny\textit D}
\put(141,-10){\tiny\textit A}
\put(161,-10){\tiny\textit D}
\put(101,-37){\tiny\textit B}
\put(121,-37){\tiny\textit C}
\put(141,-37){\tiny\textit B}
\put(161,-37){\tiny\textit C}
\end{picture}
\begin{picture}(0,0)(2.5,44.5) % (2.5,43.5)
\put(-25,-23){$\tau_h$}
\put(10,-25){0}
\put(30,-25){1}
\put(50,-25){8}
\put(70,-25){9}
\put(90,-25){4}
\put(110,-25){5}
\put(130,-25){0}
\put(150,-25){Closed up a cylinder}
\put(1,-10){\tiny\textit A}
\put(21,-10){\tiny\textit D}
\put(41,-10){\tiny\textit A}
\put(61,-10){\tiny\textit D}
\put(81,-10){\tiny\textit A}
\put(101,-10){\tiny\textit D}
\put(121,-10){\tiny\textit A}
\put(1,-37){\tiny\textit B}
\put(21,-37){\tiny\textit C}
\put(41,-37){\tiny\textit B}
\put(61,-37){\tiny\textit C}
\put(81,-37){\tiny\textit B}
\put(101,-37){\tiny\textit C}
\put(121,-37){\tiny\textit B}
\end{picture}
\begin{picture}(0,0)(5,100)
\put(-25,-23){$\sigma_4$}
\put(10,-25){0}
\put(30,-25){1}
\put(50,-25){8}
\put(70,-25){9}
\put(90,-25){4}
\put(110,-25){5}
\put(106,-5){10}
\put(1,-10){\tiny\textit A}
\put(21,-10){\tiny\textit D}
\put(41,-10){\tiny\textit A}
\put(61,-10){\tiny\textit D}
\put(81,-10){\tiny\textit A}
\put(101,-10){\tiny\textit D}
\put(121,-10){\tiny\textit A}
\put(1,-37){\tiny\textit B}
\put(21,-37){\tiny\textit C}
\put(41,-37){\tiny\textit B}
\put(61,-37){\tiny\textit C}
\put(81,-37){\tiny\textit B}
\put(101,-37){\tiny\textit C}
\put(121,-37){\tiny\textit B}
\end{picture}
\begin{picture}(0,0)(8,156) % (8,155)
\put(-25,-23){$\sigma^{-1}_1$}
\put(10,-25){0}
\put(30,-25){1}
\put(50,-25){8}
\put(70,-25){9}
\put(90,-25){4}
\put(110,-25){5}
\put(106,-5){10}
\put(130,-5){3}
\put(1,-10){\tiny\textit A}
\put(21,-10){\tiny\textit D}
\put(41,-10){\tiny\textit A}
\put(61,-10){\tiny\textit D}
\put(81,-10){\tiny\textit A}
\put(103,-10){\tiny\textit D}
\put(123,-10){\tiny\textit A}
\put(1,-37){\tiny\textit B}
\put(21,-37){\tiny\textit C}
\put(41,-37){\tiny\textit B}
\put(61,-37){\tiny\textit C}
\put(81,-37){\tiny\textit B}
\put(101,-37){\tiny\textit C}
\put(121,-37){\tiny\textit B}
\end{picture}
\begin{picture}(0,0)(12,211) % (12,219)
\put(-25,-23){$\tau_h^{-1}$}
\put(10,-25){0}
\put(30,-25){1}
\put(50,-25){8}
\put(70,-25){9}
\put(90,-25){4}
\put(110,-25){5}
\put(106,-5){10}
\put(130,-5){3}
\put(150,-5){2}
\end{picture}
\begin{picture}(0,0)(14,279.5) % (12,279.5)
\put(10,-25){0}
\put(30,-25){1}
\put(50,-25){8}
\put(70,-25){9}
\put(90,-25){4}
\put(110,-25){5}
\put(86,-5){11}
\put(106,-5){10}
\put(130,-5){3}
\put(150,-5){2}
\put(170,-5){7}
\put(190,-5){6}
   %\put(206,-5){11}
\put(-2,-24){\tiny 5}
\put(11,-10){\tiny 7}
\put(31,-10){\tiny 6}
\put(51,-10){\tiny 3}
\put(71,-10){\tiny 2}
   %\put(88,-10){\tiny 11}
\put(78,-3){\tiny 6}
\put(91,10){\tiny 8}
\put(11,-37){\tiny 3}
\put(28,-37){\tiny 10}
\put(48,-37){\tiny 11}
\put(71,-37){\tiny 6}
\put(91,-37){\tiny 7}
\put(111,-37){\tiny 2}
\put(123,-24){\tiny 0}
\put(111,10){\tiny 1}
\put(131,10){\tiny 0}
\put(151,10){\tiny 5}
\put(171,10){\tiny 4}
\put(191,10){\tiny 9}
\put(131,-17){\tiny 8}
\put(151,-17){\tiny 9}
\put(171,-17){\tiny 0}
\put(191,-17){\tiny 1}
\put(202,-3){\tiny 11}
\end{picture}
\end{picture}
\vspace{320bp} % {260bp}
\caption{
\label{fig:cartoon}
Cartoon movie construction of $M_6(1,1,1,3)$.}
\end{figure}

\begin{example}
Figure \ref{fig:cartoon}  presents   a   construction   of   the
enumeration for the square-tiling of $M_6(1,1,1,3)$, where the exponents
$\{1,1,1,3\}$    are    represented    by    vertices   $\{A,B,C,D\}$
respectively.  Note that by moving two squares to the right in the
first  row  (say,  $0\longrightarrow  8$) we apply $\tau_h$, while by
moving   two   squares   to   the  right  in  the  second  row  (say,
$10\longrightarrow  2$)  we  apply $\tau^{-1}_h$. In this example the
permutations  $\pi_h$  and  $\pi_v$ have the following decompositions
into cycles
\begin{align*}
\pi_h&=(0,1,8,9,4,5)(11,10,3,2,7,6)\\
\pi_v&=(0,7,4,11,8,3)(1,6,9,2,5,10)
\end{align*}
\end{example}

When  $N$  is  odd,  or when $N$ is even but at least one of $a_i$ is
also  even,  the  quadratic  differential $q=p^\ast q_0$ on $M_N(a_1,a_2,a_3,a_4)$ is
\textit{not}  a global square of a holomorphic $1$-form. The holonomy
of  the  flat  structure  defined  by  $q$  is no longer trivial: a
parallel transport of a tangent vector $v$ along certain closed paths
brings  it  to  $-v$.  In  particular  the  notions  of ``up--down'' or
``left--right''  are  no longer globally  defined.  However, the
notions  of  horizontal direction and of vertical direction are still
globally  well-defined.  Our  flat  surface  is  square-tiled and its
combinatorial geometry can still be encoded by a pair of permutations
$\pi_h$ and $\pi_v$.

Note  that  the  vertices of every square of our tiling are naturally
labeled   by   indices   $A,B,C,D$  according to the label of their projections
on the Riemann sphere. By convention, $\pi_h(2k)$ indicates
the number of a black square adjacent to the side $[CD]$ of the white
square  $2k$, and $\pi_h(2k+1)$ indicates the number of a white square
adjacent  to  the  side $[AB]$ of the black square $2k+1$. Similarly,
$\pi_v(2k)$  indicates  the  number of a black square adjacent to the
side  $[AD]$ of the white square $2k$, and $\pi_h(2k+1)$ indicates the
number  of  a  white  square adjacent to the side $[BC]$ of the black
square $2k+1$.

With this definition the remaining part of the construction literally
coincides  with  the  one  for  an  Abelian  differential,  which was
described above.

\begin{figure}[htb]
\includegraphics{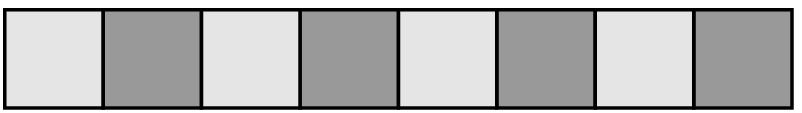}
\includegraphics{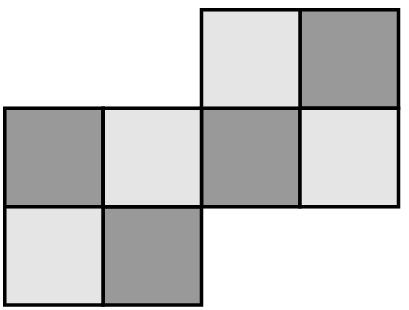}
\begin{picture}(0,0)(150,0) % (0,0)(100,0)
\begin{picture}(0,0)(1,23.5) % (2.5,45.5)
\put(10,-25){0}
\put(30,-25){1}
\put(50,-25){6}
\put(70,-25){7}
\put(90,-25){4}
\put(110,-25){5}
\put(130,-25){2}
\put(150,-25){3}
\put(1,-10){\tiny\textit A}
\put(21,-10){\tiny\textit D}
\put(41,-10){\tiny\textit A}
\put(61,-10){\tiny\textit D}
\put(81,-10){\tiny\textit A}
\put(101,-10){\tiny\textit D}
\put(121,-10){\tiny\textit A}
\put(141,-10){\tiny\textit D}
\put(161,-10){\tiny\textit A}
\put(1,-37){\tiny\textit B}
\put(21,-37){\tiny\textit C}
\put(41,-37){\tiny\textit B}
\put(61,-37){\tiny\textit C}
\put(81,-37){\tiny\textit B}
\put(101,-37){\tiny\textit C}
\put(121,-37){\tiny\textit B}
\put(141,-37){\tiny\textit C}
\put(161,-37){\tiny\textit B}
\put(-2,-24){\tiny 3}
\put(11,-10){\tiny 5}
\put(31,-10){\tiny 4}
\put(51,-10){\tiny 3}
\put(71,-10){\tiny 2}
\put(91,-10){\tiny 1}
\put(111,-10){\tiny 0}
\put(131,-10){\tiny 7}
\put(151,-10){\tiny 6}
\put(164,-24){\tiny 0}
\put(11,-37){\tiny 5}
\put(31,-37){\tiny 4}
\put(51,-37){\tiny 3}
\put(71,-37){\tiny 2}
\put(91,-37){\tiny 1}
\put(111,-37){\tiny 0}
\put(131,-37){\tiny 7}
\put(151,-37){\tiny 6}
\end{picture}
\begin{picture}(0,0)(-217,23.5) % (2.5,45.5)
\put(10,-45){0}
\put(30,-45){1}
\put(10,-25){5}
\put(30,-25){4}
\put(50,-25){7}
\put(70,-25){6}
\put(50,-5){2}
\put(70,-5){3}
\put(38,10){\tiny\textit A}
\put(61,10){\tiny\textit D}
\put(84,10){\tiny\textit A}
\put(-2,-10){\tiny\textit B}
\put(21,-10){\tiny\textit C}
\put(84,-14){\tiny\textit B}
\put(-2,-33){\tiny\textit A}
\put(61,-37){\tiny\textit D}
\put(84,-37){\tiny\textit A}
\put(-2,-57){\tiny\textit B}
\put(21,-57){\tiny\textit C}
\put(44,-57){\tiny\textit B}
\put(-2,-44){\tiny 3}
\put(-2,-24){\tiny 2}
\put(44,-44){\tiny 6}
\put(84,-24){\tiny 1}
\put(84,-4){\tiny 0}
\put(38,-4){\tiny 5}
\put(11,-57){\tiny 5}
\put(31,-57){\tiny 4}
\put(51,-37){\tiny 2}
\put(71,-37){\tiny 3}
\put(11,-10){\tiny 0}
\put(31,-10){\tiny 1}
\put(51,10){\tiny 7}
\put(71,10){\tiny 6}
\end{picture}
\end{picture}
\vspace{85pt}
\caption{
\label{fig:nonorientable}
These  two  ways  to  unfold  the square-tiled
cyclic cover $M_4(1,3,2,2)$ represent
decompositions into horizontal and into vertical cylinders.
}
\end{figure}

\begin{example}
Figure \ref{fig:nonorientable}   illustrates   a   square-tiling   of
$M_4(1,3,2,2)$.  The  flat metric has nontrivial linear holonomy; the
corresponding   quadratic   differential   belongs   to  the  stratum
$\mathcal{Q}(2,2)$.  In this example the permutations $\pi_h$ and $\pi_v$ are
decomposed into cycles as
\begin{align*}
\pi_h&=(0,1,6,7,4,5,2,3)\\
\pi_v&=(0,5)(1,4)(2,7)(3,6)
\end{align*}
\end{example}

\begin{remark}
Note  that  when  pairing  sides  of  boundary squares of an abstract
square-tiled  surface  one  has  to  respect  the  orientation of the
surface.
\end{remark}

We  proceed  below  with  an elementary lemma which will, however, be
important later.

\begin{lemma}
\label{lm:cylinder:width}
Consider a decomposition $M_N(a_1,a_2,a_3,a_4)= \sqcup{\rm cyl}_i $ of a square-tiled
cyclic cover into cylinders ${\rm cyl}_i$ filled by closed horizontal
trajectories. For every cylinder ${\rm cyl}_i$ we denote by $w_i$ its
width  (the length of each closed horizontal trajectory) and by $h_i$
its height (the length of each vertical segment).

Assuming  that the branch points $z_1,z_2,z_3,z_4$ are numbered as indicated
in Figure \ref{fig:pillow}, the widths of the corresponding cylinders
and  the  sum  of  the  heights  of  all  cylinders  are given by the
formulas:
$$
w_i=  \cfrac{2N}{\gcd(N,a_1+a_4)}    \,,\ \ \text{ for all } i \,, \text{ and } \,\,
\sum_i h_i = \gcd(N,a_1+a_4)\,.
$$
\end{lemma}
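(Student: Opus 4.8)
The plan is to read off the width of a single horizontal cylinder directly from the monodromy computation of Section~\ref{ss:Combinatorics:of:square-tiled}, to observe that this width is the same for every cylinder, and then to recover the sum of the heights by a one-line area count.

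First I would analyze a single closed horizontal trajectory. Using the enumeration of the $2N$ squares by $0,1,\dots,2N-1$ introduced above, with even labels for white squares and odd labels for black squares, the local rule recorded in Figure~\ref{fig:local:moves} states that ``moving two squares to the right'' sends a square labelled $j$ to the square labelled $j\pm 2(a_1+a_4)\pmod{2N}$, the sign depending only on whether the vertices $B,C$ sit at the bottom or at the top of the row. Restricting this map to the white squares and using the identification $2k\leftrightarrow k\in\mathbb{Z}/N\mathbb{Z}$, the two-square shift becomes translation by $\pm(a_1+a_4)$ on $\mathbb{Z}/N\mathbb{Z}$.

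Next I would compute the orbit length. On $\mathbb{Z}/N\mathbb{Z}$ every orbit of translation by $\pm(a_1+a_4)$ has the same cardinality $N/\gcd(N,a_1+a_4)$, independently of the base point and of the sign, since $+(a_1+a_4)$ and $-(a_1+a_4)$ generate the same cyclic subgroup. Consequently each horizontal cylinder contains exactly $N/\gcd(N,a_1+a_4)$ white squares and the same number of black squares, so the closed horizontal trajectory of every cylinder passes through
$$
w_i=2\cdot\frac{N}{\gcd(N,a_1+a_4)}=\frac{2N}{\gcd(N,a_1+a_4)}
$$
unit squares. This yields the first formula together with the essential fact that all the widths coincide.

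Finally I would recover $\sum_i h_i$ by counting area. The total area of $M_N(a_1,a_2,a_3,a_4)$ equals the number of unit squares, namely $2N$, and it decomposes as $\sum_i w_i h_i$ over the cylinders. Since every $w_i$ equals the common value $w=2N/\gcd(N,a_1+a_4)$, we obtain $w\sum_i h_i=2N$, hence $\sum_i h_i=\gcd(N,a_1+a_4)$. The only point requiring genuine care is the factor of two relating the two-square base pillow to the cover, together with the verification that the orientation-dependent sign in $\tau_h^{\pm1}$ leaves the orbit length unchanged; both are dispatched by the passage to $\mathbb{Z}/N\mathbb{Z}$ above, so I do not expect any serious obstacle.
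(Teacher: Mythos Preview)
Your proof is correct and follows essentially the same approach as the paper: compute the order of the horizontal shift $\tau_h=T^{a_1+a_4}$ to get the common width, then divide the total area $2N$ by that width to obtain $\sum_i h_i$. Your treatment is slightly more detailed (you explicitly handle the $\pm$ sign in $\tau_h^{\pm1}$ and pass to $\mathbb{Z}/N\mathbb{Z}$), but the underlying argument is identical.
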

\begin{proof}
Clearly,    the    operation    $\tau_h=T^{(a_1+a_4)}$    has   order
$\tfrac{N}{\gcd(N,a_1+a_4)}$.  Hence,  the  length of each horizontal
trajectory  is  equal to $\frac{2N}{\gcd(N,a_1+a_4)}$ which, in turn,
is  equal to the width of any cylinder. Since the area of the surface
is   $2N$,   the   total   height   of  all  cylinders  is  equal  to
$\gcd(N,a_1+a_4)$.
\end{proof}

\begin{remark}
It  is irrelevant  whether or not the  quadratic  differential
$q=p^\ast   q_0$   defining   the   square-tiled  flat  structure  in
Lemma \ref{lm:cylinder:width}  is  a  global  square of a holomorphic
$1$-form, or not.
\end{remark}

%--------------------------------------------------------------------
\subsection{Symmetries of cyclic covers}
\label{sec:symmetries}

We  continue  with a description of the isomorphisms of cyclic covers
(certainly known to all who worked with them). Note that in the Lemma
below we do not use any flat structure.

\begin{lemma}
\label{lm:dual}
Two    cyclic    covers    $M_N(a_1,\dots,a_4)$    and    $M_N(\tilde
a_1,\dots,\tilde  a_4)$  admit  an  isomorphism  compatible  with the
projection to $\mathbb{P}^1(\mathbb{C})$

\begin{picture}(80,70)(-80,-50)
\put(0,0){$M_N(a_1,\dots,a_4)\simeq
M_N(\tilde a_1,\dots,\tilde a_4)$}
\put(40,-10){\vector(1,-1){20}}
\put(130,-10){\vector(-1,-1){20}}
\put(70,-40){$\mathbb{P}^1(\mathbb{C})$}
\end{picture}

\noindent
if and only if for some primitive element $k\in\mathbb{Z}/N\mathbb{Z}$ one has
\begin{equation}
\label{eq:mult:by:k}
\tilde a_i=k a_i\pmod N,\quad\text{ for }\quad i=1,2,3,4\ .
\end{equation}
In particular,
\begin{equation}
\label{eq:dual}
M_N(a_1,a_2,a_3,a_4)\simeq M_N(N-a_1,N-a_2,N-a_3,N-a_4)\ .
\end{equation}
\end{lemma}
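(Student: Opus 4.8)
The plan is to encode each cyclic cover by its monodromy homomorphism and then invoke the dictionary between connected regular covers and subgroups of the fundamental group. Restricting the projection to the punctured sphere $X=\mathbb{P}^1(\mathbb{C})\setminus\{z_1,z_2,z_3,z_4\}$, the surface $M_N(a_1,\dots,a_4)$ becomes the connected regular $\mathbb{Z}/N\mathbb{Z}$-cover determined, as in the representation \eqref{eq:representations}, by the surjection
$$
\phi_a\colon\pi_1(X)\to\mathbb{Z}/N\mathbb{Z},\qquad \sigma_i\mapsto a_i,
$$
and likewise $M_N(\tilde a_1,\dots,\tilde a_4)$ is determined by $\phi_{\tilde a}$. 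Since any isomorphism compatible with the projection to $\mathbb{P}^1(\mathbb{C})$ restricts to an isomorphism of the two covers of $X$, and since such an isomorphism extends holomorphically across the finitely many branch points by Riemann's removable singularity theorem, it suffices to decide when the two covers of $X$ are isomorphic over $X$.

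For the implication ``$\Leftarrow$'' I would exhibit the isomorphism explicitly. If $\tilde a_i=k a_i\pmod N$ with $k$ primitive, then on $M_N(a_1,\dots,a_4)$ the assignment
$$
F\colon (z,w)\longmapsto (z,w^k)
$$
lands in $M_N(\tilde a_1,\dots,\tilde a_4)$, because \eqref{eq:wN:z14} gives $(w^k)^N=(w^N)^k=\prod_{i}(z-z_i)^{k a_i}=\prod_i(z-z_i)^{\tilde a_i}$. The map $F$ manifestly commutes with the projection, and on a generic fibre $\{(z,\zeta^j w_0)\}_{j}$ it acts by $j\mapsto kj\pmod N$; since $\gcd(k,N)=1$ this is a bijection of the fibre, so $F$ is a fibre-preserving biholomorphism, with inverse $(z,\tilde w)\mapsto(z,\tilde w^{k'})$ for any $k'$ with $kk'\equiv 1\pmod N$.

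For the implication ``$\Rightarrow$'' I would use that two connected regular covers of $X$ are isomorphic over $X$ exactly when they correspond to the same normal subgroup of $\pi_1(X)$, that is, when $\ker\phi_a=\ker\phi_{\tilde a}$. As $\phi_a$ and $\phi_{\tilde a}$ are surjections onto $\mathbb{Z}/N\mathbb{Z}$ with equal kernel, they induce the same quotient and hence differ by an automorphism $\psi$ of $\mathbb{Z}/N\mathbb{Z}$, namely $\phi_{\tilde a}=\psi\circ\phi_a$. Every automorphism of $\mathbb{Z}/N\mathbb{Z}$ is multiplication by a primitive element $k$, so $\tilde a_i=\psi(a_i)=k a_i\pmod N$, which is \eqref{eq:mult:by:k}. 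The distinguished isomorphism \eqref{eq:dual} then follows at once by taking $k=N-1\equiv-1$, which is primitive for every $N$.

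The routine points I expect to absorb the care are the two standard facts invoked above: that the covering isomorphism over the punctured sphere extends across the branch points to a biholomorphism of the compact surfaces, and that isomorphism classes of connected regular covers of $X$ match normal subgroups of $\pi_1(X)$. The one genuinely conceptual step is recognizing that the isomorphism need not respect the chosen generator $T$ of the deck group, but only the deck group as a whole; this is precisely what produces the automorphism $\psi$, and therefore the multiplier $k$, rather than forcing $\tilde a=a$.
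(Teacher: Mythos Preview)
Your overall strategy matches the paper's: both use the deck/monodromy representation to reduce the ``only if'' direction to the fact that automorphisms of $\mathbb{Z}/N\mathbb{Z}$ are multiplications by units, and both exhibit an explicit formula for the ``if'' direction. Your ``$\Rightarrow$'' argument via $\ker\phi_a=\ker\phi_{\tilde a}$ is correct and is a clean repackaging of the paper's deck-group argument.

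There is, however, a genuine slip in your ``$\Leftarrow$'' direction. You assert that $(z,w)\mapsto(z,w^k)$ lands on $M_N(\tilde a_1,\dots,\tilde a_4)$ because
\[
(w^k)^N=(w^N)^k=\prod_i(z-z_i)^{ka_i}=\prod_i(z-z_i)^{\tilde a_i}.
\]
The last equality is false: the hypothesis is only $\tilde a_i\equiv ka_i\pmod N$, not $\tilde a_i=ka_i$ as integers, so the two products differ by $\prod_i(z-z_i)^{m_iN}$ with $m_i=(ka_i-\tilde a_i)/N$. Consequently $(z,w^k)$ does not in general satisfy the defining equation of the target curve. The paper repairs exactly this by sending
\[
\tilde w \;=\; w^k\,(z-z_1)^{m_1}(z-z_2)^{m_2}(z-z_3)^{m_3}(z-z_4)^{m_4},
\]
which \emph{does} satisfy $\tilde w^N=\prod_i(z-z_i)^{\tilde a_i}$. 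Once you insert this correction, your fibrewise-bijection check (the action $j\mapsto kj$ on the $N$ sheets) goes through unchanged, and the rest of your write-up is fine.
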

\begin{proof}
Any  isomorphism $g$ of cyclic covers as above induces an isomorphism
$g_\ast$ of their groups of deck transformations
such that,
$$
\widetilde{\operatorname{Deck}}(\sigma_i)=g_\ast(\operatorname{Deck}(\sigma_i)) \, , \quad \text{ \rm for }i=1, \dots, 4 \,.
$$
  Since  by construction $\widetilde{\operatorname{Deck}}(\sigma_i)=\tilde  a_i$
and $\operatorname{Deck}(\sigma_i)=a_i$ in $\mathbb{Z}/N\mathbb{Z}$, for $i=1, \dots, 4$,  and any  automorphism of a cyclic
group $\mathbb{Z}/N\mathbb{Z}$ is given by the multiplication by a primitive element $k\in\mathbb{Z}/N\mathbb{Z}$,  the above relation yields formula \eqref{eq:mult:by:k}.

On  the other hand, when condition \eqref{eq:mult:by:k} is satisfied,
  % there exist
one can find
integers $m_1, \dots, m_4$ such that $\tilde a_i = ka_i +
m_i  N$,  where  $0<\tilde  a_i\le N$, and, hence, we have an obvious
isomorphism
$$
\tilde w= w^k (z-z_1)^{m_1}  (z-z_2)^{m_2} (z-z_3)^{m_3} (z-z_4)^{m_4}
$$
between                the                cyclic               covers
$w^N=(z-z_1)^{a_1}(z-z_2)^{a_2}(z-z_3)^{a_3}(z-z_4)^{a_4}$        and
${\tilde w}^N=(z-z_1)^{\tilde a_1}(z-z_2)^{\tilde a_2}(z-z_3)^{\tilde
a_3}(z-z_4)^{\tilde a_4}$.
\end{proof}

Consider  a  particular  case  when $\{a_1,\dots,a_4\}$ and $\{\tilde
a_1,\dots,\tilde  a_4\}$  coincide  as  unordered sets (possibly with
multiplicities).

\begin{definition}
\label{def:symmetry}
A permutation $\pi$ in $\mathfrak{S}_4$ is called a \textit{symmetry}
of a cyclic cover $M_N(a_1,a_2,a_3,a_4)$ if there exists an integer $k$ such that
$$
k\cdot a_i\pmod{N}=a_{\pi(i)}\quad\text{for }\ i=1,2,3,4\,.
$$
\end{definition}

%------------------------------------------------------------------
\subsection{Veech group of a square-tiled cyclic cover}
\label{ss:Teichmuller:curve:associated:to:a:cyclic:cover}

The  group  $\textrm{SL}(2,\mathbb{R})$ and $\textrm{PSL}(2,\mathbb{R})$  acts  naturally  on any
stratum   of   holomorphic   $1$-forms and, respectively, meromorphic
quadratic differentials with at most simple poles. The \textit{Veech
group}  $\Gamma(S)$  of  a  flat surface $S$ is the stabilizer of the
corresponding point of the stratum under this action. In this section
we   study  the  Veech  groups  of  square-tiled  cyclic  covers.  In
particular, we prove the following statement.

\begin{theorem}
\label{th:SLZ:orbit}
The  Veech  group $\Gamma(S)$
of any square-tiled cyclic cover $S$
contains the group $\Gamma(2)$
(respectively $\Gamma(2)/(\pm\operatorname{Id})$)
as a subgroup.
The  Veech  group $\Gamma(S)$
has one of the indices $1$, $2$,
$3$ or $6$ in $\textrm{SL}(2,\mathbb{Z})$ (respectively in $\textrm{PSL}(2,\mathbb{Z})$).

If the Veech groups $\Gamma(S_1)$
and $\Gamma(S_2)$ of two square-tiled cyclic covers
$S_1, S_2$ have the same index in $\textrm{SL}(2,\mathbb{Z})$ (respectively in $\textrm{PSL}(2,\mathbb{Z})$),
then $\Gamma(S_1)$ and $\Gamma(S_2)$ are conjugate.
\end{theorem}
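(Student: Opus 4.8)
The plan is to show that $\Gamma(2)$ (respectively its image in $\textrm{PSL}(2,\mathbb{Z})$) sits inside every Veech group $\Gamma(S)$ of a square-tiled cyclic cover, and then to read off both the index and the conjugacy statements from the structure of the finite quotient $\textrm{SL}(2,\mathbb{Z})/\Gamma(2)\cong\textrm{SL}(2,\mathbb{Z}/2\mathbb{Z})\cong\mathfrak{S}_3$.

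First I would fix the geometry of the base. The square pillow $(\mathbb{P}^1(\mathbb{C}),q_0)$ of Figure~\ref{fig:pillow} is the quotient of the standard square torus $\mathbb{R}^2/\mathbb{Z}^2$ by the elliptic involution $\pm\operatorname{Id}$, its four cone points of angle $\pi$ being the images of the four $2$-torsion points. The linear $\textrm{SL}(2,\mathbb{Z})$-action on the torus commutes with $\pm\operatorname{Id}$ and hence descends to affine automorphisms of the pillow; on the set of cone points it acts through the reduction $\textrm{SL}(2,\mathbb{Z})\to\textrm{SL}(2,\mathbb{Z}/2\mathbb{Z})\cong\mathfrak{S}_3$. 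In particular every $A\in\Gamma(2)$, being $\equiv\operatorname{Id}\pmod 2$, fixes all four branch points $z_1,\dots,z_4$.

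The key step is the lift. Given $A\in\Gamma(2)$, let $\psi_A$ be the corresponding linear automorphism of the pillow. Since $\psi_A$ fixes each $z_i$ and is orientation-preserving, the induced map on $\pi_1(\mathbb{P}^1(\mathbb{C})\setminus\{z_1,\dots,z_4\})$ sends each generator $\sigma_i$ to a conjugate of itself; as the monodromy representation $\operatorname{Deck}\colon\sigma_i\mapsto a_i$ of \eqref{eq:representations} takes values in the abelian group $\mathbb{Z}/N\mathbb{Z}$, it is therefore preserved, $\operatorname{Deck}\circ(\psi_A)_\ast=\operatorname{Deck}$. Hence $\psi_A$ lifts to an affine automorphism of $M_N(a_1,a_2,a_3,a_4)$ with derivative $A$, so $A\in\Gamma(S)$. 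In the Abelian case one first composes the lift with a suitable power of the deck transformation $T$, which has trivial derivative and satisfies $T^\ast\omega=-\omega$ by Lemma~\ref{lm:minus:omega}, to arrange that $\omega$ (and not $-\omega$) is preserved. This proves $\Gamma(2)\subseteq\Gamma(S)$; since $-\operatorname{Id}\equiv\operatorname{Id}\pmod 2$ lies in $\Gamma(2)$, the half-translation ($\textrm{PSL}$) version follows verbatim.

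Finally I would harvest the consequences from $\mathfrak{S}_3$. Because $\Gamma(2)$ is normal in $\textrm{SL}(2,\mathbb{Z})$ with quotient $\mathfrak{S}_3$ of order $6$, the inclusion $\Gamma(2)\subseteq\Gamma(S)$ identifies $\Gamma(S)/\Gamma(2)$ with a subgroup of $\mathfrak{S}_3$, and $[\textrm{SL}(2,\mathbb{Z}):\Gamma(S)]$ equals its index in $\mathfrak{S}_3$, which is $1,2,3$ or $6$ by the tower law. For the conjugacy statement, normality of $\Gamma(2)$ makes conjugation by $g\in\textrm{SL}(2,\mathbb{Z})$ descend to conjugation by $\bar g$ in $\mathfrak{S}_3$, so two subgroups containing $\Gamma(2)$ are conjugate in $\textrm{SL}(2,\mathbb{Z})$ if and only if their images are conjugate in $\mathfrak{S}_3$. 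One then checks directly that subgroups of $\mathfrak{S}_3$ of equal order are conjugate: the trivial and the full subgroup are unique, the unique order-$3$ subgroup $A_3$ is normal, and the three order-$2$ subgroups form a single conjugacy class. Since equal index means equal order of image, $\Gamma(S_1)$ and $\Gamma(S_2)$ are conjugate; the $\textrm{PSL}$-case is identical because $-\operatorname{Id}\in\Gamma(2)$ makes the relevant quotient again $\mathfrak{S}_3$. The one genuinely geometric point, and the main obstacle, is the lifting step of the third paragraph; everything else is bookkeeping in $\mathfrak{S}_3$.
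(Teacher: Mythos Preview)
Your argument is correct and follows essentially the same route as the paper: establish $\Gamma(2)\subseteq\Gamma(S)$, then read everything off from the quotient $\textrm{SL}(2,\mathbb{Z})/\Gamma(2)\cong\mathfrak{S}_3$. Your monodromy-based lifting argument (each $\sigma_i$ goes to a conjugate of itself, the deck representation is abelian-valued, hence preserved) is a clean and explicit version of what the paper packages into Lemma~\ref{l.Lemma2-5} together with the observation that the $\textrm{PSL}(2,\mathbb{Z})$-action on square-tiled cyclic covers factors through the action on the ``pillow with marked corners''. The sign-fixing in the Abelian case via $T^\ast\omega=-\omega$ is exactly right.

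One point to flag: the paper regards the realization of \emph{all four} indices $1,2,3,6$ as part of Theorem~\ref{th:SLZ:orbit}, and supplies explicit examples for each (e.g.\ $M_4(1,1,1,1)$, $M_{14}(1,9,11,7)$, $M_{10}(1,1,3,5)$, $M_8(1,3,5,7)$) via the sharper Theorem~\ref{thm:1prime}. Your argument shows only that the index \emph{lies in} $\{1,2,3,6\}$, which is what the statement literally says; if you read the theorem as also asserting that every value occurs, you would need to add such examples. Otherwise the two proofs differ only in presentation.
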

\begin{remark}A proper subgroup of $\textrm{SL}(2,\mathbb{Z})$ containing $\Gamma(2)$ is said to be a \emph{congruence subgroup of level two}. It is not hard to check that the conjugation class of a congruence subgroup of level two is uniquely determined by its index. An example of a congruence subgroup of $\textrm{SL}(2,\mathbb{Z})$ of level two with index $3$ is
$$\Gamma_0(2)=\left\{\left(\begin{array}{cc}a & b \\ c & d\end{array}\right): a\equiv d\equiv 1 (\textrm{mod } 2), c\equiv 0 (\textrm{mod }2)\right\}.$$In the literature, the congruence subgroups of level two of $\textrm{SL}(2,\mathbb{Z})$ are called \emph{theta groups} (sometimes denoted $\Theta$).
\end{remark}

Theorem \ref{th:SLZ:orbit}  will  be  derived  from
Lemma \ref{l.Lemma2-5} and from a  more  precise Theorem \ref{thm:1prime}.

Sometimes  it  is convenient to ``mark'' (in other words ``label'' or
``give  names  to'')  the  zeroes  (and  the  simple  poles)  of  the
corresponding   holomorphic   $1$-form   (respectively,   quadratic
differential).  In  this way, one gets the strata of marked (in other
words ``labeled'', ``named''), flat surfaces. The action of the group
$\textrm{SL}(2,\mathbb{R})$ (respectively $\textrm{PSL}(2,\mathbb{R})$) on the strata of marked flat surfaces,
and  the  Veech  group $\Gamma(S_{\mathit{marked}})$ of a marked flat
surface $S_{\mathit{marked}}$ are defined analogously.

Let  the  flat  surface $S$ be a ``unit square pillow'', that is, let
$S$      be      $\mathbb{P}^1(\mathbb{C})$      endowed      with      the      quadratic
differential \eqref{eq:q:on:CP1},  where the parameters are chosen in
such  way  that  the flat surface $S$ is glued from two unit squares,
and  their  sides  are  vertical  and  horizontal.  The  Veech  group
$\Gamma(S)$ of such a flat surface $S$ coincides with $\textrm{PSL}(2,\mathbb{Z})$. We will
need  the  following  elementary  Lemma for the marked version of the
latter surface.

\begin{lemma}\label{l.Lemma2-5}
The  action  of  the  group $\textrm{PSL}(2,\mathbb{Z})$ on the ``unit square pillow with
marked  corners''  factors  through  the  free  action of the group
$\mathfrak{S}_3$  of  permutations  of  three  elements  by  means of the
surjective homomorphism
\begin{equation}
\label{eq:homomorphism:to:symmetric:group}
\textrm{PSL}(2,\mathbb{Z})\to\operatorname{PSL}(2,\mathbb{Z}/2\mathbb{Z})\simeq\mathfrak{S}_3\,.
\end{equation}
In  particular,  the Veech group $\Gamma(S_{\mathit{marked}})$ of the
``unit  square  pillow  with  marked  corners'' $S_{\mathit{marked}}$
coincides with the kernel $\Gamma(2)/(\pm\operatorname{Id})$ of this homomorphism.
\end{lemma}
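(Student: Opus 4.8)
The plan is to identify the unit square pillow with the quotient of a flat torus by its hyperelliptic involution and to read off the action on the four corners from the induced action on $2$-torsion. Write $\mathbb{T}=\mathbb{C}/\mathbb{Z}^2$ and $\iota\colon z\mapsto -z$; the quotient $\mathbb{T}/\iota$, equipped with the descended quadratic differential $(dz)^2$, is a sphere with four cone points of angle $\pi$, and after a harmless rescaling it is the pillow $S$. The four cone points are the images of the fixed points of $\iota$, that is, the $2$-torsion points $\tfrac12\mathbb{Z}^2/\mathbb{Z}^2\cong(\mathbb{Z}/2\mathbb{Z})^2$ of $\mathbb{T}$; these are exactly the marked corners. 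The unmarked Veech group is $\textrm{PSL}(2,\mathbb{Z})$ (as recalled just above), realized by the linear automorphisms $z\mapsto Az$, $A\in\textrm{SL}(2,\mathbb{Z})$, of $\mathbb{T}$: these commute with $\iota$, descend to $S$, and, since $-\operatorname{Id}$ descends to $\iota$ which acts trivially on $S$, their linear part is well defined only modulo $\pm\operatorname{Id}$.

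First I would compute the induced permutation of the corners. The map $z\mapsto Az$ sends a $2$-torsion point $\tfrac12 v$, $v\in\mathbb{Z}^2$, to $\tfrac12 Av$, and modulo $\mathbb{Z}^2$ this depends only on the reductions of $A$ and $v$ modulo two. Hence the corner action factors through reduction modulo two, $\textrm{SL}(2,\mathbb{Z})\to\textrm{SL}(2,\mathbb{Z}/2\mathbb{Z})$, acting linearly on $(\mathbb{Z}/2\mathbb{Z})^2$. The zero vector (the ``origin corner'') is fixed by everything, while the three nonzero vectors are permuted; since over $\mathbb{F}_2$ one has $\textrm{SL}(2,\mathbb{Z}/2\mathbb{Z})=\textrm{GL}(2,\mathbb{Z}/2\mathbb{Z})$ and any nonidentity matrix moves some vector, this permutation action is faithful and yields the isomorphism $\textrm{SL}(2,\mathbb{Z}/2\mathbb{Z})=\textrm{PSL}(2,\mathbb{Z}/2\mathbb{Z})\cong\mathfrak{S}_3$ appearing in \eqref{eq:homomorphism:to:symmetric:group}. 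Surjectivity of reduction modulo two (checked, say, on the standard generators of $\textrm{SL}(2,\mathbb{Z})$) makes the homomorphism onto. Its kernel is $\Gamma(2)$ by definition, and since $-\operatorname{Id}\equiv\operatorname{Id}\pmod 2$ lies in $\Gamma(2)$, the kernel of the induced map $\textrm{PSL}(2,\mathbb{Z})\to\mathfrak{S}_3$ is precisely $\Gamma(2)/(\pm\operatorname{Id})$.

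Finally I would match the marked Veech group to this kernel. By definition $\Gamma(S_{\mathit{marked}})$ is the set of elements of $\textrm{PSL}(2,\mathbb{Z})$ admitting an affine representative fixing each marked corner \emph{pointwise}. Lifting such a representative to $\mathbb{T}$ produces an affine map $z\mapsto Az+b$ commuting with $\iota$, which forces $2b\in\mathbb{Z}^2$, i.e.\ $b$ is $2$-torsion; requiring the origin corner to be fixed forces $b\equiv 0$, so the representative is linear, and then fixing the remaining three corners forces $\bar A=\operatorname{Id}$ in $\textrm{SL}(2,\mathbb{Z}/2\mathbb{Z})$, i.e.\ $A\in\Gamma(2)$. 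Conversely every $A\in\Gamma(2)$ fixes all four corners, so $\Gamma(S_{\mathit{marked}})=\Gamma(2)/(\pm\operatorname{Id})$. The step requiring the most care is exactly this last one: one must keep track of the translational part $b$ of the affine lift and verify that demanding all four markings be fixed pointwise --- rather than merely preserved as a set --- collapses $b$ to zero, so that the stabilizer is pinned down to $\Gamma(2)/(\pm\operatorname{Id})$ and is not enlarged by affine maps with nonzero translational part (which, over $\mathbb{F}_2$, would otherwise realize all of $\mathfrak{S}_4$ on the four corners).
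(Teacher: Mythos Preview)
Your proof is correct and takes a somewhat different, more algebraic route than the paper. You model the pillow as $\mathbb{T}/\iota$ and read off the corner action as the linear action of $\textrm{SL}(2,\mathbb{Z})$ on the $2$-torsion $(\mathbb{Z}/2\mathbb{Z})^2$; the kernel is then $\Gamma(2)$ \emph{by definition} of the principal congruence subgroup, and freeness follows from faithfulness of $\textrm{SL}(2,\mathbb{Z}/2\mathbb{Z})$ on the three nonzero vectors. The paper instead works intrinsically on the pillow: it introduces the Klein four-group $\mathfrak{K}\subset\mathfrak{S}_4$ of ``pillow symmetries'' (your translations by $2$-torsion), observes that the induced action on markings is only well defined modulo $\mathfrak{K}$, hence lands in $\mathfrak{S}_4/\mathfrak{K}\simeq\mathfrak{S}_3$, and then checks by hand that the generators $\begin{pmatrix}1&2\\0&1\end{pmatrix}$, $\begin{pmatrix}1&0\\2&1\end{pmatrix}$ of $\Gamma(2)/(\pm\operatorname{Id})$ fix all labels. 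Your approach is cleaner for this lemma in isolation; the paper's approach has the advantage that the Klein group $\mathfrak{K}$ and the projection $\mathfrak{S}_4\to\mathfrak{S}_3$ are exactly the objects used immediately afterward (in the discussion of symmetries of cyclic covers and of the index of the Veech group), so they would need to be introduced anyway. The two pictures match up precisely: your distinguished origin corresponds to the paper's convention of using a pillow symmetry to pin one label, and your translational parts $b$ are the elements of $\mathfrak{K}$.
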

\begin{proof}
Let  the labels of the corners of the pillow be $A, B, C, D$, say, as
in  Figure \ref{fig:pillow}.  An  element  $g$ of $\textrm{PSL}(2,\mathbb{Z})$ acts on the
flat  surface  $(\mathbb{P}^1(\mathbb{C}),  q_0)$, giving a new flat surface isomorphic to
the original one. It can still be obtained by gluing two squares, but
the  labels  $A,  B,  C,  D$  have moved around.
\begin{figure}[hbt]
\includegraphics{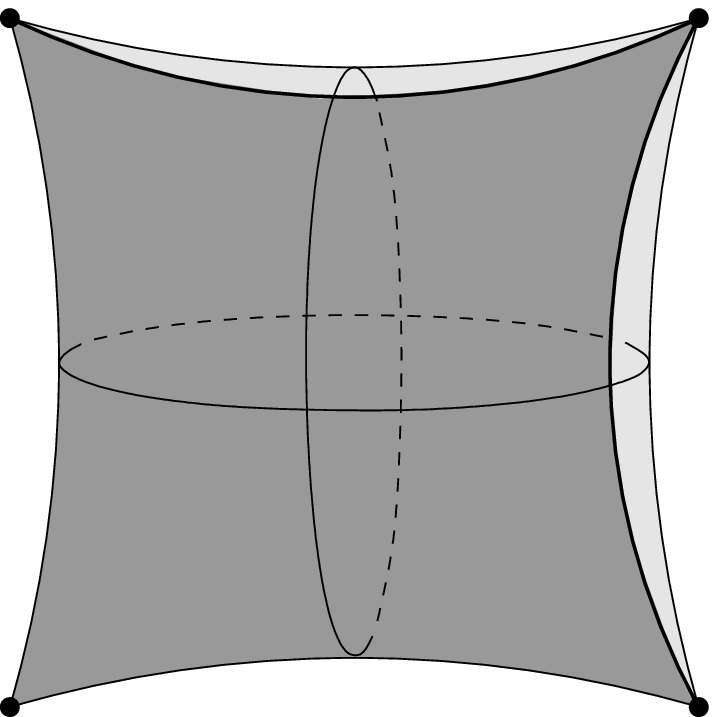}
\includegraphics{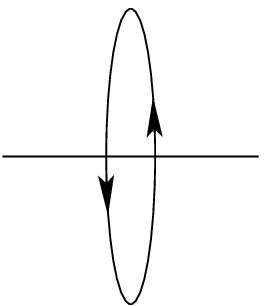}
\includegraphics{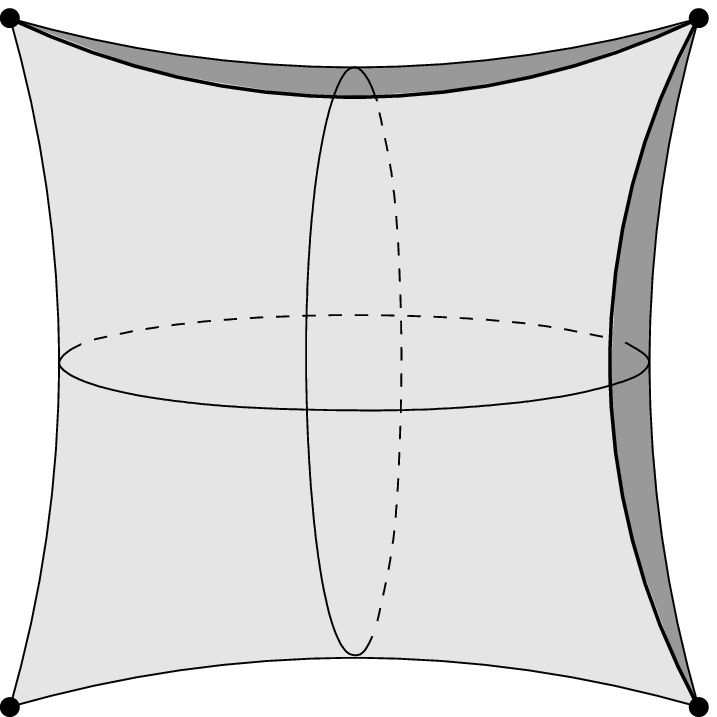}
\includegraphics{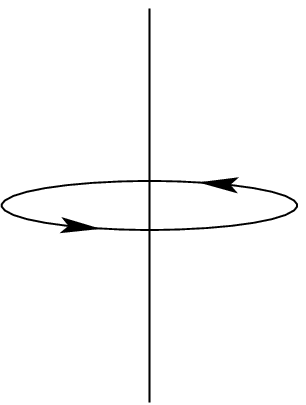}
\includegraphics{pillow_black.eps}
\begin{picture}(0,0)(0,0)
\begin{picture}(0,0)(225,0)
\begin{picture}(0,0)(0,0)
\put(61,-3){$z_1$}
\put(61,-71){$z_2$}
\put(134,-71){$z_3$}
\put(134,-3){$z_4$}
\put(76,-19){$B$}
\put(76,-60){$A$}
\put(116,-60){$D$}
\put(116,-19){$C$}
\end{picture}
\end{picture}
\begin{picture}(0,0)(102,0)
\begin{picture}(0,0)(0,0)
\put(61,-3){$z_1$}
\put(61,-71){$z_2$}
\put(134,-71){$z_3$}
\put(134,-3){$z_4$}
\put(76,-19){$A$}
\put(76,-60){$B$}
\put(116,-60){$C$}
\put(116,-19){$D$}
\end{picture}
\end{picture}
\begin{picture}(0,0)(-20,0)
\begin{picture}(0,0)(0,0)
\put(61,-3){$z_1$}
\put(61,-71){$z_2$}
\put(134,-71){$z_3$}
\put(134,-3){$z_4$}
\put(76,-19){$D$}
\put(76,-60){$C$}
\put(116,-60){$B$}
\put(116,-19){$A$}
\end{picture}
\end{picture}
\end{picture}
\vspace{70bp}
\caption{
\label{fig:pillow:symmetries}
Symmetries of the flat surface $\left(\mathbb{P}^1(\mathbb{C}),q_0\right)$.
}
\end{figure}
Using an appropriate
``pillow symmetry'' as in Figure \ref{fig:pillow:symmetries}, one can
always  move  back one chosen label to the original position, but the
other  labels  $B,C,D$  are  not  fixed.  Speaking more formally, the
``pillow symmetries'' define the normal subgroup
\begin{equation}
\label{eq:Klein:subgroup}
\mathfrak{K}=\big\{ ()\,;(1,2)(3,4)\,;(1,3)(2,4)\,;(1,4)(2,3)\big\}
\end{equation}
of  the symmetric group $\mathfrak{S}_4$. The subgroup $\mathfrak{K}$
is  isomorphic  to  the Klein group. The quotient of $\mathfrak{S}_4$
over   $\mathfrak{K}$   is   isomorphic   to   the   symmetric  group
$\mathfrak{S}_3$. The projection
\begin{equation}
\label{eq:projection:S4:to:S3}
\mathfrak{S}_4\to\mathfrak{S}_3\simeq\mathfrak{S}_4/\mathfrak{K}
\end{equation}
is  not  canonical,  since it depends on the choice of the fixed label.
However, the \textit{conjugacy class} of the image of any element is defined canonically.

Note  that  the ``pillow symmetries'' are diffeomorphisms of our flat
sphere with  differential  $\pm\operatorname{Id}$  in flat coordinates.
Thus,  all  ``unit  square  pillows  with marked corners'' related by
``pillow  symmetries''  define  one and the same point of the stratum
$\mathcal{Q}_{\mathit{marked}}(-1,-1,-1,-1)$.

It  is easy to see that under the convention that $\textrm{PSL}(2,\mathbb{Z})$ keeps
one of the labels fixed, the elements $\begin{pmatrix}1&2\\0&1\end{pmatrix}$
and $\begin{pmatrix}1&0\\2&1\end{pmatrix}$  of $\textrm{PSL}(2,\mathbb{Z})$ fix \text{all} the
labels,    and,    thus,    belong to the Veech group of
$S_{\mathit{marked}}$.  It  is  a  well-known fact  that the above two
elements    generate   the   kernel   $\Gamma(2)/(\pm\operatorname{Id})$   of   the
homomorphism of formula \eqref{eq:homomorphism:to:symmetric:group},
and that the group $\operatorname{PSL}(2,\mathbb{Z}/2\mathbb{Z})=\operatorname{SL}(2,\mathbb{Z}/2\mathbb{Z})$ is
isomorphic to $\mathfrak{S}_3$.

It  is  also  easy  to  check that under this identification the
elements of $\mathfrak{S}_3$ act by the corresponding permutations of the
three  ``free''  labels; in particular, the action of $\mathfrak{S}_3$ on
the six distinct ``unit square pillows with marked corners'' is free.
\end{proof}

Let  $S$  be  a  cyclic  cover of a type $M_N(a_1,\dots,a_4)$ endowed with the flat
structure  induced  from  the  flat  structure \eqref{eq:q:on:CP1} on
$\mathbb{P}^1(\mathbb{C})$.  As  always, we assume that when $N$ is even and all $a_i$ are
odd,  the  flat structure on $S$ is defined by the holomorphic $1$-form
$\omega$, such that $\omega^2=p^\ast q_0$; otherwise it is defined by
the quadratic differential $q=p^\ast q_0$.

It  is  easy  to  see,  that  for  any $g$ in $\textrm{SL}(2,\mathbb{R})$ (respectively, for any
$g$ in $\textrm{PSL}(2,\mathbb{R})$), the image $\tilde S:=gS$ is also represented by a cyclic
cover. Moreover, any affine diffeomorphism $A_g:S\to \tilde S$ of the
corresponding  flat  surfaces  intertwines the action of the group of
the  deck  transformations  on $S$ and $\tilde S$, that is, $A_g\circ
T=\tilde  T\circ A_g$. Similarly, for any $g\in\textrm{SL}(2,\mathbb{Z})$ (respectively, for any
$g\in\textrm{PSL}(2,\mathbb{Z})$)  the image of a \textit{square-tiled} cyclic cover under
the action of $g$ is again a \textit{square-tiled} cyclic cover.

In the rest of this section (and, basically, in the remaining part of
the  paper)  we consider only \textit{square-tiled} cyclic covers, in
particular,  to  avoid  cumbersome  notations,  we denote by $M_N(a_1,a_2,a_3,a_4)$ a
cyclic  cover  endowed  with  the  flat  structure  induced  from the
``square   pillow''   as   in   Figure \ref{fig:pillow}.  Under  this  convention,
the order of the entries $a_1,\dots,a_4$ matters in the definition of a  square-tiled  cyclic
cover  $M_N(a_1,a_2,a_3,a_4)$.  However, the square-tiled cyclic covers
$$
M_N(a_1,a_2,a_3,a_4), M_N(a_2,a_1,a_4,a_3),
M_N(a_4,a_3,a_2,a_1), \text{ and } M_N(a_3,a_4,a_1,a_2)\,,
$$
related  by ``pillow symmetries''  (see formula \eqref{eq:Klein:subgroup})
define the same flat surface.

The  fact  that  the square-tiled cyclic cover $M_N(a_3,a_4,a_1,a_2)$
defines  the same flat surface as $M_N(a_1,a_2,a_3,a_4)$ implies, in particular, that
in the case when the flat structure on a square-tiled cyclic cover is
defined      by     a     holomorphic     $1$-form,     the     element
$\begin{pmatrix}-1&0\\0&-1\end{pmatrix}$  of  $\textrm{SL}(2,\mathbb{Z})$  belongs  to the
Veech  group  of the corresponding flat surface. Hence, the action of
$\textrm{SL}(2,\mathbb{Z})$  on square-tiled cyclic covers factors through the action of
$\textrm{PSL}(2,\mathbb{Z})$.  We  shall  sometimes  consider the latter action
without specifying it explicitly.

It  is easy to see, that if an element $g\in\textrm{PSL}(2,\mathbb{Z})$ permutes the
marking  of the initial ``unit square pillow with marked corners'' by
a  permutation  $\pi\in\mathfrak{S}_4$,  then the square-tiled cyclic
cover  $M_N(a_1,a_2,a_3,a_4)$  is  mapped  by  $g$  to  the square-tiled cyclic cover
$M_N(a_{\pi(1)},a_{\pi(2)},a_{\pi(3)},a_{\pi(4)})$.     Since     the
elements   of   the   subgroup   $\mathfrak{K}$  of  $\mathfrak{S}_4$
correspond to isomorphic square-tiled cyclic covers, we conclude that
the  action  of  $\textrm{SL}(2,\mathbb{Z})$  (respectively of $\textrm{PSL}(2,\mathbb{Z})$) on square-tiled
cyclic  covers  factors through the action on ``unit square pillows
with  marked corners''.

By combining the latter observation with Lemma \ref{l.Lemma2-5} we
conclude that the Veech group of $M_N(a_1,a_2,a_3,a_4)$ contains the
group $\Gamma(2)$ (respectively
the group
$\Gamma(2)/(\pm\operatorname{Id})$),
that it has index \textit{at most} $6$ in $\textrm{SL}(2,\mathbb{Z})$
(respectively, in $\textrm{PSL}(2,\mathbb{Z})$) and that it is determined by
its index up to conjugation.

To complete the proof of Theorem \ref{th:SLZ:orbit}
it remains to prove that all indices $1$, $2$, $3$, $6$ are realized.

In order to  describe  this  Veech  group more precisely we need the
following remark. Let
$$
\hat f:M_N(a_1,a_2,a_3,a_4)\to M_N(\tilde a_1,\tilde a_2,\tilde a_3,\tilde a_4)
$$
be   an   isomorphism  of  square-tiled  cyclic  covers,  that  is,  a
diffeomorphism  with differential  equal to $\operatorname{Id}$  (respectively
$\pm\operatorname{Id}$) in flat coordinates. It is not hard to see that $\hat f$ is part of a commutative diagram   %
$$
\begin{CD}
M_N(a_1,a_2,a_3,a_4) @>\hat f>> M_N(\tilde a_1,\tilde a_2,\tilde a_3,\tilde a_4)\\
@VVpV @VV pV\\
(\mathbb{P}^1(\mathbb{C}),q_0) @>f>> (\mathbb{P}^1(\mathbb{C}),q_0)\,,
\end{CD}
$$
where  $p$  is  the  canonical  projection and $f:\mathbb{P}^1(\mathbb{C})\to\mathbb{P}^1(\mathbb{C})$ is an
automorphism  of  the underlying flat sphere. We have seen that the
only  automorphisms  of  the  ``square  pillow''  $(\mathbb{P}^1(\mathbb{C}),q_0)$ are the
``pillow  symmetries'' (see  Figure \ref{fig:pillow:symmetries}). It follows from
formula \eqref{eq:Klein:subgroup} that all ``pillow  symmetries'', that is, all elements
of the Klein group $\mathfrak{K}$ are involutions.  Let $s$  be  the  the ``pillow  symmetry''
corresponding to the automorphism $f:\mathbb{P}^1(\mathbb{C})\to\mathbb{P}^1(\mathbb{C})$ or, equivalently, to its inverse.
Let it act on the canonical  labeling  of  the  corners  of the pillow by a permutation
$\varkappa\in\mathfrak{K}$.  Let $\hat s$ be the induced automorphism of
$M_N(\tilde a_1,\tilde a_2,\tilde a_3,\tilde a_4)$.
By definition of $\hat s$ the diagram
$$
\begin{CD}
M_N(\tilde a_1,\tilde a_2,\tilde a_3,\tilde a_4)
@>\hat s>> M_N(\tilde a_{\kappa(1)},\tilde a_{\kappa(2)},\tilde a_{\kappa(3)},\tilde a_{\kappa(4)})
\\
@VVpV  @VV\tilde pV\\
(\mathbb{P}^1(\mathbb{C}),q_0) @>s>> (\mathbb{P}^1(\mathbb{C}),q_0)
\end{CD}
$$
commutes. Note that by construction the composition $f\circ s$ is the
identity  map,  which allows us to merge the two commutative diagrams
above into the commutative diagram

\begin{equation}
\label{eq:commut:triangle}
\begin{picture}(200,0)%(80,70)%(-80,-50)
\put(0,0){$M_N(a_1,\dots,a_4)\simeq M_N(\tilde a_{\kappa(1)},\tilde a_{\kappa(2)},\tilde a_{\kappa(3)},\tilde a_{\kappa(4)})$}
\put(50,-10){\vector(1,-1){20}}
\put(140,-10){\vector(-1,-1){20}}
\put(80,-40){$\mathbb{P}^1(\mathbb{C})$\,.}
\end{picture}
\vspace*{1.8cm}
\end{equation}

Let us consider  the case  when $\{a_1,\dots,a_4\}$ and $\{\tilde
a_1,\dots,\tilde  a_4\}$  coincide  as  unor\-dered sets (possibly with
multiplicities). Since all  symmetries of  cyclic  covers such as
those in formula \eqref{eq:commut:triangle}   are  described  by
Definition \ref{def:symmetry}     and     Lemma \ref{lm:dual},    our
considerations imply the following statement.

\begin{lemma}
\label{lm:symmetry}
Consider  $\pi\in\mathfrak{S}_4$.  The square-tiled  cyclic  covers
$M_N(a_1,a_2,a_3,a_4)$ and
$M_N(a_{\pi(1)},a_{\pi(2)},a_{\pi(3)},a_{\pi(4)})$   are   isomorphic
(that  is  define the same point of the corresponding stratum) if and
only  if  there  exists  a  symmetry  $\pi'$
of the cyclic cover
$M_N(a_1,a_2,a_3,a_4)$ such that the
permutation   $\pi'\cdot\pi^{-1}$   belongs  to  the  Klein  subgroup
$\mathfrak{K}$ defined in formula \eqref{eq:Klein:subgroup}.
\end{lemma}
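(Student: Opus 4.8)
The plan is to reduce the flexible notion of "defining the same point of the stratum" to the rigid, projection-compatible isomorphisms governed by Lemma~\ref{lm:dual}, at the cost of a single pillow symmetry. Write $a=(a_1,\dots,a_4)$ and, for $\sigma\in\mathfrak{S}_4$, set $a\circ\sigma:=(a_{\sigma(1)},\dots,a_{\sigma(4)})$, so that the cover to be compared with $M_N(a)$ is $M_N(a\circ\pi)$ and the bookkeeping rule $(a\circ\sigma)\circ\tau=a\circ(\sigma\tau)$ holds. Two ingredients are already available. First, the Klein group acts trivially on flat surfaces: by the list of pillow symmetries following formula~\eqref{eq:Klein:subgroup}, $M_N(b)$ and $M_N(b\circ\kappa)$ define the same flat surface for every tuple $b$ and every $\kappa\in\mathfrak{K}$. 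Second, by the commutative triangle~\eqref{eq:commut:triangle}, any isomorphism of square-tiled cyclic covers becomes compatible with the projection to $\mathbb{P}^1(\mathbb{C})$ after composition with a suitable pillow symmetry $\kappa\in\mathfrak{K}$.

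Combining these with Lemma~\ref{lm:dual}, I would first record the working criterion: $M_N(a)$ and $M_N(a\circ\pi)$ define the same flat surface if and only if there exist $\kappa\in\mathfrak{K}$ and a primitive $k\in\mathbb{Z}/N\mathbb{Z}$ with $(a\circ\pi\kappa)_i=k\,a_i\pmod N$ for all $i$. In the forward direction an isomorphism $M_N(a)\simeq M_N(a\circ\pi)$ yields, through~\eqref{eq:commut:triangle}, a projection-compatible isomorphism $M_N(a)\simeq M_N((a\circ\pi)\circ\kappa)=M_N(a\circ(\pi\kappa))$, to which Lemma~\ref{lm:dual} applies and supplies the primitive $k$; conversely such a $k$ produces a projection-compatible isomorphism $M_N(a)\simeq M_N(a\circ\pi\kappa)$ by Lemma~\ref{lm:dual}, and the pillow symmetry then identifies $M_N(a\circ\pi\kappa)$ with $M_N(a\circ\pi)$.

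By Definition~\ref{def:symmetry}, the condition $a_{(\pi\kappa)(i)}=k\,a_i$ says exactly that $\pi':=\pi\kappa$ is a symmetry of $M_N(a)$. (One checks that the multiplier $k$ is automatically primitive: if a prime $p$ divided $\gcd(k,N)$, then $a_{\pi'(i)}\equiv 0\pmod p$ for every $i$, hence all $a_i\equiv 0\pmod p$, contradicting $\gcd(N,a_1,\dots,a_4)=1$.) Thus the criterion becomes: the two covers are isomorphic if and only if the coset $\pi\mathfrak{K}$ contains a symmetry. It remains only to rewrite ``$\pi'\in\pi\mathfrak{K}$'' as the stated ``$\pi'\cdot\pi^{-1}\in\mathfrak{K}$'', and this is the one structural fact that does the work: $\mathfrak{K}$ is a normal subgroup of $\mathfrak{S}_4$ (noted right after~\eqref{eq:Klein:subgroup}), so $\pi\mathfrak{K}=\mathfrak{K}\pi$. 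Hence $\pi'=\pi\kappa$ for some $\kappa\in\mathfrak{K}$ forces $\pi'\pi^{-1}=\pi\kappa\pi^{-1}\in\mathfrak{K}$, and conversely a symmetry $\pi'$ with $\pi'\pi^{-1}=\varkappa\in\mathfrak{K}$ can be written $\pi'=\pi\kappa$ with $\kappa:=\pi^{-1}\varkappa\pi\in\mathfrak{K}$, closing both implications.

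There is no serious obstacle here, since Lemma~\ref{lm:dual}, the pillow-symmetry identification, and the normality of $\mathfrak{K}$ carry the entire argument; the only things to watch are the composition-order bookkeeping for the right action of $\mathfrak{S}_4$ on the tuples and the harmless verification that the symmetry multiplier is invertible modulo $N$.
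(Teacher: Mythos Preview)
Your proposal is correct and follows the same approach as the paper: the paper does not give a separate formal proof but states that the lemma follows from the preceding considerations (the commutative triangle~\eqref{eq:commut:triangle}, Lemma~\ref{lm:dual}, and Definition~\ref{def:symmetry}), and you have spelled out precisely those considerations in detail. Your extra care with the right-action bookkeeping, the primitivity of the multiplier, and the use of normality of $\mathfrak{K}$ to pass between $\pi\mathfrak{K}$ and $\mathfrak{K}\pi$ is all appropriate and fills in details the paper leaves implicit.
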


By passing to  the quotient $\mathfrak{S}_3\simeq\mathfrak{S}_4/\mathfrak{K}$
we get the following immediate Corollary of the Lemma above.

\begin{corollary}
\label{cor:symmetries}
For any square-tiled cyclic cover $M_N(a_1,a_2,a_3,a_4)$ the index of
its Veech group in $\textrm{SL}(2,\mathbb{Z})$  (in
$\textrm{PSL}(2,\mathbb{Z})$ when the flat structure is defined by a
quadratic differential) coincides with the index of the image of the
subgroup of symmetries of $M_N(a_1,a_2,a_3,a_4)$ in $\mathfrak{S}_3$
under the projection of formula \eqref{eq:projection:S4:to:S3}.
\end{corollary}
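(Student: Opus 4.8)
The plan is to translate the defining condition of the Veech group into the symmetric group $\mathfrak{S}_3$ by means of the homomorphism $\phi\colon\textrm{PSL}(2,\mathbb{Z})\to\mathfrak{S}_3$ supplied by Lemma \ref{l.Lemma2-5}, and then simply read off the index. Recall that $\phi$ sends an element $g$ to the permutation it induces on the corner labels $A,B,C,D$; this permutation is well defined only modulo the Klein subgroup $\mathfrak{K}$ of pillow symmetries, hence $\phi(g)$ is a well-defined class in $\mathfrak{S}_4/\mathfrak{K}\simeq\mathfrak{S}_3$. By Lemma \ref{l.Lemma2-5} the map $\phi$ is surjective with kernel $\Gamma(2)/(\pm\operatorname{Id})$.

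First I would identify the Veech group explicitly. Write $G\subseteq\mathfrak{S}_4$ for the group of symmetries of $M_N(a_1,a_2,a_3,a_4)$ in the sense of Definition \ref{def:symmetry}. If $g\in\textrm{PSL}(2,\mathbb{Z})$ induces the permutation $\pi$, then (as established above) $g$ carries the cover to $M_N(a_{\pi(1)},\dots,a_{\pi(4)})$, and $g$ lies in $\Gamma(S)$ exactly when these two covers define the same point of the stratum. By Lemma \ref{lm:symmetry} this happens precisely when $\pi'\pi^{-1}\in\mathfrak{K}$ for some $\pi'\in G$, that is, when the class $\phi(g)=\pi\mathfrak{K}$ lies in the image $\bar G:=G\mathfrak{K}/\mathfrak{K}$ of $G$ in $\mathfrak{S}_3$. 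Hence $\Gamma(S)=\phi^{-1}(\bar G)$, and since $\phi$ is surjective one gets immediately $[\textrm{PSL}(2,\mathbb{Z}):\Gamma(S)]=[\mathfrak{S}_3:\bar G]$, which is exactly the index of the image of the symmetry group under the projection \eqref{eq:projection:S4:to:S3}. To pass from $\textrm{PSL}(2,\mathbb{Z})$ to $\textrm{SL}(2,\mathbb{Z})$ in the holomorphic case, I would invoke the fact, noted earlier, that $-\operatorname{Id}$ belongs to $\Gamma(S)$ when the flat structure comes from a $1$-form; thus $\Gamma(S)$ is the full preimage in $\textrm{SL}(2,\mathbb{Z})$ of its image in $\textrm{PSL}(2,\mathbb{Z})$, and the two indices agree.

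This deduction is short, so the only points I would take care to spell out rather than leave implicit are the naturality and containment issues. The quotient map $\mathfrak{S}_4\to\mathfrak{S}_4/\mathfrak{K}$ is canonical because $\mathfrak{K}$ is normal in $\mathfrak{S}_4$, whereas the identification with a fixed copy of $\mathfrak{S}_3$ depends on the choice of a distinguished label; since the index $[\mathfrak{S}_3:\bar G]$ is insensitive to that identification, the statement is unambiguous. I also need the inclusion $\ker\phi=\Gamma(2)/(\pm\operatorname{Id})\subseteq\Gamma(S)$, which holds because $\Gamma(2)$ fixes every marking and hence fixes the cover; it is precisely this inclusion that lets the index computation descend cleanly to $\mathfrak{S}_3$. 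I do not expect any substantial obstacle beyond keeping left and right cosets straight, which is harmless here because $\mathfrak{K}$ is normal, so the remaining verifications are routine coset bookkeeping.
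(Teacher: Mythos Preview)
Your argument is correct and follows exactly the route the paper has in mind: the paper states the result as an ``immediate Corollary'' of Lemma~\ref{lm:symmetry} obtained by passing to the quotient $\mathfrak{S}_4/\mathfrak{K}\simeq\mathfrak{S}_3$, and your proof simply spells out that passage---identifying $\Gamma(S)=\phi^{-1}(\bar G)$ via Lemma~\ref{lm:symmetry} and reading off the index from the surjectivity of $\phi$ in Lemma~\ref{l.Lemma2-5}. Your handling of the $\textrm{SL}(2,\mathbb{Z})$ versus $\textrm{PSL}(2,\mathbb{Z})$ distinction via $-\operatorname{Id}\in\Gamma(S)$ is also the point made in the paper just before Lemma~\ref{lm:symmetry}.
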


In order to complete the proof of Theorem \ref{th:SLZ:orbit} it is
sufficient to prove that all indices $1,2,3,6$ are realized. We
prefer to prove a strengthened version of Theorem \ref{th:SLZ:orbit}.

\begin{theorem}
\label{thm:1prime}
The  index  of  the  Veech  group  of  a square-tiled cyclic cover is
described by the following list.
\begin{itemize}
\item
If for some triple of pairwise distinct indices $i,j,k\in\{1,2,3,4\}$
one has  $a_i=a_j=a_k$,  the  index  of  the  Veech  group  of  the
corresponding
square-tiled cyclic cover $M_N(a_1,a_2,a_3,a_4)$ is $1$.
\item
If  there  is  no  such a triple of pairwise distinct indices,
but there is a pair of
indices  $i\neq  j$,  where $i,j\in\{1,2,3,4\}$, such that $a_i=a_j$,
then the index of the Veech group is $3$.
\end{itemize}

If all $a_i$ are pairwise distinct, then
\begin{itemize}
\item
If  $M_N(a_1,\dots,a_4)$  does not have nontrivial symmetries, or if any nontrivial
symmetry  decomposes  into two cycles of length $2$, the index of the
Veech group is $6$.
\item
If $M_N(a_1,\dots,a_4)$ has  a symmetry represented by a cycle of length $4$ or by a
single cycle of length $2$, the index of the Veech group is $3$.
\item
If $M_N(a_1,\dots,a_4)$ has  a symmetry represented by a cycle of length $3$, the
index of the Veech group is $2$.
\end{itemize}
All the symmetries listed above are realized.
\end{theorem}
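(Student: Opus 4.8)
The plan is to reduce everything to a computation inside the finite group $\mathfrak{S}_4$. By Corollary \ref{cor:symmetries} the index of the Veech group of $M_N(a_1,a_2,a_3,a_4)$ equals the index in $\mathfrak{S}_3$ of the image $\bar G$ of the symmetry group $G\subseteq\mathfrak{S}_4$ under the projection \eqref{eq:projection:S4:to:S3}, whose kernel is the Klein group $\mathfrak{K}$. Thus the entire theorem amounts to determining $|\bar G|$ (the index being $6/|\bar G|$) from the combinatorics of the tuple $(a_1,\dots,a_4)$. First I would record the elementary dictionary between conjugacy classes in $\mathfrak{S}_4$ and their images in $\mathfrak{S}_3\simeq\mathfrak{S}_4/\mathfrak{K}$: realizing the projection as the action of $\mathfrak{S}_4$ on the three partitions $\{12|34\},\{13|24\},\{14|23\}$ of $\{1,2,3,4\}$ into pairs, one checks directly that a transposition or a $4$-cycle maps to a transposition, a $3$-cycle maps to a $3$-cycle, and a double transposition (being an element of $\mathfrak{K}$) maps to the identity. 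This already explains why the two cases producing index $6$ are grouped together: a symmetry that is a product of two $2$-cycles contributes nothing to $\bar G$.

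The second ingredient is a constraint on which permutations can occur in $G$. By Definition \ref{def:symmetry} a symmetry carries a multiplier $k$, which in view of Lemma \ref{lm:dual} is a unit of $\mathbb{Z}/N\mathbb{Z}$; hence multiplication by $k$ is a bijection and must preserve the multiset $\{a_1,a_2,a_3,a_4\}$. I would use this to organize the case analysis. If three of the $a_i$ coincide, then already the multiplier $k=1$ makes the full symmetric group on those three indices a subgroup of $G$; since this copy of $\mathfrak{S}_3$ inside $\mathfrak{S}_4$ is a complement to $\mathfrak{K}$, it maps isomorphically onto $\mathfrak{S}_3$, giving $\bar G=\mathfrak{S}_3$ and index $1$. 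If exactly two of the $a_i$ coincide, say $a_1=a_2$, the multiset argument forces any admissible unit $k$ to fix the doubled value and to either fix or interchange the remaining two; running through the possibilities shows $G\subseteq\langle(12),(34)\rangle$ while $(12)\in G$ (from $k=1$). Since $(12)$ and $(34)$ have the same nontrivial image and $(12)(34)\in\mathfrak{K}$, one gets $|\bar G|=2$ and index $3$.

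The heart of the argument is the case in which all $a_i$ are distinct. Here each admissible unit $k$ determines a \emph{unique} permutation $\pi_k$, and $k\mapsto\pi_k$ is an injective homomorphism (injectivity uses $\gcd(N,a_1,\dots,a_4)=1$), so $G$ is \emph{abelian}. The final list then follows from a centralizer computation in $\mathfrak{S}_4$: an abelian subgroup containing a $3$-cycle lies in the centralizer of that $3$-cycle, which is the cyclic group it generates, so $G=\langle 3\text{-cycle}\rangle$ maps onto the alternating group $A_3$ and the index is $2$; an abelian subgroup containing a $4$-cycle lies in the cyclic centralizer of order $4$, whose image is generated by one transposition, giving index $3$; a subgroup containing a single transposition lies in the centralizer of that transposition, the four-group $\langle(12),(34)\rangle$, again with image of order $2$ and index $3$; and if $G$ is trivial or consists only of double transpositions, then $\bar G$ is trivial and the index is $6$.

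The main obstacle I anticipate is precisely this distinct-values case: one must be certain that no \emph{extra} symmetry sneaks in to enlarge $\bar G$, which is exactly what abelianness together with the centralizer bounds rules out, and these bounds also show the listed cases are mutually exclusive (for an abelian $G$ containing a $3$-cycle cannot contain a transposition, and so on). It then remains to exhibit explicit tuples $(N;a_1,\dots,a_4)$ realizing each symmetry type, in order to prove the closing assertion that all listed symmetries occur. I would produce these by choosing a unit $k$ of the appropriate order in $(\mathbb{Z}/N\mathbb{Z})^\ast$ and taking the $a_i$ to be an orbit under multiplication by $k$ together with a suitable fixed value, adjusting so that the congruences \eqref{eq:a1:a4} hold, and defer the routine verifications to Appendix \ref{a:exercise}.
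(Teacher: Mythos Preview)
Your argument is correct and takes a genuinely different route from the paper's. Both proofs start from Corollary~\ref{cor:symmetries}, but they diverge on how to pin down the index once a nontrivial symmetry is found.

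The paper argues by exclusion. In the case of exactly two equal $a_i$ it observes that the image $\bar G$ contains a transposition, so the index is $3$ or $1$, and then rules out index~$1$ by a geometric argument comparing lifts of vertical saddle connections on $M_N(a_1,a_2,a_3,a_4)$ and on $M_N(a_1,a_3,a_2,a_4)$. In the all-distinct case with a $4$-cycle or a single $2$-cycle symmetry it again gets ``index $3$ or $1$'' and defers the exclusion of index~$1$ to the arithmetic computations of Appendix~\ref{a:exercise}; the $3$-cycle case is then reduced to those. Your key observation---that when all $a_i$ are distinct the map $k\mapsto\pi_k$ is a homomorphism from a subgroup of $(\mathbb{Z}/N\mathbb{Z})^\ast$, so $G$ is \emph{abelian}---lets you compute $|\bar G|$ directly via centralizers in $\mathfrak{S}_4$, and this completely bypasses the need for Appendix~\ref{a:exercise}. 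That is a real gain in clarity. (Incidentally, abelianness alone suffices; the injectivity you mention is true but not needed.)

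Two small points. First, your treatment of the ``exactly one pair'' case tacitly assumes $a_3\neq a_4$; when $a_1=a_2$ and $a_3=a_4$ (as in the paper's example $M_6(1,1,5,5)$) a unit $k$ may swap the two doubled values, so $G$ can be the full dihedral stabilizer of the partition $\{1,2\}\mid\{3,4\}$ rather than $\langle(12),(34)\rangle$. This does not affect the conclusion, since that dihedral group still has image of order~$2$ in $\mathfrak{S}_3$, but it deserves a sentence. Second, you defer the realizability examples to Appendix~\ref{a:exercise}, but in the paper that appendix is devoted to the index-$1$ exclusion you have just made unnecessary; the explicit examples ($M_8(1,3,5,7)$, $M_{10}(1,3,9,7)$, $M_{40}(1,9,5,25)$, $M_{14}(1,9,11,7)$) appear inline in the paper's proof and you should simply exhibit them as well.
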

\begin{proof}
If for some triple of pairwise distinct indices $i,j,k\in\{1,2,3,4\}$
the numbers $a_i=a_j=a_k$ coincide, it is clear that the index of the
Veech  group  of the square-tiled cyclic cover $M_N(a_1,a_2,a_3,a_4)$ is $1$, say, as
for $M_4(1,1,1,1)$ or for $M_6(3,1,1,1)$.

Let us suppose  that  there  is  no such a triple of indices, but there is a
pair  of  indices  $i\neq  j$, where $i,j\in\{1,2,3,4\}$, such that
$a_i=a_j$,  say,  as  for $M_{10}(1, 1, 3, 5)$ or for $M_6(1,1,5,5)$.
Then  the transposition in the symmetric group $\mathfrak{S}_4$ which
interchanges  the  two  labels corresponding to $a_i$ and $a_j$ fixes
the  square-tiled  cyclic  cover $M_N(a_1,\dots,a_4)$. The image of a transposition
under   the   projection \eqref{eq:projection:S4:to:S3}  is  again  a
transposition.  Hence,  the  index  of the Veech group in this case is
either  $3$  or  $1$.

Let  us  show,  that  index $1$ is excluded. Let us suppose that the index of the
Veech    group    is    $1$,    that    is,   for   all   permutations
$\pi\in\mathfrak{S}_4$  the flat surfaces represented by square-tiled
cyclic  covers $M_N(a_{\pi(1)},a_{\pi(2)},a_{\pi(3)},a_{\pi(4)})$ are
isomorphic.  Thus,  without  loss  of  generality  we may assume that
$a_2=a_1$, and $a_3\neq a_1$, $a_4\neq a_1$. Then
$$
\operatorname{Deck}(\sigma_1\sigma^{-1}_2)=a_1-a_2=0\,.
$$
This  property  can  be formulated in a form invariant under ``pillow
symmetries'',  namely:  for  at  least one of the two vertical saddle
connections  of the ``square pillow'' $(\mathbb{P}^1(\mathbb{C}),q_0)$ the loop encircling
one  of  the  corresponding  singularities in positive direction, and
then  the  other  singularity in negative direction lifts to a closed
loop   on   $M_N(a_1,a_2,a_3,a_4)$.   Clearly   this   property  is
not  valid  for
the surface
$M_N(a_1,a_3,a_2,a_4)$ and we get a contradiction.

Consider now the remaining case when all $a_i$ are pairwise distinct.

If  any nontrivial symmetry decomposes into two cycles of length $2$,
all  the  symmetries  are  reduced  to  ``pillow  symmetries'' and by
Corollary \ref{cor:symmetries} the index of the Veech group of $M_N(a_1,\dots,a_4)$
is $6$. This situation realizes, for example, for $M_8(1,3,5,7)$.

Let us suppose  that  $M_N(a_1,\dots,a_4)$ has a symmetry represented by a cycle of length $4$.
As  an  example,  consider  $M_{10}(1,3,9,7)$  and  a symmetry
corresponding  to  the  multiplication  by  $k=3$.  The image of such
symmetry   under   projection \eqref{eq:projection:S4:to:S3}   is   a
transposition.  By  Corollary \ref{cor:symmetries}  this implies that
the  index of the Veech group of $M_N(a_1,\dots,a_4)$ is either $3$ or $1$. It is
an   exercise   to   verify   that   index $1$   is  excluded  (see
Appendix \ref{a:exercise}).

Let us suppose  now  that  $M_N(a_1,\dots,a_4)$  has  a symmetry represented by a cycle of
length $2$. As an example, consider $M_{40}(1,9,5,25)$ and a symmetry
induced  by  multiplication  by  $9$.  A  transposition  is mapped by the
projection \eqref{eq:projection:S4:to:S3}  to a transposition. Hence,
the  index  of  the Veech group in this case is again either $3$ or $1$.
It  is  an  exercise  to verify that index $1$ is excluded (see Appendix \ref{a:exercise}).

Finally, let us suppose  that  $M_N(a_1,\dots,a_4)$  has  a symmetry represented by a cycle of
length  $3$.  As  an  example,  consider  $M_{14}(1,9,11,7)$  and a
symmetry  induced by multiplication by $k=9$. A cycle of length $3$
is  mapped by the projection \eqref{eq:projection:S4:to:S3} to a cycle of
length $3$.  Hence,  the index of the Veech group is either $2$ or $1$.
If  it  were $1$,  one  of the symmetries would be an odd
permutation, \textit{i.e.},  a  single  cycle  of  length $2$ or $4$. We have
proved  that  the  presence  of  such  a symmetry excludes index $1$.
Theorem \ref{thm:1prime} and, thus, Theorem \ref{th:SLZ:orbit} are proved.
\end{proof}

To   complete   this   section   we   note   that   the   $\textrm{SL}(2,\mathbb{R})$-orbit
of any square-tiled surface
(respectively, the $\textrm{PSL}(2,\mathbb{R})$-orbit in the case when the flat structure
is  represented  by  a  quadratic  differential)
inside  the  ambient  moduli  space  of Abelian or quadratic
differentials is closed. Its projection to the moduli space of curves
is  a  Riemann surface with cusps, often called an \textit{arithmetic
Teichm\"uller   curve},  see \cite{Veech},  \cite{Gutkin:Judge}.  Any
arithmetic  Teichm\"uller  curve  is  a  finite  cover of the modular
curve.  Theorem \ref{th:SLZ:orbit}  shows  that  for  a  square-tiled
cyclic  cover,  the  corresponding  arithmetic Teichm\"uller curve is
very  small:  it is a $1$, $2$, $3$, or $6$-fold cover of the modular
curve.

%##############################################################
%##############################################################
%##############################################################
%\newpage
\section{Sum of Lyapunov exponents}
\label{s:Sum:of:Lyapunov:exponents}

%----------------------------------------------------------------
\subsection{Sum of the Lyapunov exponents for a square-tiled surface}
\label{ss:sum:general}

Let us consider  a  Teichm\"uller  curve  $\mathcal{C}$.  Each point $x$ of $\mathcal{C}$ is
represented   by  a  Riemann  surface  $S_x$.  We  can  consider  the
cohomology space of $H^1(S_x,\mathbb{R})$ as a fiber of a vector bundle $H^1$
over  $\mathcal{C}$,  called the \textit{Hodge bundle}. Similarly one defines
the bundles $H^{1,0}$ and $H^1_{\mathbb{C}}$. Note that each fiber is endowed
with  a  natural  integer  lattice $H^1(S_x,\mathbb{Z})$, which enables us to
identify  the  fibers  at nearby points $x_1, x_2$. Hence, the bundle
$H^1$   is  endowed  with  a  natural  flat  connection,  called  the
\textit{Gauss--Manin connection}.

The  Teichm\"uller  curve  $\mathcal{C}$ is endowed with a natural hyperbolic
metric  associated  to the complex structure of $\mathcal{C}$; the total area
of  $\mathcal{C}$  with respect to this metric is finite. Consider a geodesic
flow  in  this metric, and consider the monodromy of the Gauss--Manin
connection  in  $H^1$  with respect to the geodesic flow on $\mathcal{C}$. We
get a $2g$-dimensional symplectic cocycle. The geodesic flow on $\mathcal{C}$
is  ergodic  with  respect  to  the  natural finite Lebesgue measure.
Let us denote by $\lambda_1\ge\lambda_2\ge\dots\ge\lambda_{2g}$ the Lyapunov
exponents   of  the  corresponding  cocycle.  Since  the  cocycle  is
symplectic, its Lyapunov spectrum is symmetric in the sense that
$\lambda_k=-\lambda_{2g-k+1}$ for all $k=1, \dots, 2g$.

Note that $\mathcal{C}$ is isometrically immersed (usually embedded) into the
corresponding  moduli  space of curves with respect to the hyperbolic
metric  on  the Teichm\"uller curve $\mathcal{C}$ and Teichm\"uller metric in
the moduli space. The cocycle described above is a particular case of
a  more general cocycle related to the Teichm\"uller geodesic flow on
the  moduli  space  (sometimes  called the \textit{Kontsevich--Zorich
cocycle}).

The  Lyapunov exponents of this cocycle are important in the study of
the   dynamics   of  flows  on  surfaces  and  of  interval  exchange
transformations. They were studied by many authors including A. Avila
and  M. Viana \cite{Avila:Viana};  M.  Bainbridge  \cite{Bainbridge};
I. Bouw              and             M. M\"oller \cite{Bouw:Moeller};
G. Forni \cite{Forni1}--\cite{ForniSurvey};  A. Eskin,  M. Kontsevich
and    A. Zorich \cite{Eskin:Kontsevich:Zorich, Kontsevich, Zorich:how:do};            
W. Veech \cite{Veech};           see
surveys \cite{ForniSurvey} and \cite{Zorich:Houches} for an overview.
In  particular,  from  elementary geometric arguments it follows that
one always has $\lambda_1=1$.

We  need  two  results from \cite{Eskin:Kontsevich:Zorich} concerning
the  sum  $\lambda_1+\dots+\lambda_g$  of  all  nonnegative  Lyapunov
exponents  of  the  Hodge  bundle $H^1$ along the geodesic flow on an
arithmetic Teichm\"uller curve.

\begin{theorem}[\cite{Eskin:Kontsevich:Zorich}]
The  sum  of  all  nonnegative Lyapunov exponents of the Hodge bundle
$H^1$ along the geodesic flow on an arithmetic Teichm\"uller curve in
a  stratum  $\mathcal{H}(m_1,\dots,m_n)$,  where  $m_1+\dots+m_n=2g-2$,
satisfies the following relation:
\begin{multline}
\label{eq:general:sum:of:exponents:for:Abelian}
1+\lambda_2 + \dots + \lambda_g
\ = \
\cfrac{1}{12}\cdot\sum_{i=1}^n\cfrac{m_i(m_i+2)}{m_i+1}
\\+\
\cfrac{1}{\operatorname{card}(\textrm{SL}(2,\mathbb{Z})\cdot S_0)}\
\sum_{S_i\in\textrm{SL}(2,\mathbb{Z})\cdot S_0}\ \
\sum_{\substack{
\mathit{horizontal}\\
\mathit{cylinders\ cyl}_{ij}\\
such\ that\\S_i=\sqcup\mathit{cyl}_{ij}}}\
\cfrac{h_{ij}}{w_{ij}}\,.
\end{multline}
where  $S_0$ is a square-tiled surface representing the corresponding
arithmetic Teichm\"uller curve.
\end{theorem}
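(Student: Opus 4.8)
The statement is the Eskin--Kontsevich--Zorich formula specialized to an arithmetic Teichm\"uller curve, and I would follow their two-step strategy: first convert the dynamical sum $\lambda_1+\dots+\lambda_g$ into a degree of the Hodge bundle, and then split that degree into a ``bulk'' contribution coming from the orders of the zeros and a ``cusp'' contribution of Siegel--Veech type. For the first step I would invoke the Kontsevich--Forni variational formula, which expresses the sum of the nonnegative Lyapunov exponents as the average over $\mathcal{C}$ of the trace of the curvature of the Hodge norm on the bundle $H^{1,0}$. By Chern--Weil theory this curvature form represents the first Chern class of the determinant line bundle $\det H^{1,0}$, so that, after passing to the Deligne extension over the compactification $\overline{\mathcal{C}}$ and applying Gauss--Bonnet to the hyperbolic metric, one obtains
\begin{equation*}
\lambda_1+\dots+\lambda_g=\frac{2\deg\lambda}{2g(\overline{\mathcal{C}})-2+s}\,,
\end{equation*}
where $\lambda=c_1(\det H^{1,0})$ and $s$ is the number of cusps. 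This already explains why $\lambda_1=1$: the tautological subline $\mathbb{C}\omega\subset H^{1,0}$ has degree exactly $\tfrac12(2g(\overline{\mathcal{C}})-2+s)$, which is the normalized hyperbolic area.

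The core of the argument is then the computation of $\deg\lambda$. I would use the universal Noether relation $12\lambda=\kappa+\delta$ on the moduli space (in the marked version on $\overline{\mathcal{M}}_{g,n}$), restricted to the Teichm\"uller curve. The tautological part $\kappa$, evaluated against the curve by means of the section $\omega$ whose divisor has the prescribed zero orders $m_1,\dots,m_n$, produces the closed expression $\tfrac{1}{12}\sum_i\frac{m_i(m_i+2)}{m_i+1}$; here the factor $\frac{m_i(m_i+2)}{m_i+1}=(m_i+1)-\frac{1}{m_i+1}$ is precisely the local characteristic number attached to a zero of order $m_i$, and its appearance is a Grothendieck--Riemann--Roch computation on the universal curve, via the relation between $\kappa$ and the $\psi$-classes at the marked zeros.

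The remaining boundary term $\delta$ is the Siegel--Veech contribution. Near each cusp of $\mathcal{C}$ the square-tiled surface degenerates by pinching its horizontal cylinders, and I would compute the local contribution of a pinched cylinder to $\deg\lambda$ by analyzing the limit mixed Hodge structure: the Hodge norm of the cohomology class dual to a cylinder of width $w_{ij}$ and height $h_{ij}$ decays logarithmically, and the resulting parabolic degree of the Deligne extension at that cusp equals the modulus ratio $h_{ij}/w_{ij}$. Summing over all cusps, and using that (by Theorem~\ref{th:SLZ:orbit}) an arithmetic Teichm\"uller curve is the finite cover of the modular curve indexed by the $\textrm{SL}(2,\mathbb{Z})$-orbit of $S_0$, so that its cusps correspond to the horizontal cylinders of the surfaces $S_i$ in that orbit, I would obtain the normalized double sum
\begin{equation*}
\frac{1}{\operatorname{card}(\textrm{SL}(2,\mathbb{Z})\cdot S_0)}\sum_{S_i\in\textrm{SL}(2,\mathbb{Z})\cdot S_0}\ \sum_{\substack{\mathit{horizontal}\\ \mathit{cylinders}}}\frac{h_{ij}}{w_{ij}}\,,
\end{equation*}
where the averaging factor records that the number of squares is constant along the orbit. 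Equivalently, this term is $\tfrac{\pi^2}{3}$ times the area Siegel--Veech constant, and for a square-tiled surface the latter reduces to a finite cylinder count whose $\zeta(2)=\pi^2/6$ normalization cancels the factor $\tfrac{\pi^2}{3}$.

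The main obstacle is the cusp analysis of the third step: one must control precisely the asymptotics of the Hodge norm as the cylinders are pinched and identify the parabolic contribution of the Deligne extension with the ratio $h_{ij}/w_{ij}$. This interplay between the flat geometry (widths and heights of cylinders) and the Hodge-theoretic degeneration is delicate and constitutes the technical heart of the Eskin--Kontsevich--Zorich theorem; by contrast the bulk term is a comparatively routine Riemann--Roch computation once the degree reformulation of the first step is in place.
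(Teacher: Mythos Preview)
The paper does not prove this theorem at all: it is quoted from \cite{Eskin:Kontsevich:Zorich} as an input, with no proof or sketch in the present text, so there is no ``paper's own proof'' to compare your proposal against. Your outline is a fair high-level summary of the Eskin--Kontsevich--Zorich strategy (Kontsevich--Forni formula $\Rightarrow$ degree of the Hodge bundle, Noether/GRR for the orbifold bulk term, Siegel--Veech/cusp analysis for the cylinder term), and the identification of the Siegel--Veech constant with the orbit-averaged sum of moduli in the square-tiled case is indeed the correct specialization; but none of this is carried out, or needed, in the paper under review.
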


\begin{remark}
Note  that  the  sum  of  the top $g$ Lyapunov exponents of the Hodge
bundle  coincides  with  a  single  positive Lyapunov exponent of the
complex    line    bundle    $\Lambda^g    H^{1,0}$    often   called
\textit{determinant} line bundle.
\end{remark}

When   an   arithmetic  Teichm\"uller  curve  belongs  to  a  stratum
$\mathcal{Q}(d_1,\dots,d_n)$  of quadratic differentials, one can consider
the same vector bundle $H^1$ as above and define the cocycle and the
Lyapunov  exponents  exactly  in  the  same  way  as  for holomorphic
$1$-forms.  By  reasons which will become clear below, it is
convenient to denote these Lyapunov exponents by
$\lambda^+_1\ge\dots\ge\lambda^+_{2g}$.    Since   the   cocycle   is
symplectic,        we       again       have       the       symmetry
$\lambda^+_k=-\lambda^+_{2g-k+1}$ for all $k=1,\dots,2g$.
However,     for quadratic differentials  one  has $\lambda^+_1<1$ for
any invariant suborbifold (in fact, for any invariant probability measure, see \cite{Forni}).

Following  M. Kontsevich \cite{Kontsevich},  in the case of quadratic
differentials  one  can  define one more vector bundle, $H^1_-$, over
$\mathcal{Q}(d_1,\dots,d_n)$.   Consider  a  pair  (Riemann  surface  $S$,
quadratic  differential  $q$)  representing  a  point of a stratum of
quadratic  differentials  $\mathcal{Q}(d_1,\dots,d_n)$. By assumption, $q$
is  not  a  global  square  of  a  $1$-form.  There  exists a canonical
(possibly   ramified)  double  cover  $p_2:\hat  S\to  S$  such  that
$p_2^\ast  q=\omega^2$,  where  $\omega$ is a holomorphic $1$-form on
$\hat  S$.  Following \cite{Kontsevich}  it  will be convenient to
introduce  the  following  notation. Let $\hat g$ be the genus of the
cover  $\hat  S$.  By  \textit{effective  genus} we call the positive
integer
\begin{equation}
\label{eq:g:eff}
g_{\mathit{eff}}:=\hat g - g\ .
\end{equation}

The  cohomology  space  $H^1(\hat  S,\mathbb{R})$  splits  into  a direct sum
$H^1(\hat S,\mathbb{R})=H_+^1(\hat S,\mathbb{R})\oplus H_-^1(\hat S,\mathbb{R})$ of invariant
and  anti-invariant  subspaces  with  respect  to the action
$H^1(\hat  S,\mathbb{R})\to H^1(\hat S,\mathbb{R})$ induced on cohomology  by  the canonical
involution which commutes  with  the double covering map $p_2: \hat S\to S$.
Note that the invariant part is canonically isomorphic to the cohomology of the
underlying surface, that is, $H_+^1(\hat S,\mathbb{R})\simeq H^1(S,\mathbb{R})$.

We  consider  the subspaces  $H_+^1(\hat S,\mathbb{R})$ and $H_-^1(\hat S,\mathbb{R})$ as
fibers   of   natural   vector   bundles  $H^1_+$  and  $H^1_-$  over
$\mathcal{Q}(d_1,\dots,d_n)$. The bundle $H^1_+$ is canonically isomorphic
to  the  bundle  $H^1$.  The  splitting  $H^1=H^1_+\oplus  H^1_-$  is
equivariant   with   respect  to  the  Gauss--Manin  connection.  The
symplectic  form  restricted  to each summand is nondegenerate. Thus,
the  monodromy  of  the  Gauss--Manin  connection  on  $H_+^1$ and on
$H_-^1$  along  Teichm\"uller  geodesic  flow  defines two symplectic
cocycles.  Following  the  notations established above, we denote the
Lyapunov   exponents   of   the   cocycle   acting   on   $H_+^1$  by
$\lambda^+_1\ge\dots\ge\lambda^+_{2g}$  and  the  ones of the cocycle
acting on $H_-^1$ by $\lambda^-_1\ge\dots\ge\lambda^-_{2g_{\mathit{eff}}}$.
As always for symplectic  cocycles  we  have  the symmetries $\lambda^+_k=-\lambda^+_{2g-k+1}$
and $\lambda^-_k=-\lambda^-_{2g_{\mathit{eff}}-k+1}$.
It  follows  from  the  analogous  result  for  the  case  of Abelian
differentials that
one always has $\lambda^-_1=1$.

Unlike  $H^1_+$,  the vector bundle $H^1_-$ on a stratum of quadratic
differentials  is  not  induced from a vector bundle on an underlying
moduli   space   of   curves.   However,   it  can  be  descended  to
$\mathcal{Q}(d_1,\dots,d_n)/\mathbb{C}^\ast$,  where  $\mathbb{C}^\ast$ is identified with
the   subgroup   of   $\textrm{GL}(2,\mathbb{R})$   acting  on  $\mathcal{Q}(d_1,\dots,d_n)$  by
multiplying  a  quadratic  differential  by  a  nonzero  constant. In
particular,  for any Veech surface $(S,q)$ in $\mathcal{Q}(d_1,\dots,d_n)$
the  bundle  $H^1_-$  can be descended from the $\textrm{PSL}(2,\mathbb{R})$-orbit $\mathcal{O}$ of
$(S,q)$ in $\mathcal{Q}(d_1,\dots,d_n)$ to the corresponding Teichm\"uller
curve $\mathcal{C}=\mathcal{O}/\mathbb{C}^\ast$.

\begin{theorem}[\cite{Eskin:Kontsevich:Zorich}]
The sums of all nonnegative Lyapunov exponents of the bundles $H_+^1$
and  $H^1_-$  along  the geodesic flow on an arithmetic Teichm\"uller
curve  in a stratum $\mathcal{Q}(d_1,\dots,d_n)$, of meromorphic quadratic
differentials  with  at  most  simple  poles,  satisfy  the following
relations:
\begin{multline}
\label{eq:general:plus:sum:of:exponents:for:quadratic}
\lambda^+_1+\lambda^+_2 + \dots + \lambda^+_g
\ = \
\cfrac{1}{24}\cdot\sum_{i=1}^n\cfrac{d_i(d_i+4)}{d_i+2}
\\+\
\cfrac{1}{\textrm{card}(\textrm{PSL}(2,\mathbb{Z})\cdot S_0)}\
\sum_{S_i\in\textrm{PSL}(2,\mathbb{Z})\cdot S_0}\ \
\sum_{\substack{
\mathit{horizontal}\\
\mathit{cylinders\ cyl}_{ij}\\
such\ that\\S_i=\sqcup\mathit{cyl}_{ij}}}\
\cfrac{h_{ij}}{w_{ij}}\,.
\end{multline}
for the Lyapunov exponents of the bundle $H^1=H^1_+$ and

\begin{multline}
\label{eq:general:minus:sum:of:exponents:for:quadratic}
1+\lambda^-_2 + \dots + \lambda^-_{g_{\mathit{eff}}}
\ = \
\cfrac{1}{24}\cdot\sum_{i=1}^n\cfrac{d_i(d_i+4)}{d_i+2}
\ +\
\cfrac{1}{4}\,\cdot\,\sum_{\substack{j \text{ such that}\\
d_j \text{ is odd}}}
\cfrac{1}{d_j+2}
\\+\
\cfrac{1}{\textrm{card}(\textrm{PSL}(2,\mathbb{Z})\cdot S_0)}\
\sum_{S_i\in\textrm{PSL}(2,\mathbb{Z})\cdot S_0}\ \
\sum_{\substack{
\mathit{horizontal}\\
\mathit{cylinders\ cyl}_{ij}\\
such\ that\\S_i=\sqcup\mathit{cyl}_{ij}}}\
\cfrac{h_{ij}}{w_{ij}}\,.
\end{multline}
for  the  Lyapunov  exponents  of the bundle $H^1_-$. Here $S_0$ is a
square-tiled   surface   representing  the  corresponding  arithmetic
Teichm\"uller curve, and $g$ and $g_{\mathit{eff}}$ are the genus and
the effective genus \eqref{eq:g:eff} of $S_0$.
\end{theorem}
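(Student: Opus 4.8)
The plan is to deduce both formulas from the Abelian sum formula stated above, applied on the canonical orienting double cover $p_2\colon\hat S\to S$, together with the equivariant splitting $H^1(\hat S,\mathbb{R})=H^1_+\oplus H^1_-$. Because the Teichm\"uller flow on the quadratic stratum lifts to the flow on the Abelian stratum of $(\hat S,\omega)$, and the splitting is flat for the Gauss--Manin connection, the Oseledets exponents of $H^1(\hat S)$ are the disjoint union of those of $H^1_+$ and $H^1_-$. In particular the nonnegative exponents of $H^1(\hat S)$ are the $g$ nonnegative exponents of $H^1_+$ together with the $g_{\mathit{eff}}$ nonnegative exponents of $H^1_-$, so that
$$\big(\lambda^+_1+\dots+\lambda^+_g\big)+\big(1+\lambda^-_2+\dots+\lambda^-_{g_{\mathit{eff}}}\big)=\lambda_1^{\hat S}+\dots+\lambda_{\hat g}^{\hat S},$$
where the right-hand side is evaluated by the Abelian formula on the stratum of $(\hat S,\omega)$.

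The first concrete step is to track the singularities under $p_2$. Since $p_2^\ast q=\omega^2$ and the cover is ramified exactly over the odd-order singularities of $q$, a singularity of even order $d_i$ lifts to two zeros of $\omega$ of order $d_i/2$, while a singularity of odd order $d_i$ lifts to a single zero of order $d_i+1$. Feeding these orders into the Abelian interior term $\tfrac{1}{12}\sum\tfrac{\hat m(\hat m+2)}{\hat m+1}$ and using the elementary identities
$$2\cdot\frac{(d/2)(d/2+2)}{d/2+1}=\frac{d(d+4)}{d+2},\qquad \frac{(d+1)(d+3)}{d+2}=\frac{d(d+4)}{d+2}+\frac{3}{d+2},$$
I would find that the interior contribution on $\hat S$ equals $\tfrac{1}{12}\sum_i\tfrac{d_i(d_i+4)}{d_i+2}+\tfrac14\sum_{d_j\text{ odd}}\tfrac{1}{d_j+2}$. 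Subtracting the interior term $\tfrac{1}{24}\sum_i\tfrac{d_i(d_i+4)}{d_i+2}$ of the $H^1_+$ formula then leaves exactly the interior term of the $H^1_-$ formula, the ramification over odd-order singularities accounting precisely for the extra correction $\tfrac14\sum_{d_j\text{ odd}}\tfrac{1}{d_j+2}$.

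For the $H^1_+$ formula itself, which I would establish separately, the idea is that $H^1_+\cong H^1(S)$ so the problem is intrinsic to the half-translation surface $(S,q)$: by the Kontsevich--Forni identity the sum of its nonnegative exponents is the normalized parabolic degree of the Hodge line bundle over the Teichm\"uller curve, and the local contribution of a cone point of angle $(d+2)\pi$ now produces the weight $\tfrac{1}{24}\tfrac{d(d+4)}{d+2}$ in place of the Abelian $\tfrac{1}{12}\tfrac{m(m+2)}{m+1}$. In both the $+$ and the $-$ formulas the remaining term is a Siegel--Veech contribution which, for an arithmetic Teichm\"uller curve, takes the explicit form $\tfrac{1}{\operatorname{card}}\sum_{S_i}\sum_{\mathrm{cyl}}h_{ij}/w_{ij}$; the point is that this cusp contribution is intrinsic to the flat surface $S$ and enters both formulas identically.

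The hard part will be the cusp analysis underlying that last claim. On the analytic side one must identify the sum of exponents with the degree of the \emph{parabolic} (Deligne-extended) Hodge bundle, which requires controlling the logarithmic growth of the Hodge norm as the surface pinches along a cusp and regularizing the resulting divergent integral; on the geometric side one must show that each cusp contributes the stated weight $h/w$ and that this weight is common to $H^1_+$ and $H^1_-$. Making the double-cover reduction consistent with this also demands a lifting lemma comparing the Siegel--Veech constants of $\hat S$ and $S$, with the correct normalization distinguishing orientable from non-orientable cylinders. By contrast, the interior terms follow cleanly from the ramification computation above, so I expect the boundary/Siegel--Veech bookkeeping, rather than the local computation, to be the main obstacle.
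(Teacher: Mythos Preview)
This theorem is not proved in the present paper at all: it is quoted verbatim from \cite{Eskin:Kontsevich:Zorich} and used as a black box (see the sentence ``We need two results from \cite{Eskin:Kontsevich:Zorich} \dots'' immediately preceding the two theorems in Section~\ref{ss:sum:general}). So there is nothing to compare your proposal against here; the paper simply cites the result and then applies it to square-tiled cyclic covers in Section~\ref{ss:Sum:of:exponents}.

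That said, your sketch is a faithful outline of how the proof in \cite{Eskin:Kontsevich:Zorich} actually goes: one establishes the formula for $H^1_+$ directly (identifying the sum with the normalized parabolic degree of the Hodge bundle over the Teichm\"uller curve, the local term $\tfrac{1}{24}\tfrac{d(d+4)}{d+2}$ coming from the Chern--Weil/Noether computation and the cylinder term from the cusp contributions), and then obtains the $H^1_-$ formula by passing to the orienting double cover $\hat S$, applying the Abelian formula there, and subtracting. Your ramification bookkeeping and the two identities are correct and indeed produce the extra $\tfrac14\sum_{d_j\text{ odd}}\tfrac{1}{d_j+2}$. You are also right that the genuinely delicate step is the cusp/Siegel--Veech analysis: one must check that each horizontal cylinder of $S$ lifts to two isometric cylinders of $\hat S$ (the holonomy along a waist curve is trivial), and match the $\textrm{SL}(2,\mathbb{Z})$-orbit of $\hat S_0$ with the $\textrm{PSL}(2,\mathbb{Z})$-orbit of $S_0$, so that the combinatorial cylinder term on $\hat S$ is exactly twice that on $S$ --- which is what makes the \emph{same} cylinder sum appear in both \eqref{eq:general:plus:sum:of:exponents:for:quadratic} and \eqref{eq:general:minus:sum:of:exponents:for:quadratic}.
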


%-----------------------------------------------------------------------
\subsection{Sum  of  the Lyapunov exponents for a square-tiled cyclic
cover}
\label{ss:Sum:of:exponents}

Now  everything is ready to apply the results of the previous section
to square-tiled cyclic covers.

\begin{theorem}
\label{th:sum:of:exponents:for:cyclic:Abelian}
Let us consider  an  even  integer  $N$  and  a  collection  of odd integers
$a_1,a_2,a_3,a_4$ satisfying the relations in formula \eqref{eq:a1:a4}.

The  sum  of  all  nonnegative Lyapunov exponents of the Hodge bundle
$H^1$  along  the geodesic flow on the arithmetic Teichm\"uller curve
of the square-tiled cyclic cover $M_N(a_1,a_2,a_3,a_4)$ is expressed by the formula below:
\begin{multline}
\label{eq:sum:of:exponents:for:cyclic:Abelian}
1+\lambda_2 + \dots + \lambda_g
\ = \
\frac{N}{6}-
\frac{1}{6 N}\,\sum_{i=1}^4 {\gcd}^2(N,a_i)
\\+\
\cfrac{1}{6N}
\bigg({\gcd}^2(N,a_1+a_2)+{\gcd}^2(N,a_1+a_3)+{\gcd}^2(N,a_1+a_4)\bigg)
\end{multline}
\end{theorem}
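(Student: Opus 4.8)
The plan is to specialize the Eskin--Kontsevich--Zorich formula \eqref{eq:general:sum:of:exponents:for:Abelian} to the present case and evaluate its two pieces separately: the local contribution determined by the ambient stratum, and the averaged cylinder contribution over the $\textrm{SL}(2,\mathbb{Z})$-orbit. Throughout I write $d_i=\gcd(N,a_i)$.

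First I would compute the stratum term. By Lemma \ref{lm:singularity:pattern}, since $N$ is even and all $a_i$ are odd, the form $\omega$ lies in the stratum \eqref{eq:singularities:one:form}, so its zeroes have order $m=\tfrac{N}{2d_i}-1$ with multiplicity $d_i$, for $i=1,2,3,4$. For such a zero one has $m+1=\tfrac{N}{2d_i}$ and $m(m+2)=(m+1)^2-1$, hence
\begin{equation*}
\frac{m(m+2)}{m+1}=\frac{N}{2d_i}-\frac{2d_i}{N}.
\end{equation*}
Multiplying by the multiplicity $d_i$, summing over $i=1,\dots,4$, and multiplying by $\tfrac{1}{12}$ produces exactly $\tfrac{N}{6}-\tfrac{1}{6N}\sum_{i=1}^4\gcd^2(N,a_i)$, which is the first line of \eqref{eq:sum:of:exponents:for:cyclic:Abelian}.

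Next I would handle the cylinder term for a single surface. By Lemma \ref{lm:cylinder:width}, the horizontal cylinder decomposition of $M_N(a_1,a_2,a_3,a_4)$ has all widths equal to $\tfrac{2N}{\gcd(N,a_1+a_4)}$ and total height $\gcd(N,a_1+a_4)$, so its contribution is
\begin{equation*}
\sum_i\frac{h_i}{w_i}=\frac{1}{w}\sum_i h_i=\frac{\gcd^2(N,a_1+a_4)}{2N}.
\end{equation*}
The three distinguished directions on the square pillow correspond to the three ways of partitioning the four corners into pairs, namely $\{1,4\}\,|\,\{2,3\}$, $\{1,2\}\,|\,\{3,4\}$ and $\{1,3\}\,|\,\{2,4\}$; using $a_1+a_2+a_3+a_4\equiv 0\pmod N$ these give the three quantities $\gcd(N,a_1+a_4)$, $\gcd(N,a_1+a_2)$ and $\gcd(N,a_1+a_3)$ appearing in \eqref{eq:sum:of:exponents:for:cyclic:Abelian}.

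Finally I would carry out the averaging over the orbit, which is the crux. By Lemma \ref{l.Lemma2-5} the $\textrm{PSL}(2,\mathbb{Z})$-action factors through $\mathfrak{S}_3\simeq\textrm{PSL}(2,\mathbb{Z}/2\mathbb{Z})$, which permutes the three pairings above transitively, with point-stabilizers of order $2$; whichever pairing is carried to the \emph{horizontal} direction determines the cylinder contribution of that surface by the displayed computation. Writing $F(g)=\sum_{\mathrm{cyl}}h/w$ for the surface $gS_0$, the function $F$ is constant on the left cosets of the Veech group $\Gamma(S_0)$, which index the orbit by Theorem \ref{th:SLZ:orbit}; summing over all of $\mathfrak{S}_3$ hits each pairing exactly twice, so $\sum_{g\in\mathfrak{S}_3}F(g)=\tfrac{1}{N}\big(\gcd^2(N,a_1+a_2)+\gcd^2(N,a_1+a_3)+\gcd^2(N,a_1+a_4)\big)$. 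A coset count then shows the orbit average equals $\tfrac16\sum_{g\in\mathfrak{S}_3}F(g)$ independently of the index: if $\Gamma(S_0)$ has index $d$, the orbit has $d$ elements and each left coset contains $6/d$ elements of $\mathfrak{S}_3$ on which $F$ is constant, so $\sum_{\mathrm{orbit}}F=\tfrac{d}{6}\sum_{g}F(g)$ and dividing by $d$ cancels the dependence on $d$. This yields precisely the second line of \eqref{eq:sum:of:exponents:for:cyclic:Abelian}. \textbf{The main obstacle} is exactly this index-independence: one must invoke the rigidity from Theorem \ref{th:SLZ:orbit} (the Veech group always contains $\Gamma(2)$, with quotient $\mathfrak{S}_3$) to see that the three pairings enter the average with uniform weight no matter which of the indices $1,2,3,6$ occurs.
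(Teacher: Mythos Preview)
Your argument is correct and follows the same route as the paper: specialize formula \eqref{eq:general:sum:of:exponents:for:Abelian} using Lemma \ref{lm:singularity:pattern} for the stratum term and Lemma \ref{lm:cylinder:width} together with the $\mathfrak{S}_3$-factorization of the orbit for the cylinder term. The paper's proof is a one-line reference to these ingredients; you have simply carried out the computations explicitly, including the coset-counting argument that makes the orbit average independent of the index of the Veech group.
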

\begin{proof}
We apply  formula \eqref{eq:general:sum:of:exponents:for:Abelian}
taking into account the following data. The singularity pattern
$(m_1,\dots,m_n)$  of  the  holomorphic $1$-form corresponding to
the square-tiled cyclic cover
$M_N(a_1,a_2,a_3,a_4)$  is computed in Lemma
\ref{lm:singularity:pattern}, see formula
\eqref{eq:singularities:one:form}.   The
$\textrm{SL}(2,\mathbb{Z})$-orbit of the square-tiled surface
$M_N(a_1,a_2,a_3,a_4)$ is described by Theorem \ref{thm:1prime} and
the cylinder  decomposition for each square-tiled surface in the
orbit is given in Lemma \ref{lm:cylinder:width}.    By plugging the
above data in formula \eqref{eq:general:sum:of:exponents:for:Abelian}
we obtain formula \eqref{eq:sum:of:exponents:for:cyclic:Abelian}.
\end{proof}

\begin{theorem}
\label{th:sum:of:exponents:for:cyclic:quadratic}
Let us consider integers $N$ and $a_1,\dots,a_4$ satisfying
the relations in formula \eqref{eq:a1:a4}.  Let us suppose, in addition, that either $N$ is
odd,  or  $N$ is even and at least one of $a_i$, $i=1,2,3,4$, is also
even.

The  sum  of all nonnegative Lyapunov exponents of the bundle $H_+^1$
along  the geodesic flow on the arithmetic Teichm\"uller curve of the
square-tiled cyclic cover $M_N(a_1,a_2,a_3,a_4)$ is expressed by the following formula:
\begin{multline}
\label{eq:sum:of:plus:exponents:for:cyclic:quadratic}
\lambda^+_1+\lambda^+_2 + \dots + \lambda^+_g
\ = \
\frac{N}{6}-
\frac{1}{6 N}\,\sum_{i=1}^4 {\gcd}^2(N,a_i)
\\+\
\cfrac{1}{6N}
\big({\gcd}^2(N,a_1+a_2)+{\gcd}^2(N,a_1+a_3)+{\gcd}^2(N,a_1+a_4)\big)
\end{multline}

The  sum  of all nonnegative Lyapunov exponents of the bundle $H_-^1$
along  the geodesic flow on the arithmetic Teichm\"uller curve of the
square-tiled cyclic cover $M_N(a_1,a_2,a_3,a_4)$ is expressed by the following formula:
\begin{multline}
\label{eq:sum:of:minus:exponents:for:cyclic:quadratic}
1+\lambda^-_2 + \dots + \lambda^-_{g_{\mathit{eff}}}
\ = \frac{N}{6}  \\
\ +\
\frac{1}{12 N}\,
\sum_{\substack{
i\text{ such that }\\
\vspace*{-.5\baselineskip}\\
\frac{N}{\gcd(N,a_i)}\text{ is odd}
}}
{\gcd}^2(N,a_i)
\ -\
\frac{1}{6 N}\,
\sum_{\substack{
i\text{ such that }\\
\vspace*{-.5\baselineskip}\\
\frac{N}{\gcd(N,a_i)}\text{ is even}
}}
{\gcd}^2(N,a_i)
\\+\
\cfrac{1}{6N}
\big({\gcd}^2(N,a_1+a_2)+{\gcd}^2(N,a_1+a_3)+{\gcd}^2(N,a_1+a_4)\big)
\end{multline}
\end{theorem}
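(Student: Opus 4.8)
The plan is to run the argument of the proof of Theorem~\ref{th:sum:of:exponents:for:cyclic:Abelian} essentially verbatim, now feeding the quadratic-differential formulas \eqref{eq:general:plus:sum:of:exponents:for:quadratic} and \eqref{eq:general:minus:sum:of:exponents:for:quadratic} of \cite{Eskin:Kontsevich:Zorich} with the data supplied by our earlier lemmas. By Lemma~\ref{lm:singularity:pattern} the singularity pattern of $q=p^\ast q_0$ is the one in \eqref{eq:singularities:quadratic:differential}: over each branch point $z_i$ there sit $\gcd(N,a_i)$ singularities, each of degree $d_i=\tfrac{N}{\gcd(N,a_i)}-2$. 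Writing $g_i:=\gcd(N,a_i)$ and $\alpha_i:=N/g_i$, so that $d_i+2=\alpha_i$, $g_i\alpha_i=N$ and $g_i/\alpha_i=g_i^2/N$, the local (``stratum'') term common to both formulas becomes
\[
\frac{1}{24}\sum_{i=1}^{4} g_i\,\frac{(\alpha_i-2)(\alpha_i+2)}{\alpha_i}
=\frac{1}{24}\sum_{i=1}^{4}\Bigl(g_i\alpha_i-\frac{4g_i}{\alpha_i}\Bigr)
=\frac{N}{6}-\frac{1}{6N}\sum_{i=1}^{4} g_i^{2},
\]
which is exactly the elementary identity underlying the Abelian case.

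First I would treat the bundle $H^1_+$, whose formula \eqref{eq:general:plus:sum:of:exponents:for:quadratic} has the same structure as the Abelian formula \eqref{eq:general:sum:of:exponents:for:Abelian}. The cylinder-sum term depends only on the square-tiled combinatorics, which is insensitive to whether $q$ is a global square: by Lemma~\ref{lm:cylinder:width} every horizontal cylinder of $M_N(a_1,a_2,a_3,a_4)$ has width $2N/\gcd(N,a_1+a_4)$ while the heights sum to $\gcd(N,a_1+a_4)$, so a single surface contributes $\gcd^2(N,a_1+a_4)/(2N)$. Averaging over the $\textrm{PSL}(2,\mathbb{Z})$-orbit described by Theorem~\ref{thm:1prime} reproduces the fully symmetric combination $\frac{1}{6N}\bigl(\gcd^2(N,a_1+a_2)+\gcd^2(N,a_1+a_3)+\gcd^2(N,a_1+a_4)\bigr)$, exactly as in the proof of Theorem~\ref{th:sum:of:exponents:for:cyclic:Abelian}. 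Adding this to the local term yields \eqref{eq:sum:of:plus:exponents:for:cyclic:quadratic}.

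For $H^1_-$ I would use \eqref{eq:general:minus:sum:of:exponents:for:quadratic}: its local and cylinder terms are precisely the ones just computed, and the only new ingredient is the correction $\frac{1}{4}\sum_{j:\,d_j\text{ odd}}\frac{1}{d_j+2}$. Since $d_i+2=\alpha_i=N/\gcd(N,a_i)$, the condition that $d_i$ be odd is exactly the condition that $N/\gcd(N,a_i)$ be odd, and each such branch point carries $g_i$ singularities; hence this correction equals $\frac{1}{4}\sum_{\alpha_i\text{ odd}} g_i/\alpha_i=\frac{1}{4N}\sum_{\alpha_i\text{ odd}} g_i^{2}$. I would then split the sum $\sum_i g_i^2$ in the local term according to the parity of $\alpha_i$ and combine coefficients on the odd part via $-\frac{1}{6N}+\frac{1}{4N}=\frac{1}{12N}$; this regroups everything except the cylinder term into $\frac{N}{6}+\frac{1}{12N}\sum_{\alpha_i\text{ odd}} g_i^{2}-\frac{1}{6N}\sum_{\alpha_i\text{ even}} g_i^{2}$, which together with the unchanged cylinder contribution is exactly \eqref{eq:sum:of:minus:exponents:for:cyclic:quadratic}.

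Essentially every step is a direct substitution into the formulas of \cite{Eskin:Kontsevich:Zorich} followed by the identities $g_i\alpha_i=N$ and $g_i/\alpha_i=g_i^2/N$. The one point requiring genuine care is the orbit-average of the cylinder term: one must verify that, over a $\textrm{PSL}(2,\mathbb{Z})$-orbit whose cardinality can be $1$, $2$, $3$, or $6$ by Theorem~\ref{thm:1prime}, the normalized sum collapses to $\frac{1}{3}$ of the symmetric combination of the three corner-pairings independently of the index. In the index-$6$ case this holds because the orbit meets the three two-pair partitions with equal multiplicity; for the smaller indices the very symmetry that lowers the index multiplies a pairing sum $a_i+a_j$ by a unit modulo $N$ and so forces the corresponding values $\gcd(N,a_i+a_j)$ to coincide, whence the average is again $\frac{1}{3}$ of the symmetric sum. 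This is precisely the bookkeeping already carried out in the Abelian case, and since the underlying square-tiled surface, its cylinder decomposition, and the $\mathfrak{S}_3$-action on pairings do not depend on whether $q$ is a global square, it transfers verbatim; I would invoke that computation rather than repeat it.
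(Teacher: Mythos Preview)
Your proposal is correct and follows exactly the paper's approach: plug the singularity data from Lemma~\ref{lm:singularity:pattern}, the cylinder data from Lemma~\ref{lm:cylinder:width}, and the orbit description from Theorem~\ref{th:SLZ:orbit}/\ref{thm:1prime} into the general formulas \eqref{eq:general:plus:sum:of:exponents:for:quadratic} and \eqref{eq:general:minus:sum:of:exponents:for:quadratic}. You have simply written out the elementary substitutions and the parity bookkeeping for the $H^1_-$ correction term more explicitly than the paper does.
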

\begin{proof}
We apply formulae \eqref{eq:general:plus:sum:of:exponents:for:quadratic} and \eqref{eq:general:minus:sum:of:exponents:for:quadratic} taking into account the following data.
The singularity pattern  $(d_1,\dots,d_n)$ of the quadratic differential corresponding to
the square-tiled cyclic cover $M_N(a_1,a_2,a_3,a_4)$ is computed in Lemma \ref{lm:singularity:pattern},
see formula \eqref{eq:singularities:quadratic:differential}.
The $\textrm{PSL}(2,\mathbb{Z})$-orbit  of the square-tiled cyclic surface $M_N(a_1,a_2,a_3,a_4)$  is  described  by  Theorem \ref{th:SLZ:orbit}  and a cylinder decomposition for each square-tiled surface in the orbit is given
in Lemma \ref{lm:cylinder:width}. By plugging the above data in \eqref{eq:general:plus:sum:of:exponents:for:quadratic} and \eqref{eq:general:minus:sum:of:exponents:for:quadratic} we
obtain \eqref{eq:sum:of:plus:exponents:for:cyclic:quadratic} and  \eqref{eq:sum:of:minus:exponents:for:cyclic:quadratic} respectively.
\end{proof}

\begin{remark}
\hspace*{-1.47pt}
Actually, the Hodge bundles $H^{1,0}$ and $H^1_{\mathbb{C}}$ over the
Teichm\"uller curve of a square-tiled cyclic cover have a very
explicit decomposition into a direct sum of one- and two-dimensional
vector subbundles, see \cite{Bouw}. A similar decomposition, used
also in \cite{Bouw:Moeller}, \cite{McMullen} and \cite{CMFZ-survey},
enables, in particular, to compute explicitly all \textit{individual}
Lyapunov exponents for any square-tiled cyclic cover,
see \cite{Eskin:Kontsevich:Zorich:cyclic}.
\end{remark}

%##############################################################
%##############################################################
%##############################################################
\subsection{Degenerate Lyapunov spectrum}
\label{ss:degenerate:spectrum}
In this section we list of all examples of arithmetic Teichm\"uller
curves coming from square-tiled cyclic covers with  \emph{maximally
degenerate} Lyapunov spectra. We recall  that  by  elementary
geometric reasons in strata of Abelian differentials $\lambda_1$ is
equal to one, while in strata of quadratic differentials (which are
not squares) $\lambda_1^-$  is  equal  to  one,  for any ergodic
invariant measure. Thus, for strata of Abelian differentials we speak
of     ``maximally degenerate spectrum'' whenever
$\lambda_2=\dots=\lambda_g=0$, while for strata of quadratic
differentials (which are not squares) we speak of     ``maximally
degenerate spectrum'' of $\lambda^-$-exponents,   whenever
$\lambda^-_2=\dots=\lambda^-_{g_{\mathit{eff}}}=0$ and ``maximally
degenerate  spectrum'' of $\lambda^+$-exponents, whenever
$\lambda^+_1=\dots=\lambda^+_g=0$.

\subsubsection{Abelian Differentials}

We start with $M_N(a_1,a_2,a_3,a_4)$ square-tiled cyclic covers which
give rise to holomorphic $1$-forms. By Lemma \ref{lm:singularity:pattern}  this
corresponds to even $N$ and odd $a_i$, $i=1,2,3,4$.

\begin{theorem}
\label{th:degenerate:spectrum:Abelian}
The  cyclic  cover $M_2(1,1,1,1)$ has genus one, so there is a single
nonnegative  Lyapunov  exponent $\lambda_1$ of the Hodge bundle $H^1$
along the geodesic flow on the arithmetic Teichm\"uller curve of this
cyclic cover; as always $\lambda_1=1$.

For   the   arithmetic  Teichm\"uller  curves  corresponding  to  the
square-tiled cyclic covers
$$
M_4(1,1,1,1)\simeq M_4(3,3,3,3)
\quad\text{ and }\quad
M_6(1,1,1,3)\simeq M_6(5,5,5,3)
$$
the Lyapunov spectrum is maximally degenerate,
that is $\lambda_2=\dots=\lambda_g=0$.

For  all other cyclic covers of the form $M_N(a_1,a_2,a_3,a_4)$ with even $N$ and odd
$a_i$, $i=1,2,3,4$, one has $\lambda_2>0$.
\end{theorem}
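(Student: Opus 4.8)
The plan is to read maximal degeneracy directly off the sum formula \eqref{eq:sum:of:exponents:for:cyclic:Abelian}. Since $\lambda_1=1$ and every $\lambda_i\ge 0$, the quantity $1+\lambda_2+\dots+\lambda_g$ is always at least $1$, and the spectrum is maximally degenerate precisely when it equals $1$. Writing $g_i:=\gcd(N,a_i)$ and $b_{ij}:=\gcd(N,a_i+a_j)$ and clearing the denominator $6N$ in \eqref{eq:sum:of:exponents:for:cyclic:Abelian}, the condition $\lambda_2=\dots=\lambda_g=0$ becomes the single Diophantine identity
\begin{equation}\label{eq:degen:diophantine}
N^2-\sum_{i=1}^4 g_i^2+\bigl(b_{12}^2+b_{13}^2+b_{14}^2\bigr)=6N .
\end{equation}
First I would note that the three terms $b_{12}^2+b_{13}^2+b_{14}^2$ index the three partitions of $\{1,2,3,4\}$ into pairs — using $a_1+a_2\equiv-(a_3+a_4)\ (\mathrm{mod}\ N)$, so $b_{12}=\gcd(N,a_3+a_4)$, and so on — and therefore form a \emph{symmetric} function of $(a_1,\dots,a_4)$; this lets me relabel the $a_i$ freely. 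A direct substitution then verifies \eqref{eq:degen:diophantine} for $M_4(1,1,1,1)$ (all $g_i=1$, all $b_{ij}=2$, giving $16-4+12=24=6\cdot4$) and for $M_6(1,1,1,3)$ ($\sum g_i^2=12$, $\sum b_{ij}^2=12$, giving $36-12+12=36=6\cdot6$); the duals follow from \eqref{eq:dual}, and $M_2(1,1,1,1)$ has genus $1$ by \eqref{eq:genus}, so it carries only the single exponent $\lambda_1$.

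The remaining and main task is to show \eqref{eq:degen:diophantine} has no other solution, which I would do by splitting on the $2$-adic valuation of $N$. Suppose first $4\mid N$. Since each $a_i$ is odd, $g_i$ is an odd divisor of $N$, hence divides the odd part of $N$ and satisfies $g_i\le N/4$; thus $\sum g_i^2\le N^2/4$. On the other hand each $a_i+a_j$ is even and $N$ is even, so $b_{ij}\ge2$ and $b_{12}^2+b_{13}^2+b_{14}^2\ge 12$. Feeding both bounds into \eqref{eq:degen:diophantine} gives $N^2/4\ge N^2-6N+12$, i.e.\ $3(N-4)^2\le 0$, forcing $N=4$; a short enumeration of odd $(a_1,\dots,a_4)$ with $\sum a_i\equiv0\ (\mathrm{mod}\ 4)$ and all $b_{ij}=2$ then leaves only $(1,1,1,1)\simeq(3,3,3,3)$.

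Now suppose $N=2m$ with $m$ odd. Because the $a_i$ are odd, $g_i=\gcd(m,a_i)$, while $\gcd(2m,a_i+a_j)=2\gcd(m,a_i+a_j)$ since $2$ is a unit modulo $m$; writing $\beta_{ij}:=\gcd(m,a_i+a_j)$, identity \eqref{eq:degen:diophantine} becomes $D:=4m^2-\sum_i g_i^2+4\sum_{j=2,3,4}\beta_{1j}^2-12m=0$, where in general $D=12m(\lambda_2+\dots+\lambda_g)\ge0$ and every $g_i,\beta_{ij}$ divides the odd number $m$. If some $a_i$ equals $m$ — relabel it to $i=1$ by the symmetry above — then $\beta_{1j}=\gcd(m,m+a_j)=g_j$, so $D=3m^2+3(g_2^2+g_3^2+g_4^2)-12m\ge 3m^2-12m+9=3(m-1)(m-3)$, which is strictly positive for $m\ge5$ and vanishes (with $g_2=g_3=g_4=1$) only at $m=3$, recovering $M_6(1,1,1,3)$ and its dual. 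If no $a_i$ equals $m$, then every $g_i$ is a \emph{proper} divisor of $m$, hence $g_i\le m/3$ and $\sum g_i^2\le \tfrac{4}{9}m^2$; since $\beta_{ij}\ge1$ this gives $D\ge \tfrac{32}{9}m^2-12m+12$, whose discriminant is negative, so $D>0$ for every $m$. Thus for $m\ge5$ (that is $N\ge10$) the spectrum is never maximally degenerate, which together with the case $4\mid N$ confines the degenerate covers to $N\in\{2,4,6\}$, resolved by the explicit checks above.

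The one delicate point I anticipate is the case $N\equiv2\ (\mathrm{mod}\ 4)$: one should resist bounding $\sum g_i^2$ through the coprimality constraint $\gcd(g_1,g_2,g_3,g_4)=1$ directly, since the extremal configurations are awkward to classify, and instead exploit the dichotomy ``some $a_i=m$ versus none'', which both trivializes the $\beta_{1j}$ in the first case and forces $g_i\le m/3$ in the second. Checking that the resulting quadratics $3(m-1)(m-3)$ (for $m\ge5$) and $\tfrac{32}{9}m^2-12m+12$ (for all $m$) are positive is then routine.
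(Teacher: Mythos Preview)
Your argument is correct and starts from the same place as the paper --- formula~\eqref{eq:sum:of:exponents:for:cyclic:Abelian}, rewritten as the Diophantine condition you call~\eqref{eq:degen:diophantine} --- but the way you squeeze out the non-degenerate cases is genuinely different from the paper's. The paper observes only that each $\gcd(N,a_i)$ is at most $N/2$ (and if two of them equal $N/2$ then some $b_{1j}=N$, forcing the sum past~$1$), and otherwise at most $N/3$; this crude bound gives $\tfrac{N}{6}-\tfrac{1}{6N}\sum g_i^2\ge 5N/72$, which exceeds~$1$ once $N\ge 16$, and the paper then defers the range $6\le N\le 14$ to a ``straightforward check''. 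Your split on the $2$-adic valuation of $N$ is sharper: when $4\mid N$ the oddness of the $a_i$ forces $g_i\le N/4$ and $b_{ij}\ge 2$, which pins $N=4$ exactly via $3(N-4)^2\le 0$; when $N=2m$ with $m$ odd, your dichotomy ``some $a_i=m$ vs.\ none'' either collapses the $\beta_{1j}$ to $g_j$ (giving $D\ge 3(m-1)(m-3)$) or forces every $g_i\le m/3$ (giving the definite quadratic $\tfrac{32}{9}m^2-12m+12>0$). The payoff is that your argument is self-contained --- no residual enumeration for $N\in\{8,10,12,14\}$ --- at the cost of a slightly longer case analysis. The paper's route is quicker to write down but leaves a finite verification to the reader; yours trades that for a clean closed-form inequality in each branch.
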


\begin{remark}
The  fact  that  the Lyapunov spectrum of $M_4(1,1,1,1)$ is maximally
degenerate  was  discovered  by  G. Forni  in \cite{ForniSurvey} by a
symmetry  argument.  Later  G. Forni and C. Matheus discovered by the
same  approach  that  the Lyapunov spectrum of $M_6(1,1,1,3)$ is also
maximally       degenerate,       see \cite{Forni:Matheus}       (and
also \cite{CMFZ-survey}).
\end{remark}

\begin{remark}
M. M\"oller \cite{Moeller}  has  an  independent  and by far stronger
result  showing  that  the two above-mentioned examples of arithmetic
Teichm\"uller  curves  with  a maximally degenerate Lyapunov spectrum
are  really  exceptional;  see Conjecture \ref{conj:the:only:two} and
Remarks \ref{rm:Moeller}  and \ref{rm:EKZ:bound}  below.
\end{remark}

\begin{proof}
Applying   formula \eqref{eq:sum:of:exponents:for:cyclic:Abelian}  to
$M_4(1,1,1,1)$    and    $M_6(1,1,1,3)$    we    get    a    relation
$1+\lambda_2+\dots+\lambda_g=1$.                                Since
$\lambda_2\ge\dots\ge\lambda_g\ge  0$,  this  implies that, actually,
$\lambda_2=\dots=\lambda_g= 0$.

It  remains to prove that for all other collections $N,a_1,\dots,a_4$
the                 right-hand                 side                of
formula \eqref{eq:sum:of:exponents:for:cyclic:Abelian}   is  strictly
greater                than               $1$.               Applying
formula \eqref{eq:sum:of:exponents:for:cyclic:Abelian}  we  see  that
this  statement is valid for the remaining two collections for $N=4$.
Now we can assume that $N\ge 6$.

Since  $\gcd(N,a_i)$  is  a  divisor  of  $N$,  and  $1\le a_i<N$, we
conclude  that $\gcd(N,a_i)$ might be $N/2$, $N/3$ or less. Hence, we
always have
$$
\sum_{i=1}^4 {\gcd}^2(N,a_i)\le N^2\ .
$$
This  implies  that  if we have $\gcd(N,a_i)=\gcd(N,a_j)=N/2$ for  two  distinct indices $i\neq j$, then $a_i=a_j=N/2$ and at least one of
the summands in
$$
{\gcd}^2(N,a_1+a_2)+{\gcd}^2(N,a_1+a_3)+{\gcd}^2(N,a_1+a_4)
$$
is  equal  to  $N^2$.  This  means  that  the expression on the right
of \eqref{eq:sum:of:exponents:for:cyclic:Abelian} is strictly greater
than $1$.

Thus,  we  can  assume  that  there  is  at  most one $a_i$ such that
$\gcd(N,a_i)=N/2$,  while  for  the  other  indices $j\neq i$ we have
$\gcd(N,a_j)\le N/3$. Then
$$
\frac{N}{6}-\frac{1}{6N}\,\sum_{i=1}^4 {\gcd}^2(N,a_i)\ge
\frac{5N}{72}\ .
$$
For  $N\ge  16$  we  have  $5N/72>1$,  and  hence,  for $N\ge 16$ the
expression                 on                the                right
of \eqref{eq:sum:of:exponents:for:cyclic:Abelian} is strictly greater
than $1$.

It    remains    to    consider   finite   number   of   arrangements
$N,a_1,\dots,a_4$,  with $6\le N\le 14$. This can be done either by a
straightforward  check,  or  by considerations similar to the ones as
above.
\end{proof}

\begin{figure}[htb]
   %
   % Eierlegende Wollmilchsau
   %
\includegraphics{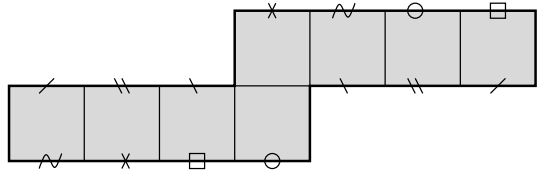}
\vspace{60pt}
\caption{
\label{fig:Eierlegende:Wollmilchsau}
Eierlegende Wollmilchsau}
\end{figure}

\begin{remark}
Figure \ref{fig:Eierlegende:Wollmilchsau}  presents  the square-tiled
cyclic  cover  $M_4(1,\!1,\!1,\!1)$.  This surface is also a $2$-fold
cover  over a torus branched at four points. It has plenty of unusual
properties.  In  particular,  it  was  noticed by T. Monteil that for
every  saddle  connection  there  is  always a twin saddle connection
with  the  same  length  and  the  same direction, and there are no
simple   saddle   connections   joining   a  singularity  to  itself.
M. M\"oller   has   proved  that  the  Teichm\"uller  curves  of  the
square-tiled cyclic covers $M_4(1,1,1,1)$,
presented in Figure \ref{fig:Eierlegende:Wollmilchsau},
and $M_6(1,1,1,3)$,
presented in Figure \ref{fig:cartoon},
are also Shimura curves \cite{Moeller}.
\end{remark}

\begin{conjecture}
\label{conj:the:only:two}
The  only Teichm\"uller curves in the strata of Abelian differentials
in genus $g\ge 2$ with maximally degenerate Lyapunov spectra, that
is,  such  that  $\lambda_2=\dots=\lambda_g=0$,  are  the  arithmetic
Teichm\"uller  curves  of  the  cyclic  coverings  $M_4(1,1,1,1)$ and
$M_6(1,1,1,3)$.
\end{conjecture}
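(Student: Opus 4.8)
The plan is to reduce the (genus-arbitrary) maximal-degeneracy property to a single scalar identity and then exploit the positivity built into the sum-of-exponents formula. In its general form, the main result of \cite{Eskin:Kontsevich:Zorich} expresses the sum $\lambda_1+\dots+\lambda_g$ over \emph{any} Teichm\"uller curve in a stratum $\mathcal{H}(m_1,\dots,m_n)$ as a purely topological quantity $\frac{1}{12}\sum_{i=1}^n \frac{m_i(m_i+2)}{m_i+1}$, depending only on the stratum, plus a nonnegative Siegel--Veech (cylinder) contribution. Maximal degeneracy is precisely the assertion that this sum equals $\lambda_1=1$. Discarding the nonnegative cylinder term, I would first extract the necessary condition
\[
\frac{1}{12}\sum_{i=1}^n \frac{m_i(m_i+2)}{m_i+1}\ \le\ 1,
\]
a finite combinatorial constraint on the singularity pattern alone.

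First I would dispose of large genus. Writing $\frac{m(m+2)}{m+1}=(m+1)-\frac{1}{m+1}$ and using $m_i\ge 1$, so that $\frac{1}{m_i+1}\le\frac12$, the topological term is bounded below by
\[
\frac{1}{12}\Big[(2g-2+n)-\frac{n}{2}\Big]\ \ge\ \frac{g-1}{6}.
\]
Hence the constraint fails as soon as $g$ exceeds a small explicit bound, and a sharper estimate stratum by stratum pushes this threshold down further (for instance, the minimal stratum $\mathcal{H}(2g-2)$ is already excluded in genus seven). This leaves only finitely many strata, all of small genus, to be examined individually.

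For the remaining low-genus strata I would proceed on two fronts. Within the class of square-tiled cyclic covers the explicit formula of Theorem \ref{th:sum:of:exponents:for:cyclic:Abelian} reduces the question to the elementary arithmetic inequality analyzed in the proof of Theorem \ref{th:degenerate:spectrum:Abelian}, singling out $M_4(1,1,1,1)$ and $M_6(1,1,1,3)$. For an \emph{arbitrary} Teichm\"uller curve the cylinder term cannot be computed so cheaply, so here I would follow M\"oller's strategy \cite{Moeller}: vanishing of $\lambda_2,\dots,\lambda_g$ forces the variation of Hodge structure to split off a maximal flat (unitary) sub-bundle, which through the variational formulas and the analysis of the second fundamental form of \cite{Forni}, \cite{ForniSurvey} constrains the curve to be a Shimura curve and imposes strong arithmetic (torsion) conditions on the vanishing cycles at its cusps. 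One then checks that these conditions are satisfiable only by the two covers above.

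The main obstacle is precisely this last, finite, low-genus analysis. The topological bound is far from tight once $g\le 6$, so the cylinder term can no longer be ignored, and the arithmetic conditions produced by the Hodge-theoretic argument become delicate---M\"oller was able to verify them in all but a handful of strata in genus five. Closing that gap, and with it the full conjecture, requires either sharpening the Siegel--Veech lower bound so as to eliminate more strata outright or a finer case-by-case algebro-geometric argument; this is why the statement is established unconditionally only within the square-tiled cyclic covers treated here (Theorem \ref{th:degenerate:spectrum:Abelian}) and, for sufficiently high genus, by the general bound of \cite{Eskin:Kontsevich:Zorich}.
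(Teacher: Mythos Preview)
The statement you were asked to prove is not a theorem in the paper: it is Conjecture~\ref{conj:the:only:two}, and the paper offers no proof of it. The surrounding remarks (Remarks~\ref{rm:Moeller} and~\ref{rm:EKZ:bound}) record the partial results known at the time: M\"oller's argument in \cite{Moeller} covers all genera except some strata in genus five, and the general bound of \cite{Eskin:Kontsevich:Zorich} handles all genera at least seven. The paper's own contribution to the conjecture is only Theorem~\ref{th:degenerate:spectrum:Abelian}, which settles it \emph{within the class of square-tiled cyclic covers}.

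Your proposal is not a proof either, and to your credit you say so in the final paragraph. What you have written is an accurate survey of the available ingredients: the reduction via the Eskin--Kontsevich--Zorich formula to the inequality $\tfrac{1}{12}\sum m_i(m_i+2)/(m_i+1)\le 1$, the crude lower bound $\ge (g-1)/6$ disposing of large genus, the cyclic-cover case via Theorem~\ref{th:sum:of:exponents:for:cyclic:Abelian}, and the Hodge-theoretic/Shimura-curve approach of M\"oller for general Teichm\"uller curves. But the ``main obstacle'' you name at the end is precisely the content of the conjecture that remains open, so the proposal terminates in the same gap the paper leaves. In short: there is nothing to compare your argument to, because the paper does not claim a proof; and your outline, while a reasonable roadmap, is explicitly incomplete in the same place the literature was.
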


\begin{remark}
\label{rm:Moeller}
According  to M. M\"oller \cite{Moeller}, the conjecture holds in all
genera different from five and for some strata in genus five. For the
remaining  strata  in  genus  five  the statement requires some extra
verification.
\end{remark}

\begin{problem}\hspace*{-3.5pt}
Are  there  any  closed  invariant suborbifolds (of any dimension) in
strata  of  Abelian  differentials with maximally degenerate Lyapunov
spectra  ($\lambda_2=\dots=\lambda_g=0$)  different  from  the  two
Teichm\"uller curves presented above?
\end{problem}

\begin{remark}
\label{rm:EKZ:bound}
It is proved in \cite{Eskin:Kontsevich:Zorich} that there are no such
regular  $\textrm{SL}(2,\mathbb{R})$-invariant  suborbifolds  in  \textit{any}  stratum  of
Abelian  differentials  of  genus $7$ and higher and in \textit{some}
strata in genera $5$ and $6$.
\end{remark}

\begin{remark}
In  genus  $g=2$, M. Bainbridge \cite{Bainbridge} has proved that the
second   exponent   $\lambda_2$  equals  to  $1/2$  for  all  ergodic
$\textrm{SL}(2,\mathbb{R})$-invariant   measures   supported  in  the  stratum  $\mathcal{H}(1,1)$,
corresponding  to  two  simple zeros of the holomorphic differential,
and  $\lambda_2$  equals  to  $1/3$  for  all ergodic $\textrm{SL}(2,\mathbb{R})$-invariant
measures  supported  in  the  stratum  $\mathcal{H}  (2)$, corresponding to a
double  zero.  In  particular,  in genus $2$ the Lyapunov spectrum is
always  non-degenerate  and  simple.  Bainbridge's result was already
known  conjecturally  since \cite{Kontsevich}  as  a consequence of a
formula  for  the sum of exponents for an $\textrm{SL}(2,\mathbb{R})$-invariant submanifold
of  the  hyperelliptic  locus  in any stratum. Such a formula has now
been proved in \cite{Eskin:Kontsevich:Zorich}.
\end{remark}

Thus,  any  regular  $\textrm{SL}(2,\mathbb{R})$-invariant  suborbifold with maximally
degenerate  Lyapunov spectrum might live in genera $3$ and $4$ and in
some strata in genera $5$ and $6$ only.

%-------------------------------------------------------------
\subsubsection{Holomorphic quadratic differentials}
\label{ss:Holomorphic:quadratic:differentials}

Let  us  consider  next square-tiled cyclic co\-vers $M_N(a_1,a_2,a_3,a_4)$, which give
rise to holomorphic quadratic differentials. In particular, we assume
through   Section \ref{ss:Holomorphic:quadratic:differentials}   that
inequalities in formula (\ref{eq:a1:a4}) are strict.

\begin{theorem}
\label{th:degenerate:spectrum:quadratic}
A  square-tiled cyclic cover $M_4(3,2,2,1)$ in the stratum $\mathcal{Q}(2,2)$
has  effective  genus  one, so there is a single nonnegative Lyapunov
exponent  $\lambda^-_1$  of  the  vector  bundle  $H^1_-$  along  the
geodesic  flow  on  the arithmetic Teichm\"uller curve of this cyclic
cover; as always $\lambda^-_1=1$.

The Lyapunov spectrum of the vector bundle $H^1_-$ along the geodesic
flow  on  the arithmetic Teichm\"uller curves of the following cyclic
covers
\begin{itemize}
\item
$M_5(2,1,1,1)\simeq M_5(4,2,2,2)\simeq M_5(1,3,3,3)\simeq M_5(3,4,4,4)$
in
  % the stratum
$\mathcal{Q}(3,3,3,3)$;
\item
$M_6(5,3,2,2)\simeq M_6(1,3,4,4)$ in the stratum $\mathcal{Q}(4,1,1,1,1)$;
\item
$M_8(4,2,1,1)\simeq M_8(4,6,3,3)\simeq M_8(4,2,5,5)\simeq M_8(4,6,7,7)$
and
$M_8(7,4,3,2)\simeq M_8(1,4,5,6)$
in the stratum $\mathcal{Q}(6,6,2,2)$
\end{itemize}
is maximally degenerate, that is, $\lambda^-_2=\dots=\lambda^-_{g_{\mathit{eff}}}=0$.

For  all other cyclic covers of the form $M_N(a_1,a_2,a_3,a_4)$ with odd $N$, or with
even   $N$  and  at  least  one  even  $a_i$,  $i=1,2,3,4$,  one  has
$g_{\mathit{eff}}\ge  2$  and  $\lambda^-_2>0$.  Here  we assume that
$0<a_i<N$ for all $i=1,2,3,4$.
\end{theorem}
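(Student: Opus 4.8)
The plan is to reduce the entire statement to an analysis of the explicit formula \eqref{eq:sum:of:minus:exponents:for:cyclic:quadratic} for $1+\lambda^-_2+\dots+\lambda^-_{g_{\mathit{eff}}}$, exactly in the spirit of the proof of Theorem \ref{th:degenerate:spectrum:Abelian}. Since $\lambda^-_2\ge\dots\ge\lambda^-_{g_{\mathit{eff}}}\ge 0$, the left-hand side equals $1$ if and only if all $\lambda^-_i$ with $i\ge 2$ vanish; and if the right-hand side is strictly greater than $1$, then automatically $g_{\mathit{eff}}\ge 2$ and $\lambda^-_2>0$. Thus the theorem amounts to solving the equation ``right-hand side of \eqref{eq:sum:of:minus:exponents:for:cyclic:quadratic} $=1$'' over the admissible tuples $(N,a_1,\dots,a_4)$ of \eqref{eq:a1:a4} with $0<a_i<N$ strictly (the holomorphic case), and then separating the unique solution with $g_{\mathit{eff}}=1$, namely $M_4(3,2,2,1)$ in $\mathcal{Q}(2,2)$, from those with $g_{\mathit{eff}}\ge 2$. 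Here $g_{\mathit{eff}}=\hat g-g$ is read off from \eqref{eq:genus} applied to the orienting double cover, or from the stratum via Lemma \ref{lm:singularity:pattern}.

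First I would verify the list. For each displayed tuple I substitute $d_i:=\gcd(N,a_i)$ and the three pillow sums $\gcd(N,a_1+a_2)$, $\gcd(N,a_1+a_3)$, $\gcd(N,a_1+a_4)$ into \eqref{eq:sum:of:minus:exponents:for:cyclic:quadratic} and check directly that the value is $1$; the ambient strata $\mathcal{Q}(3,3,3,3)$, $\mathcal{Q}(4,1,1,1,1)$, $\mathcal{Q}(6,6,2,2)$ come from Lemma \ref{lm:singularity:pattern}, and the asserted isomorphisms $\simeq$ within each line follow from Lemma \ref{lm:dual} together with Definition \ref{def:symmetry} (multiplication by a primitive $k\in\mathbb{Z}/N\mathbb{Z}$). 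For $M_4(3,2,2,1)$ one checks in addition that $g_{\mathit{eff}}=1$, so the sum formula reads simply $\lambda^-_1=1$ and degeneracy is vacuous.

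The heart of the argument is the converse: no other admissible tuple gives right-hand side $1$. Because $0<a_i<N$, each $d_i$ is a \emph{proper} divisor of $N$, so $d_i\le N/2$, and the only negative contribution in \eqref{eq:sum:of:minus:exponents:for:cyclic:quadratic} (the indices with $N/d_i$ even) is bounded below by $-\frac{1}{6N}\sum_i d_i^2\ge -\frac{1}{6N}\cdot 4(N/2)^2=-N/6$. I would then split on how many $d_i$ attain the maximum $N/2$. If two distinct indices $i,j$ have $d_i=d_j=N/2$, then $a_i=a_j=N/2$, so $a_i+a_j\equiv 0\pmod N$; using $\sum a_i\equiv 0$ to match the three pillow sums with the three pairings of $\{1,2,3,4\}$, one of the cross terms equals $N^2$, contributing $+N/6$ and pushing the right-hand side above $1$. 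If at most one $d_i=N/2$, the remaining $d_j$ are proper divisors strictly below $N/2$, hence $\le N/3$, giving $\sum_i d_i^2\le (N/2)^2+3(N/3)^2=7N^2/12$ and therefore right-hand side $\ge N/6-7N/72=5N/72$, which exceeds $1$ as soon as $N\ge 15$. This bounds $N\le 14$, leaving finitely many tuples in the quadratic range ($N$ odd, or $N$ even with some $a_i$ even), which I dispatch by a direct, possibly computer-assisted, evaluation of \eqref{eq:sum:of:minus:exponents:for:cyclic:quadratic}, retaining only the value-$1$ solutions and grouping them into the isomorphism classes listed.

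The main obstacle is exactly this interplay of signs in the bounded range: unlike the Abelian case of Theorem \ref{th:degenerate:spectrum:Abelian}, where every term of the sum formula is nonnegative, here the even-ramification singularities contribute \emph{negatively}, and one must show this is always overcompensated by the linear term $N/6$ and the nonnegative cross terms. This compensation is delicate precisely when several $d_i$ are large, and it is pinned down only by careful use of the two constraints $\gcd(N,a_1,\dots,a_4)=1$ and $\sum a_i\equiv 0\pmod N$; the finite check for $N\le 14$ must be organized so that these relations are systematically exploited rather than verified tuple by tuple by brute force.
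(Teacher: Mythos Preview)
Your approach is essentially identical to the paper's: verify the listed tuples via formula \eqref{eq:sum:of:minus:exponents:for:cyclic:quadratic}, then argue exactly as in the proof of Theorem \ref{th:degenerate:spectrum:Abelian} (the paper in fact just says ``completely analogous'' and stops). You have correctly supplied those analogous details, including the extra care needed because the singularity terms in the minus-formula can be negative.

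One small imprecision to watch: your assertion that two indices with $d_i=d_j=N/2$ always push the right-hand side strictly above $1$ is false at $N=4$. There the only admissible tuple (up to reordering) is $(2,2,1,3)$, and the formula yields exactly $1$; this is precisely the exceptional $g_{\mathit{eff}}=1$ cover $M_4(3,2,2,1)$ in the statement. The fix is immediate---either fold $N=4$ into the finite check, or note that in the two-equal case one necessarily has $d_k,d_l\le N/3$ for the remaining indices (exactly two $d_i$ can equal $N/2$, by the $\gcd$ and sum conditions), which gives a lower bound of $23N/108$ and hence strictly exceeds $1$ once $N\ge 6$---but as written your case split would miss this tuple.
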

\begin{proof}
Applying
formula \eqref{eq:sum:of:minus:exponents:for:cyclic:quadratic} to the
cyclic  covers  from  the  list  given  in  the Theorem we check that
$1+\lambda^-_2+\dots+\lambda^-_{g_{\mathit{eff}}}=1$.           Since
$\lambda^-_2\ge\dots\ge\lambda^-_{g_{\mathit{eff}}}\ge 0$ this proves
the  equalities $\lambda^-_2=\dots=\lambda^-_{g_{\mathit{eff}}}=0$ in
the cases mentioned above.
The remaining part of the proof is completely analogous to the one of
Theorem \ref{th:degenerate:spectrum:Abelian}.
\end{proof}

\begin{problem}
\label{pr:holomorphic:quadratic}
Are  there any other Teichm\"uller curves in a stratum of holomorphic
quadratic  differentials  with  effective genus at least two and with
maximally degenerate Lyapunov spectrum of the bundle $H^1_-$?

\hspace*{-3pt}
Are   there  any  closed  invariant  submanifolds  (suborbifolds)  of
dimension greater than one satisfying this property?
\end{problem}

\begin{remark}
The  only  holomorphic  quadratic  differentials in genus one are the
squares  of holomorphic $1$-forms. In genus two the strata $\mathcal{Q}(4)$ and
$\mathcal{Q}(3,1)$   are   empty,   see \cite{Masur:Smillie}.   The   stratum
$\mathcal{Q}(2,2)$  in  genus  two has effective genus one. The remaining two
strata,  namely $\mathcal{Q}(2,1,1)$ and $\mathcal{Q}(1,1,1,1)$, have effective genera
two  and  three  respectively;  both  of  them  are hyperelliptic,
see \cite{Lanneau}.  It  is  proved in \cite{Eskin:Kontsevich:Zorich}
that  one  has  $\lambda^-_2  =1/3$  for any regular $\textrm{PSL}(2,\mathbb{R})$-invariant
suborbifold       in       $\mathcal{Q}_1(2,1,1)$       and      one      has
$\lambda_2^-+\lambda_3^-=2/3$   for   any   regular  $\textrm{PSL}(2,\mathbb{R})$-invariant
suborbifold in $\mathcal{Q}_1(1,1,1,1)$.

It  is  proved  in \cite{Eskin:Kontsevich:Zorich}  that  there are no
regular $\textrm{PSL}(2,\mathbb{R})$-invariant suborbifolds
with maximally  degenerate Lyapunov spectra on the bundle $H^1_-$
in  \textit{any}  stratum  of  holomorphic quadratic differentials of
genus  $7$  and  higher and in \textit{some} strata in genera $5$ and
$6$. Thus, if the answer to Problem \ref{pr:holomorphic:quadratic} is
affirmative,   the   corresponding   invariant   suborbifold   should
correspond  to  genera  $3$  or  $4$  or to some particular strata in
genera $5$ or $6$.
\end{remark}

A formula in \cite{Eskin:Kontsevich:Zorich}, which generalizes
formula \eqref{eq:general:plus:sum:of:exponents:for:quadratic},
shows that one always has  $\lambda_1^+>0$, hence the spectrum of
Lyapunov exponents of the subbundle $H^1_+$ over an  $\textrm{PSL}(2,\mathbb{R})$-invariant
submanifold in a stratum of \textit{holomorphic} quadratic
differentials  can never be maximally degenerate. Theorem \ref{th:degenerate:spectrum:quadratic:meromorphic}   in   the
section  below  shows  that  the  situation  is  different for strata
of \textit{meromorphic} quadratic differentials with at most simple
poles.

%-------------------------------------------------------------
\subsubsection{Meromorphic quadratic differentials}

Let us consider  square-tiled  cyclic  covers  $M_N(a_1,a_2,a_3,a_4)$  which  give  rise to
meromorphic  quadratic  differentials  with  simple  poles. This case
corresponds  to collections $(N;a_1,a_2,a_3,a_4)$ of parameters where
at least one of the $a_i$'s is equal to $N$.

\begin{theorem}
\label{th:degenerate:spectrum:quadratic:meromorphic}
If  $a_i=N$  for  at  least  one  of  $i=1,2,3,4$,  then the Lyapunov
spectrum  of the vector bundle $H^1_+$ along the geodesic flow on the
arithmetic     Teichm\"uller    curve    of    the    cyclic    cover
$M_N(a_1,a_2,a_3,a_4)$   is   maximally   degenerate:   all  Lyapunov
exponents $\lambda^+_i$ are equal to zero.
\end{theorem}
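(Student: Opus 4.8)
The plan is to apply formula \eqref{eq:sum:of:plus:exponents:for:cyclic:quadratic} of Theorem \ref{th:sum:of:exponents:for:cyclic:quadratic} and to show that its right-hand side vanishes identically under the hypothesis that some $a_i=N$. First I would check that the formula is applicable. If $a_i=N$ for some $i$, then $\gcd(N,a_i)=N$, so the corresponding branch point carries a singularity of degree $N/\gcd(N,a_i)-2=-1$, that is, a simple pole, and by Remark \ref{rm:holomorphic:meromorphic} the differential $q=p^\ast q_0$ is genuinely meromorphic. In particular it is not a global square of a holomorphic $1$-form: either $N$ is odd, or $N$ is even and the entry $a_i=N$ is itself even, so by Lemma \ref{lm:singularity:pattern} we are in the quadratic (non-square) case. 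Both alternatives also verify the standing hypothesis of Theorem \ref{th:sum:of:exponents:for:cyclic:quadratic}, so $\lambda^+_1+\dots+\lambda^+_g$ is indeed given by \eqref{eq:sum:of:plus:exponents:for:cyclic:quadratic}.

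Next I would reduce to the case $a_1=N$. Although the formula singles out $a_1$ in the three cross terms ${\gcd}^2(N,a_1+a_j)$, its right-hand side is in fact symmetric under all permutations of $(a_1,a_2,a_3,a_4)$. The first two summands are manifestly symmetric, and for the cross terms the congruence $\sum_i a_i\equiv 0\pmod N$ yields $\gcd(N,a_1+a_2)=\gcd(N,a_3+a_4)$ and likewise for the other pairs; hence the three terms range exactly over the three complementary pairings $\{12|34\},\{13|24\},\{14|23\}$ of $\{1,2,3,4\}$. Since $\mathfrak{S}_4$ permutes these three pairings through the quotient $\mathfrak{S}_3\simeq\mathfrak{S}_4/\mathfrak{K}$ of formula \eqref{eq:projection:S4:to:S3}, their sum is permutation invariant. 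Thus I may relabel so that $a_1=N$.

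With $a_1=N$ the computation is immediate: $\gcd(N,a_1)=N$, while $\gcd(N,a_1+a_j)=\gcd(N,N+a_j)=\gcd(N,a_j)$ for $j=2,3,4$. Substituting into \eqref{eq:sum:of:plus:exponents:for:cyclic:quadratic},
\begin{equation*}
\lambda^+_1+\dots+\lambda^+_g=\frac{N}{6}-\frac{1}{6N}\Big(N^2+\sum_{j=2}^4{\gcd}^2(N,a_j)\Big)+\frac{1}{6N}\sum_{j=2}^4{\gcd}^2(N,a_j)=\frac{N}{6}-\frac{N}{6}=0\,,
\end{equation*}
so the three ${\gcd}^2(N,a_j)$ contributions cancel exactly against the cross terms, and the term $N^2$ produced by the pole cancels the leading $N/6$.

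Finally I would conclude from nonnegativity. The cocycle on $H^1_+$ is symplectic, so $\lambda^+_k=-\lambda^+_{2g-k+1}$; taking $k=g$ gives $\lambda^+_g=-\lambda^+_{g+1}\ge 0$, whence $\lambda^+_1\ge\dots\ge\lambda^+_g\ge 0$. A sum of nonnegative numbers that vanishes forces each summand to vanish, so $\lambda^+_1=\dots=\lambda^+_g=0$, and by the symplectic symmetry the remaining exponents vanish as well; the full $H^1_+$-spectrum is maximally degenerate. I do not expect any genuine obstacle here: the only points requiring care are the verification that the formula applies and that its right-hand side is permutation symmetric, the remainder being the telescoping cancellation above. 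This also explains structurally why the phenomenon is confined to the meromorphic case: it is precisely the contribution ${\gcd}^2(N,a_1)=N^2$ of the simple pole that annihilates the leading $N/6$, which is impossible when every singularity has nonnegative degree (compare the strict inequality $\lambda_1^+>0$ quoted for holomorphic quadratic differentials).
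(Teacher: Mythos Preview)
Your proof is correct and follows essentially the same approach as the paper: assume without loss of generality that $a_1=N$, substitute into formula \eqref{eq:sum:of:plus:exponents:for:cyclic:quadratic}, and observe the telescoping cancellation. You supply more detail than the paper does---verifying that the hypotheses of Theorem \ref{th:sum:of:exponents:for:cyclic:quadratic} are met, justifying the permutation symmetry that underlies the ``without loss of generality'', and spelling out the nonnegativity step---but the core argument is identical.
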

\begin{proof}
Without  loss  of  generality  we  may  assume that $a_1=N$. Applying
formula \eqref{eq:sum:of:plus:exponents:for:cyclic:quadratic} we get:
\begin{multline*}
\lambda^+_1+\lambda^+_2 + \dots + \lambda^+_g
\ = \
\frac{N}{6}-\frac{{\gcd}^2(N,N)}{6 N}-
\frac{1}{6 N}\,\sum_{i=2}^4 {\gcd}^2(N,a_i)
\\+\
\cfrac{1}{6N}
\bigg({\gcd}^2(N,N+a_2)+{\gcd}^2(N,N+a_3)+{\gcd}^2(N,N+a_4)\bigg)
\\=\
-\frac{1}{6 N}\,\sum_{i=2}^4 {\gcd}^2(N,a_i)
+\frac{1}{6 N}\,\sum_{i=2}^4 {\gcd}^2(N,a_i)
\ =\ 0
\end{multline*}

\end{proof}

\begin{theorem}
\label{th:degenerate:spectrum:quadratic:meromorphic:special}
The  square-tiled  cyclic cover $M_2(2,2,1,1)$ belongs to the stratum
$\mathcal{Q}(-1^4)$.   There   is  a  single  nonnegative  Lyapunov  exponent
$\lambda^-_1=1$  of the vector bundle $H^1_-$ along the geodesic flow
on  the  arithmetic  Teichm\"uller  curve of this cyclic cover and no
other Lyapunov exponents.

The  Lyapunov spectrum of the vector bundle $H^1_-\oplus H^1_+$ along
the  geodesic  flow  on  the  arithmetic  Teichm\"uller  curve of the
following cyclic covers in genus one
\begin{itemize}
\item
$M_3(3,1,1,1)\simeq M_3(3,2,2,2)$
in the stratum $\mathcal{Q}(1^3,-1^3)$;
\item
$M_4(4,2,1,1)\simeq M_4(4,2,3,3)$ in the stratum $\mathcal{Q}(2^2,-1^4)$
\end{itemize}
 is maximally degenerate, that is,
$\lambda^-_2=\dots=\lambda^-_{g_{\mathit{eff}}}=0$ and
$\lambda^+_1=0$.

For  all other cyclic covers of the form $M_N(N,a_2,a_3,a_4)$ one has
$g_{\mathit{eff}}\ge 2$ and $\lambda^-_2>0$.
\end{theorem}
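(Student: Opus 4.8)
The plan is to combine the machinery already assembled---the genus formula \eqref{eq:genus}, the determination of the stratum in Lemma \ref{lm:singularity:pattern}, and above all the sum-of-exponents formula \eqref{eq:sum:of:minus:exponents:for:cyclic:quadratic}---with the observation that this formula collapses dramatically once one of the parameters equals $N$. Write $g_i:=\gcd(N,a_i)$ and assume without loss of generality that $a_1=N$, so that $g_1=N$ and $N/g_1=1$ is odd; in particular the differential is automatically not a global square, so the $H^1_-$ formula always applies. Since $a_1+a_j\equiv a_j\pmod N$, we have $\gcd(N,a_1+a_j)=g_j$ for $j=2,3,4$, and the three cross terms in \eqref{eq:sum:of:minus:exponents:for:cyclic:quadratic} become $\tfrac{1}{6N}(g_2^2+g_3^2+g_4^2)$. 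Feeding this in, the base term $\tfrac{N}{6}$ and the $i=1$ contribution $\tfrac{1}{12N}g_1^2=\tfrac{N}{12}$ combine to $\tfrac{N}{4}$, while for each $i\in\{2,3,4\}$ the three relevant terms cancel to $\tfrac{1}{4N}g_i^2$ when $N/g_i$ is odd and to $0$ when $N/g_i$ is even. Thus I expect the whole identity to reduce to
$$
1+\lambda^-_2+\dots+\lambda^-_{g_{\mathit{eff}}}=\frac{N}{4}+\frac{1}{4N}\sum_{\substack{i\in\{2,3,4\}\\ N/g_i\text{ odd}}} g_i^2\,.
$$

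With this identity in hand the degenerate examples are immediate. For $M_2(2,2,1,1)$ formula \eqref{eq:genus} gives $g=0$, Lemma \ref{lm:singularity:pattern} places it in $\mathcal{Q}(-1^4)$, and a Riemann--Hurwitz count on the orienting double cover (branched over the four simple poles) gives $\hat g=1$, hence $g_{\mathit{eff}}=1$ by \eqref{eq:g:eff}; so the only nonnegative exponent is $\lambda^-_1=1$. For the genus-one families I would first check via Lemma \ref{lm:singularity:pattern} that $M_3(3,1,1,1)\in\mathcal{Q}(1^3,-1^3)$ and $M_4(4,2,1,1)\in\mathcal{Q}(2^2,-1^4)$, and then plug into the displayed identity: for $N=3$ all $N/g_i$ are odd with sum $3$, giving $\tfrac34+\tfrac14=1$; for $N=4$ no index $i\ge2$ has $N/g_i$ odd, giving $\tfrac44+0=1$. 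Since $\lambda^-_2\ge\dots\ge\lambda^-_{g_{\mathit{eff}}}\ge 0$, the value $1$ forces $\lambda^-_2=\dots=\lambda^-_{g_{\mathit{eff}}}=0$. The vanishing $\lambda^+_1=0$ is not new: it is Theorem \ref{th:degenerate:spectrum:quadratic:meromorphic}, applicable precisely because $a_1=N$. The isomorphisms $M_3(3,1,1,1)\simeq M_3(3,2,2,2)$ and $M_4(4,2,1,1)\simeq M_4(4,2,3,3)$ come from Lemma \ref{lm:dual} (multiplication by $k=2$ and $k=3$).

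For completeness the decisive point is that the left-hand side of \eqref{eq:sum:of:minus:exponents:for:cyclic:quadratic} equals $1+\lambda^-_2+\dots+\lambda^-_{g_{\mathit{eff}}}$, so proving that the right-hand side is strictly greater than $1$ simultaneously yields $g_{\mathit{eff}}\ge 2$ (the sum must be nonempty) and $\lambda^-_2>0$ (the exponents being ordered and nonnegative). From the displayed identity the right-hand side is at least $N/4$, which exceeds $1$ for every $N\ge 5$; this disposes of all but finitely many covers at once. The remaining cases $N\in\{2,3,4\}$ with $a_1=N$ form a short list constrained by $a_2+a_3+a_4\equiv 0\pmod N$ and $\gcd(N,a_2,a_3,a_4)=1$; running through them and evaluating the identity should show that the value $1$ is attained exactly for $M_2(2,2,1,1)$, $M_3(3,1,1,1)$ and $M_4(4,2,1,1)$ (and their listed isomorphs), and strictly exceeds $1$ otherwise. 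I expect the only real work to lie in this small-$N$ bookkeeping---in particular in checking that every degenerate collection in the list coincides, up to Lemma \ref{lm:dual} and the pillow symmetries, with one of the three named covers---while the structural input (the collapse of the formula and the logical link between \emph{right-hand side} $>1$, $g_{\mathit{eff}}\ge 2$ and $\lambda^-_2>0$) carries the argument.
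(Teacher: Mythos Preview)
Your proof is correct and follows essentially the same route as the paper: specialize formula \eqref{eq:sum:of:minus:exponents:for:cyclic:quadratic} to $a_1=N$, absorb the $i=1$ term into $N/6+N/12=N/4$, and use $\gcd(N,a_1+a_j)=g_j$ to see that the remaining contributions are nonnegative, whence the sum is $\ge N/4>1$ for $N\ge 5$ and the small cases $N\in\{2,3,4\}$ are checked directly. Your further simplification to the exact expression $\tfrac{N}{4}+\tfrac{1}{4N}\sum_{i\ge 2,\ N/g_i\ \text{odd}} g_i^2$ is a slight sharpening of the paper's $\ge N/4$, and your explicit remark that ``RHS $>1$'' simultaneously forces $g_{\mathit{eff}}\ge 2$ and $\lambda^-_2>0$ makes transparent a point the paper leaves implicit.
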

\begin{proof}
Without   loss  of  generality  we  may  assume  that  $a_1=N\ge  3$.
Formula \eqref{eq:sum:of:minus:exponents:for:cyclic:quadratic}
applied to this case can be rewritten as follows:
\begin{multline*}
1+\lambda^-_2 + \dots + \lambda^-_{g_{\mathit{eff}}}
\ = \ \frac{N}{4}  \\  \ +\
\frac{1}{12 N}\,
\sum_{\substack{
i\ge 2\text{ such that }\\
\vspace*{-.5\baselineskip}\\
\frac{N}{\gcd(N,a_i)}\text{ is odd}
}}
{\gcd}^2(N,a_i)
\ -\
\frac{1}{6 N}\,
\sum_{\substack{
i\text{ such that }\\
\vspace*{-.5\baselineskip}\\
\frac{N}{\gcd(N,a_i)}\text{ is even}
}}
{\gcd}^2(N,a_i)
 \\ \ +\
\cfrac{1}{6N}
\bigg({\gcd}^2(N,a_2)+{\gcd}^2(N,a_3)+{\gcd}^2(N,a_4)\bigg)
\ \ge\
\frac{N}{4}
\end{multline*}
Hence,  for  $N>4$ we get $\lambda^-_2>0$. Applying the above formula
to remaining data for $N=2,3,4$ we complete the proof of the Theorem.
\end{proof}

\begin{remark}
The          maximally         degenerate         examples         in
Theorem \ref{th:degenerate:spectrum:quadratic:meromorphic:special}
have  the  following  origin.  For each of the cyclic covers as above
consider  a  canonical  ramified  double cover, such that the induced
quadratic  differential  is  a global square of a holomorphic $1$-form.
The  resulting square-tiled surface is isomorphic to the square-tiled
cyclic cover $M_6(3,1,1,1)$ for the double cover over $M_3(3,1,1,1)$,
and  also to  the  square-tiled cyclic cover $M_4(1,\!1,\!1,\!1)$
for the double
cover     over     $M_4(4,2,1,\!1)$.     Thus    the    statement    of
Theorem \ref{th:degenerate:spectrum:quadratic:meromorphic:special}
for       such       examples      follows      immediately      from
Theorem \ref{th:degenerate:spectrum:Abelian}.
\end{remark}

\begin{problem}
Are  there  other  Teichm\"uller  curves  in  strata  of  meromorphic
quadratic  differentials  with  at  most  simple poles with effective
genus at least two and with maximally degenerate Lyapunov spectrum of
the  bundle  $H^1_+$?  Same question for the bundle $H^1_-$? For both
bundles $H^1_-$ and $H^1_+$ simultaneously?

\hspace*{-2.5pt}
Are   there  any  closed  invariant  submanifolds  (suborbifolds)  of
dimension greater than one satisfying this property?
\end{problem}

\appendix
%---------------------------------------------------------------
\section{Parity of the spin structure}
\label{a:parity:of:spin}

The  ambient  stratum $\mathcal{H}(1,1,1,1)$ for the Eierlegende Wollmilchsau
representing the square-tiled $M_4(1,1,1,1)$ is connected.
The   ambient  stratum  $\mathcal{H}(2,2,2)$  for  the  square-tiled  surface
corresponding   to  the  cyclic  cover  $M_6(1,1,1,3)$  contains  two
connected   components  representing  even  and  odd  parity  of  the
spin-structure    of    the    corresponding    holomorphic   $1$-form,
see \cite{Kontsevich:Zorich}.
The  corresponding  parity  of the spin-structure of the square-tiled
cyclic  cover $M_6(1,1,1,3)$ was computed in \cite{Matheus:Yoccoz} by
combinatorial  methods.  We present another calculation to illustrate
an  alternative  analytic technique, which is less known in the dynamical
community.

\begin{proposition}
A   holomorphic  $1$-form  $\omega$  defining  a  square-tiled  surface
associated  to the cyclic cover $M_6(1,1,1,3)$ has even parity of the
spin structure, hence
$$
(M_6(1,1,1,3),\omega)\in \mathcal{H}^{even}(2,2,2)\ .
$$
\end{proposition}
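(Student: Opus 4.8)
The plan is to compute the parity directly as $\dim H^0(M,L)\bmod 2$, where $M=M_6(1,1,1,3)$ and $L$ is the theta characteristic (square root of the canonical bundle) determined by $\omega$. By Lemma \ref{lm:singularity:pattern} the form $\omega$ lies in $\mathcal{H}(2,2,2)$, with a double zero at each of the three totally ramified points $P_1,P_2,P_3$ over $z_1,z_2,z_3$ (the three points over $z_4$ being regular, since $\gcd(6,3)=3$ forces cone angle $2\pi$ there). Hence $\operatorname{div}(\omega)=2(P_1+P_2+P_3)$ and $L=\mathcal{O}(P_1+P_2+P_3)$, a square root of $K$ of degree $g-1=3$. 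By the Atiyah--Mumford description of spin structures underlying the parity invariant of \cite{Kontsevich:Zorich}, the parity equals $\dim H^0(M,L)\pmod 2$. Since $\deg L=g-1$, Riemann--Roch together with Serre duality only yields $h^0(L)=h^1(L)$ and is inconclusive, so an honest computation is needed. I identify $H^0(M,L)$ with the space of meromorphic functions $f$ on $M$ satisfying $(f)+P_1+P_2+P_3\ge 0$ and aim to show its dimension is even.

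The main idea is to exploit the cyclic action $T(z,w)=(z,\zeta w)$. Because each $P_i$ is the unique (totally ramified) preimage of $z_i$, the divisor $P_1+P_2+P_3$ is $T$-invariant, so $H^0(M,L)$ splits into $T^\ast$-eigenspaces. Writing $\mathbb{C}(M)=\mathbb{C}(z)[w]$ with $w^6\in\mathbb{C}(z)$ and $T^\ast w^j=\zeta^j w^j$, the $\zeta^j$-eigenspace consists exactly of the functions $w^j r(z)$ with $r$ rational on $\mathbb{P}^1(\mathbb{C})$, for $j=0,\dots,5$. The key computation is the divisor of $w$: from $6\cdot\operatorname{ord}_\bullet(w)=\sum_i a_i\operatorname{ord}_\bullet(z-z_i)$ and the ramification data (index $6$ over $z_1,z_2,z_3$, index $2$ at each of the three points over $z_4$, and no ramification over $\infty$, where $\sum a_i=6$ makes $w\sim z$), one finds that $w$ has a simple zero at each $P_i$ and at each of the three points $Q_m$ over $z_4$, and a simple pole at each of the six points over $\infty$.

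For each $j$ I then translate $(w^jr)+P_1+P_2+P_3\ge 0$ into bounds on $\operatorname{div}(r)$ on the base. Imposing the inequality at $P_1,P_2,P_3$ (coefficient $1$), at $Q_1,Q_2,Q_3$ and the unramified finite points (coefficient $0$), and at the six points over $\infty$ shows that $r$ may have poles only at $z_1,\dots,z_4$, with maximal allowed orders $b_1=b_2=b_3$ equal to $0$ for $j\le 4$ and $1$ for $j=5$, and $b_4=\lfloor j/2\rfloor$, while $r$ must vanish to order at least $j$ at infinity. Counting polynomials of the admissible degree gives the eigenspace dimension $d_j=\max\{0,\,(b_1+b_2+b_3+b_4)-j+1\}$, which evaluates to $d_0=1$, $d_5=1$, and $d_1=d_2=d_3=d_4=0$. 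Hence $h^0(M,L)=\sum_{j=0}^5 d_j=2$, an even number, so $(M_6(1,1,1,3),\omega)\in\mathcal{H}^{even}(2,2,2)$; I would record the two explicit generators $f=1$ and $f=w^5/\big((z-z_1)(z-z_2)(z-z_3)(z-z_4)^2\big)$ as a check. The points where I would be most careful — and the main potential source of error — are justifying that the flat-geometric spin parity coincides with $h^0(L)\bmod 2$, and pinning down $\operatorname{div}(w)$ exactly at the ramification points and at infinity, since every eigenspace dimension depends on these local orders.
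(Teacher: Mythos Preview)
Your argument is correct and follows essentially the same strategy as the paper's: identify the theta characteristic as $L=\mathcal{O}(P_1+P_2+P_3)$, then compute $h^0(L)\bmod 2$ by decomposing into eigenspaces for the cyclic action $T$. The one difference is purely a choice of which model of $H^0(L)$ to use. The paper exploits the isomorphism $L\simeq K\otimes L^{-1}$ and instead computes the space of holomorphic $1$-forms vanishing at $P_1,P_2,P_3$: it writes down an explicit $T^\ast$-eigenbasis $\alpha(c_1,c_2),\beta,\gamma$ of $H^0(K)$ (with $\gamma$ a scalar multiple of $\omega$) and checks that exactly the two forms $\beta,\gamma$ vanish at each $P_i$. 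You work directly with meromorphic functions $w^{j}r(z)$ in $H^0(L)$ and tabulate eigenspace dimensions from divisor inequalities. Multiplication by $\omega$ carries your two generators to the paper's two $1$-forms, so the computations are Serre-dual to one another. Your bookkeeping via pole orders is a bit more mechanical and uniform across $j$, while the paper's approach has the side benefit of exhibiting the full basis of holomorphic $1$-forms on $M_6(1,1,1,3)$.
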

\begin{proof}
Let us consider  a  holomorphic  $1$-form $\omega$ with zeroes of even degrees
only, that is, with a pattern of zeroes of the form $(2d_1, \dots, 2d_n)$ and let
$K(\omega)=2d_1  P_1+\dots+2d_n  P_n$ be its zero divisor. An equivalent
definition of the \textit{parity of the spin structure} $\phi(\omega)$ associated  to
$\omega$ is the dimension of the space of holomorphic $1$-forms with zeroes
of  degrees  at  least $d_1,\dots, d_n$ at $P_1,\dots,P_n$ respectively, computed
modulo $2$, that is,
$$
\phi(\omega):=\dim\left|\frac{1}{2}K(\omega)\right|+1\pmod 2
$$
(see \cite{Atiyah, Johnson, Milnor, Mumford} for
more  information  on  the  spin-structure). Let us compute the above
dimension   for   the   holomorphic   $1$-form  $\omega$  defining  the
square-tiled flat structure $\omega^2=p^\ast q_0$ on $M_6(1,1,1,3)$.

We recall that $M_6(1,1,1,3)$ is defined by the equation
$$
w^6=(z-z_1)(z-z_2)(z-z_3)(z-z_4)^3\ .
$$
Let us consider the following $1$-forms on $M_6(1,1,1,3)$,
\begin{align}
\notag
\alpha(c_1,c_2)&=(c_1z+c_2)(z-z_4)^2\frac{dz}{w^5}\qquad c_1, c_2=const\\
\label{eq:basis:of:hol:1:forms}
\beta&=(z-z_4)\frac{dz}{w^4}\\
\notag
\gamma&=(z-z_4)\frac{dz}{w^3}
\end{align}
We  claim  that  all  these  forms  are  holomorphic  (in the case of
$\alpha(c_1,c_2)$,  it  is  holomorphic  for all values of parameters
$c_1,c_2\in\mathbb{C}$). For example, let  us check this for $\gamma$. When
$w\neq   0$   and   $z\neq\infty$,   the  form  $\gamma$  is  clearly
holomorphic.  In  a  neighborhood  of  any of the ramification points
$P_i$,  where  $i=1,2,3$,  we have $w^6\sim(z-z_i)$, so $dz\sim w^5\,
dw$,  and with respect to a local coordinate $w$ in a neighborhood of
$P_4$,  we  get  $\gamma\sim w^2\,dw$. This shows that $\gamma$ has a
double zero at each of the points $P_1,P_2,P_3$. In a neighborhood of
$z_4$ we have $w^2\sim(z-z_4)$, so with respect to a local coordinate
$w$      we     get     $dz=w\,dw$,     and     hence     $\gamma\sim
w^2\cfrac{w\,dw}{w^3}=dw$.  Hence,  each  of  the  three preimages of
$z_4$ on our Riemann surface is a regular point of $\gamma$. Finally,
choosing  a  local coordinate $t=1/z$ in a neighborhood of $z=\infty$
we  see  that  $(z-z_i)\sim  t^{-1}$, $dz\sim t^{-2}\,dt$, and $w\sim
t^{-1}$,  so  $\gamma\sim dt$. Hence, any preimage of $z=\infty$ is a
regular  point  of  $\gamma$.  In  a  similar  way  we can check that
$\alpha(c_1,c_2)$   and  $\beta$  are  holomorphic  (see \cite{Bouw},
\cite{Bouw:Moeller},  \cite{Eskin:Kontsevich:Zorich:cyclic}  for more
details). Clearly,
\begin{equation}
\label{eq:linear:combination}
\alpha(c_1,c_2)+c_3\beta+c_4\gamma
\end{equation}
is   identically   zero   if   and   only   if   $c_1=c_2=c_3=c_4=0$.
By formula \eqref{eq:genus},  we  know  that  the  genus of $M_6(1,1,1,3)$ is
equal  to  $4$.  Hence,  we  have constructed a basis of the space of
holomorphic  $1$-forms  on $M_6(1,1,1,3)$: every holomorphic $1$-form can
be represented as a linear combination \eqref{eq:linear:combination},
and this representation is unique.

Note  that the forms $\alpha,\beta,\gamma$ are eigenforms of the deck
transformation  $(z,w)\mapsto  (z,\zeta w)$, where $\zeta$ is a sixth
primitive  root  of unity, $\zeta^6=1$. The corresponding eigenvalues
are $\zeta, \zeta^2, \zeta^3$. In particular, since $\zeta^3=-1$, the
form  $\gamma$  is  anti-invariant with respect to a generator of the
group  of  deck  transformations.  Our  calculation  shows  that  the
eigenspace  corresponding  to the eigenvalue $-1$ is one-dimensional.
Hence, the form $\gamma$ differs from the holomorphic $1$-form $\omega$ defining
our   square-tiled   flat  structure  on  $M_6(1,1,1,3)$  only  by  a
(non-zero) multiplicative constant.

Assume that  $c_1,c_2$  are  not simultaneously equal to zero. When
the ratio $-c_2/c_1$ satisfies
$-c_2/c_1\notin\{z_1,z_2,z_3,z_4,\infty\}$,  the  holomorphic  $1$-form
$\alpha(c_1,c_2)$  has  six  simple  zeroes:  one  at each of the six
preimages  of  the root of the polynomial $(c_1z+c_2)$. When $c_1=0$,
the  holomorphic $1$-form $\alpha(c_1,c_2)$ also has six simple zeroes:
one  at each of the six preimages of $z=\infty$. When $-c_2/c_1=z_i$,
where  $i=1,2,3$,  the  form  $\alpha(c_1,c_2)$  has a single zero of
degree   $6$   at  $P_i$.  Finally,  when  $-c_2/c_1=z_4$,  the  form
$\alpha(c_1,c_2)$  has three zeroes of degree two: one at each of the
three preimages of $z=z_4$.

The  holomorphic  form  $\beta$ has $6$ simple zeroes: one at each of
the six ramification points $z=z_i$, where $i=1,2,3,4$.

Our       consideration       implies       that       a       linear
combination \eqref{eq:linear:combination} has a zero at \textit{each}
ramification  point $P_1,P_2,P_3$ if and only if $\alpha(c_1,c_2)$ is
not   present  in  our  linear  combination,  i.e.  if  and  only  if
$c_1=c_2=0$. This means that
$$
\phi(\omega)=\phi(\gamma)=
\dim\operatorname{Vect}(\beta,\gamma)=2\equiv 0\pmod 2
$$
and our Lemma is proved.
\end{proof}

%----------------------------------------------------------------
\section{An exercise in arithmetics}
\label{a:exercise}

In  this  appendix  we  present  an exercise promised in the proof of
Theorem \ref{thm:1prime}.  We show that if a square-tiled cyclic cover $M_N(a_1,a_2,a_3,a_4)$ has
a  symmetry represented by a single cycle of length $4$ or $2$, then
the index of the Veech group is different from $1$.

We  start  with  a symmetry represented by a cycle of length $4$. If
the  index  of  the  Veech  group  is  $1$, then for all permutations
$\pi\in\mathfrak{S}_4$  the flat surfaces represented by square-tiled
cyclic  covers $M_N(a_{\pi(1)},a_{\pi(2)},a_{\pi(3)},a_{\pi(4)})$ are
isomorphic.  By  Lemma~\ref{lm:cylinder:width}  this implies that for
any  $i\neq  j$  and any $m\neq l$, where $i,j,m,l\in\{1,2,3,4\}$ one
has
\begin{equation}
\label{eq:gcdij}
\gcd(N,a_i+a_j)=\gcd(N,a_m+a_l)\,.
\end{equation}
Since  the  symmetry  group is the entire $\mathfrak{S}_4$, we may assume
that  the ramification points are numbered in such way that the cycle
$\pi$ acts as
$$
a_1\to a_2\to a_3\to a_4\to a_1\,,
$$
that is
$$
a_i=a_1\cdot     k^{i-1}\pmod{N}, \text{ where }  i=1,2,3,4\,.
$$
Conditions \eqref{eq:a1:a4}   imply   that   $\gcd(a_1,N)=1$.  Hence,
\begin{align}
\notag
\gcd(a_1+a_2,N)&=\gcd(a_1\cdot(1+k),N)=\gcd(1+k,N)\\
\label{eq:three:gcd}
\gcd(a_1+a_3,N)&=\gcd(a_1\cdot(1+k^2),N)=\gcd(1+k^2,N)\\
\notag
\gcd(a_2+a_3,N)&=\gcd(a_1\cdot(k+k^2),N)=\gcd(k+k^2,N)\,.
\end{align}
By \eqref{eq:gcdij} we have
$$
\gcd(a_1+a_2,N)=\gcd(a_1+a_3,N)=\gcd(a_2+a_3,N):=d\,.
$$
Clearly
$$
\gcd((a_2+a_3)-(a_1+a_3)),N)=
\gcd(a_1\cdot(k-1),N)=
\gcd(k-1,N)
$$
is   divisible   by  $d$.  Since  $\gcd(k+1,N)=d$  we  conclude  that
$2=(k+1)-(k-1)$ is divisible by $d$, and, hence, $d$ is either $1$ or
$2$.

By \eqref{eq:a1:a4}  $a_1+a_2+a_3+a_4$  is  divisible  by $N$. On the
other hand,
\begin{multline*}
\gcd(a_1+a_2+a_3+a_4,N)=
\gcd\big((a_1(1+k)(1+k^2),N\big)
=\\=
\gcd\big((1+k)(1+k^2),N\big)
\end{multline*}
divides the product $\gcd(1+k,N)\cdot\gcd(1+k^2,N)$ which is equal to
$d^2$  by \eqref{eq:three:gcd}.  Hence, $N$  is  one  of the integers
$1,2,4$,  which  contradicts  the  assumption that all $a_i, a_j$ are
pairwise distinct.

Let us suppose now that  $M_N(a_1,\dots,a_4)$  has  a symmetry represented by a cycle of
length $2$  and  let  us show that the Veech group cannot have index
$1$.  Since  the  symmetry  group is the entire $\mathfrak{S}_4$, without
loss of generality we may assume that
\begin{align*}
k\cdot a_1\pmod{N}&=a_1\\
k\cdot a_2\pmod{N}&=a_2\\
k\cdot a_3\pmod{N}&=a_4\\
k\cdot a_4\pmod{N}&=a_3
\end{align*}

Let   $\ell:=N/\gcd(k-1,N)$.  Since  $(k-1)\cdot  a_1\pmod{N}=0$  and
$a_1\le  N$,  $1<k<N$, we  have  $\ell>1$,  and $\ell$ divides $a_1$.
Similarly,  $\ell$  divides $a_2$. Hence $\ell$ divides $a_1+a_2$ and
thus  $\gcd(a_1+a_2,N)$.  We  have  seen  that, when the index of the
Veech   group   is $1$,  the relations in formula \eqref{eq:gcdij}  hold,
in particular
$$
\gcd(a_1+a_2,N)=\gcd(a_1+a_3,N)\,.
$$
Since  $\ell$  is  a  divisor of both $\gcd(a_1+a_3,N)$ and $a_1$, it
divides  $a_3$, hence it also divides $a_4=k\cdot a_3\pmod{N}$.
Thus,  $\ell$ divides $\gcd(N,a_1,a_2,a_3,a_4)$. Since $\ell>1$, this
contradicts the second condition in formula \eqref{eq:a1:a4}.

%----------------------------------------------------------------
%\subsection*{Acknowledgments}
\acknowledgements

We  would  like  to thank J.-C. Yoccoz, who has motivated our work by
asking   whether   the   example   in \cite{ForniSurvey}   could   be
generalized.  We  are  also  very grateful to M. M\"oller who told us
about  cyclic  covers,  directed  us to the reference \cite{Bouw} and
suggested  the  correct  algebraic equation of one of our examples.
A. Zorich also thanks A. Eskin and \mbox{M. Kontsevich} for the pleasure
to  work  with  them on the project \cite{Eskin:Kontsevich:Zorich} on
\mbox{Lyapunov}  exponents;  in  particular,  on  its part related to
evaluation  of  individual  \mbox{Lyapunov} exponents of square-tiled
cyclic covers \cite{Eskin:Kontsevich:Zorich:cyclic}.

We thank an anonymous referee, V. Delecroix, P. Hubert,
and A. Wright for careful reading the manuscript  and  for  indicating  to
us some typos, and a gap in the initial version of Theorem \ref{th:SLZ:orbit}.

The authors thank Coll\`ege de France, IHES, HIM and MPIM for their
hospitality while preparation of this paper.

\end{document}